\theoremstyle{plain}
\newtheorem{theorem}{Theorem}[section]
\newtheorem{remark}[theorem]{Remark}
\newtheorem{proposition}[theorem]{Proposition}
\newtheorem{corollary}[theorem]{Corollary}
\numberwithin{equation}{section}
\theoremstyle{definition}
\theoremstyle{remark}
\newcommand{\bS}{{\mathbf S}}
\newcommand{\be}{{\mathbf e}}
\newcommand{\bQ}{{\mathbf Q}}
\newcommand{\bU}{{\mathbf U}}
\newcommand{\cD}{{\mathcal D}}
\newcommand{\cF}{{\mathcal F}}
\newcommand{\cH}{{\mathcal H}}
\newcommand{\cL}{{\mathcal L}}
\newcommand{\cM}{{\mathcal M}}
\newcommand{\cR}{{\mathcal R}}
\newcommand{\cK}{{\mathcal K}}
\newcommand{\cS}{{\mathcal S}}
\newcommand{\cE}{{\mathcal E}}
\newcommand{\cU}{{\mathcal U}}
\newcommand{\cX}{{\mathcal X}}
\newcommand{\cY}{{\mathcal Y}}
\newcommand{\cO}{{\mathcal O}}
\newcommand{\cG}{{\mathcal G}}
\newcommand{\D}{{\mathbb D}}
\begin{document}

\title[de Branges-Rovnyak spaces]{de Branges-Rovnyak spaces: basics and theory}
\author[J. A. Ball]{Joseph A. Ball}
\address{Department of Mathematics,
Virginia Tech,
Blacksburg, VA 24061-0123, USA}
\email{joball@math.vt.edu}
\author[V. Bolotnikov]{Vladimir Bolotnikov}
\address{Department of Mathematics,
The College of William and Mary,
Williamsburg VA 23187-8795, USA}
\email{vladi@math.wm.edu}

\begin{abstract}
For $S$ a contractive analytic operator-valued function on the 
unit disk ${\mathbb D}$, de Branges and Rovnyak associate a  
Hilbert space of analytic functions $\cH(S)$ and related 
extension space $\cD(S)$ consisting of pairs of analytic functions 
on the unit disk ${\mathbb D}$. This survey describes three 
equivalent formulations (the original geometric de Branges-Rovnyak 
definition, the Toeplitz operator characterization, and the 
characterization as a reproducing kernel Hilbert space) of the de 
Branges-Rovnyak space $\cH(S)$, as well as its role as the underlying Hilbert 
space for the modeling of completely non-isometric Hilbert-space contraction 
operators.  Also examined is the extension of these ideas to handle 
the modeling of the more general class of completely nonunitary 
contraction operators, where the more general two-component de 
Branges-Rovnyak model space $\cD(S)$ and associated overlapping 
spaces play key roles. Connections with other function theory 
problems and applications are also discussed.  More recent 
applications to a variety of subsequent applications are given in a 
companion survey article.
\end{abstract}

\subjclass{47A57}
\keywords{de Branges-Rovnyak spaces, lifted norm, Brangesian 
complementary space, reproducing kernel Hilbert space}

\maketitle
\tableofcontents

\section{Introduction}
 In the late 1960s and early 1970s, 
Louis de Branges and James Rovnyak introduced and studied spaces of 
vector-valued holomorphic functions on the open unit disk $\D$ associated with 
what is now called a Schur-class function $S \in \cS(\cU, \cY)$ 
(i.e., a holomorphic function $S$ on the unit disk with values equal 
to contraction operators between Hilbert coefficient spaces $\cU$ and 
$\cY$---although in the original work of de Branges and Rovnyak the 
choice $\cU = \cY$ was usually taken).  These spaces were related to but distinct from 
the Hilbert spaces of entire functions explored in earlier work of de 
Branges (see in particular the book \cite{dB68}); these latter spaces 
in turn have been revived recently, especially in the work of H.~Dym 
and associates (see \cite{DMcK76, ArovDym1, ArovDym2}) as well as 
others and have deep 
connections with the work of M.G.~Kre\u{\i}n and assorted applied 
problems (e.g., continuous analogs of orthogonal polynomials and 
associated moment problems, inverse string problems).  These spaces 
also serve as model spaces for unbounded densely defined symmetric 
operators with equal deficiency indices. As other authors will be 
discussing these spaces in other chapters of this series, our focus here 
will be on the de Branges-Rovnyak 
spaces on the unit disk. Motivation for the study of these spaces 
seems to be from at least two sources:  
\begin{enumerate}
\item quantum scattering theory
(see \cite{dBR1} as well as the papers \cite{dB77, dBS}), and 
\item operator model 
theory for Hilbert space contraction operators and the invariant 
subspace problem (see 
\cite[Appendix]{dBR1} and \cite{dBR2}).
\end{enumerate}
The connection with quantum scattering had to do with using 
the machinery of Hilbert spaces of analytic functions (in particular, 
an object called overlapping spaces) to set up a formalism for the 
study of the perturbation theory for self-adjoint operators (or equivalently after 
Cayley transformation, to the perturbation theory of unitary 
operators), an important topic in the wave-operator approach to 
scattering theory.  This article does not go into this topic, but rather 
focuses on the second application, namely, to operator model theory.

There are now at least three 
distinct ways of introducing the de Branges-Rovnyak spaces: 
\begin{enumerate}
\item  the 
original definition of de Branges and Rovnyak (as the complementary 
space of $S \cdot H^{2}$), 
\item  as the range of the Toeplitz defect 
operator with lifted norm, or 
\item  as the reproducing kernel Hilbert 
space with reproducing kernel given by the de Branges-Rovnyak 
positive kernel.  
\end{enumerate}
In the next three sections, each of these will be discussed in turn. 

\section{The original de Branges-Rovnyak formulation}

In what follows, the symbol
$\cL(\cU,\cY)$ stands for the space of bounded linear operators mapping a Hilbert
space $\cU$ into a Hilbert space $\cY$, abbreviated to $\cL(\cY)$ in case $\cU=\cY$.
The standard  Hardy space of $\cY$-valued functions on
the open unit disk $\D$ with square-summable sequences of Taylor 
coefficient  is denoted by $H^2(\cY)$ and the notation $\cS(\cU,\cY)$ 
is used for the Schur class of functions analytic on $\D$
whose values are contractive operators in $\cL(\cU,\cY)$.

\smallskip 

Let $S\in\cS(\cU,\cY)$ be a Schur-class function. L. de Branges and J. Rovnyak 
define the space $\cH(S)$ according to the prescription
\begin{equation}
\cH(S) =\{ f \in H^{2}(\cU) \colon \| f\|^{2}_{\cH(S)}: = \sup_{g \in
H^{2}(\cU)}\{
\|f + Sg \|^{2}_{H^{2}(\cY)}  - \| g \|^{2}_{H^{2}(\cU)}\} < \infty\}.\label{def1HS}
\end{equation}
At first glance the definition looks rather impenetrable, except for 
one easy special case, namely, the case where $S$ is inner.  In this 
case, it is relatively straightforward to see that $\cH(S)$ is 
isometrically equal to $H^{2}(\cY) \ominus S \cdot H^{2}(\cU)$. 
Nevertheless, it is possible to show directly from the definition 
\eqref{def1HS} (see \cite{dBR1, dBR2}) the following basic facts listed 
in Theorem \ref{T:H(S)}; the notion of {\em reproducing kernel 
Hilbert space} entering in the first fact is reviewed in Section 
\ref{S:RKHS} below. The proofs of the various pieces of the following 
result are given also in Section \ref{S:RKHS}.

\begin{theorem}  \label{T:H(S)}
    If $S \in \cS(\cU, \cY)$, the space $\cH(S)$ has the following 
    properties:
\begin{enumerate}
    \item  $\cH(S)$ is a linear 
space, indeed a reproducing kernel Hilbert space with reproducing 
kernel $K_{S}(z,w)$ given by
$$
  K_{S}(z,w) = \frac{I - S(z) S(w)^{*}}{1 - z \overline{w}}.
$$
\item The space $\cH(S)$ is invariant under the backward-shift operator
\begin{equation}  \label{RS}
  R_{0} \colon f(z) \mapsto [f(z) - f(0)]/z
\end{equation}
and the following norm estimate holds:
\begin{equation}   \label{norm-est}
    \| R_{0} f\|^{2}_{\cH(S)} \le \| f \|^{2}_{\cH(S)} - \| f(0) 
    \|^{2}_{\cY}.
\end{equation}
Moreover, equality holds in \eqref{norm-est} for all $f \in \cH(S)$ 
if and only if $\cH(S)$ has the property
$$
 S(z) \cdot u \in \cH(S) \Rightarrow S(z) \cdot u \equiv 0.
$$
\item
For any $u \in \cU$, the function $R_0(Su)$
is in $\cH(S)$.  If one lets $\tau \colon \cU \to \cH(S)$ 
denote the operator
\begin{equation}  \label{tau}
  \tau \colon u \mapsto R_0(Su)=\frac{S(z) - S(0)}{z} \,  u, 
\end{equation}
then the adjoint $R_{0}^{*}$ of the operator $R_{0}$ \eqref{RS} on $\cH(S)$ is given by
\begin{equation}   \label{RS*}
  R_{0}^{*} \colon f(z) \mapsto z f(z) - S(z) \cdot \tau^{*}(f)
\end{equation}
with the following formula for the norm holding:
\begin{equation}   \label{RS*norm}
    \| R_{0}^{*} f\|^{2}_{\cH(S)} = \| f \|^{2}_{\cH(S)} - \| 
    \tau^{*}(f) \|^{2}_{\cU}.
\end{equation}

\item
Let  $\bU_{S}$ be the colligation matrix given by
\begin{equation}   \label{bUS}
 \bU_{S} = \begin{bmatrix}  A_{S} & B_{S} \\ C_{S} & D_{S} 
\end{bmatrix}: = \begin{bmatrix} 
     R_{0} & \tau \\ \be(0) & S(0) \end{bmatrix}  \colon 
     \begin{bmatrix} \cH(S) \\ \cU \end{bmatrix} \to \begin{bmatrix} 
	 \cH(S) \\ \cY \end{bmatrix}
\end{equation}
where $R_{0}$ and $\tau$ are given by \eqref{RS} and \eqref{tau} and 
where $\be(0) \colon \cH(S) \to \cY$ is the evaluation-at-zero map:
$$
    \be(0) \colon f(z) \mapsto f(0).
$$
Then $\bU_{S}$ is coisometric, and one recovers $S(z)$ as the characteristic 
function of $\bU_{S}$:
\begin{equation}   \label{realization}
  S(z) = D_{S} + z C_{S} (I - zA_{S})^{-1} B_{S}.
\end{equation}

\item  The operator $T$ on a Hilbert space $\cX$ is unitarily 
equivalent to an operator of the form $R_{0}$ on a model space 
$\cH_{S}$ for a contractive operator-valued function $S$ on ${\mathbb D}$ if and 
only if $T$ is a completely non-isometric contraction, i.e., 
$$
\|T \| \le 1\quad\mbox{and}\quad \bigcap_{n \ge 0} \{ x \colon \| T^{n} x \| = \|x \| \} = 
\{0\}.
$$
\end{enumerate}
\end{theorem}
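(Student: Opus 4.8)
The plan is to treat the two implications in turn, freely using parts (1)--(4) of the theorem.

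\textbf{$R_{0}$ on $\cH(S)$ is a completely non-isometric contraction.} Suppose $T$ acts as $R_{0}$ on some $\cH(S)$. The estimate \eqref{norm-est} gives $\|R_{0}f\|_{\cH(S)}^{2}\le\|f\|_{\cH(S)}^{2}-\|f(0)\|_{\cY}^{2}\le\|f\|_{\cH(S)}^{2}$, so $R_{0}$ is a contraction and $n\mapsto\|R_{0}^{n}f\|_{\cH(S)}$ is nonincreasing. If $\|R_{0}^{n}f\|_{\cH(S)}=\|f\|_{\cH(S)}$ for all $n\ge0$, then all intermediate norms coincide, so applying \eqref{norm-est} to $R_{0}^{k}f$ forces $(R_{0}^{k}f)(0)=0$ for every $k\ge0$; since $\cH(S)\subseteq H^{2}(\cU)$ as a set (take $g=0$ in \eqref{def1HS}) and $(R_{0}^{k}f)(0)$ is the $k$-th Taylor coefficient of $f$, we conclude $f\equiv0$. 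Hence $\bigcap_{n\ge0}\{f\colon\|R_{0}^{n}f\|=\|f\|\}=\{0\}$, i.e.\ $R_{0}$ is completely non-isometric.

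\textbf{Every completely non-isometric contraction has this form.} Let $T$ be a completely non-isometric contraction on $\cX$, put $D_{T}=(I-T^{*}T)^{1/2}$, $D_{T^{*}}=(I-TT^{*})^{1/2}$, and let $\cD_{T}=\overline{D_{T}\cX}$, $\cD_{T^{*}}=\overline{D_{T^{*}}\cX}$. Using $TD_{T}=D_{T^{*}}T$ and $T^{*}D_{T^{*}}=D_{T}T^{*}$ (whence $T\cD_{T}\subseteq\cD_{T^{*}}$ and $T^{*}\cD_{T^{*}}\subseteq\cD_{T}$) one verifies that
\[
\bU=\begin{bmatrix} T & D_{T^{*}} \\ D_{T} & -T^{*}|_{\cD_{T^{*}}} \end{bmatrix}\colon\begin{bmatrix}\cX\\\cD_{T^{*}}\end{bmatrix}\to\begin{bmatrix}\cX\\\cD_{T}\end{bmatrix}
\]
is coisometric ($\bU\bU^{*}=I$). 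Let $S(z)=-T^{*}|_{\cD_{T^{*}}}+zD_{T}(I-zT)^{-1}D_{T^{*}}\in\cS(\cD_{T^{*}},\cD_{T})$ be its characteristic function (Schur-class because $\bU$ is a contraction), and let $W\colon\cX\to H^{2}(\cD_{T})$ be the observability operator $Wx(z)=D_{T}(I-zT)^{-1}x=\sum_{n\ge0}(D_{T}T^{n}x)z^{n}$. A generating-function computation gives $WT=R_{0}W$. Also $\|Wx\|_{H^{2}}^{2}=\sum_{n\ge0}\big(\|T^{n}x\|^{2}-\|T^{n+1}x\|^{2}\big)=\|x\|^{2}-\lim_{n\to\infty}\|T^{n}x\|^{2}$, so $Wx=0$ iff $\|T^{n}x\|=\|x\|$ for all $n$; thus $\ker W=\bigcap_{n\ge0}\{x\colon\|T^{n}x\|=\|x\|\}=\{0\}$ by the completely non-isometric hypothesis.

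It remains to show $W$ maps $\cX$ \emph{onto} $\cH(S)$ isometrically for the $\cH(S)$-norm; together with the injectivity just shown, this exhibits $W$ as a unitary equivalence between $T$ and $R_{0}$ on $\cH(S)$. I would do this through the reproducing-kernel description of part (1): the coisometry relations for $\bU$ yield the identity
\[
D_{T}(I-zT)^{-1}(I-\overline{w}T^{*})^{-1}D_{T}=\frac{I-S(z)S(w)^{*}}{1-z\overline{w}}=K_{S}(z,w),
\]
which says $W\big((I-\overline{w}T^{*})^{-1}D_{T}v\big)=K_{S}(\cdot,w)v\in\cH(S)$ and that $W$ is isometric on the closed span of the vectors $(I-\overline{w}T^{*})^{-1}D_{T}v$; since that span is $(\ker W)^{\perp}$ and its image is dense in $\cH(S)$, $W$ is a coisometry of $\cX$ onto $\cH(S)$, hence (being injective) unitary. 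Equivalently, one may invoke the state-space uniqueness theorem for observable coisometric colligations: $\bU$ and the colligation $\bU_{S}$ of part (4) are observable coisometric realizations of the same $S$ (observability of $\bU_{S}$ because $f\mapsto(f(0),(R_{0}f)(0),\dots)$ recovers $f$), so their state operators $T$ on $\cX$ and $R_{0}$ on $\cH(S)$ are unitarily equivalent. I expect this last point --- identifying the intrinsic $\cH(S)$-norm of $Wx$, i.e.\ passing from ``$\bU$ realizes $S$ coisometrically'' to ``$W$ is a coisometry onto $\cH(S)$'' --- to be the main technical obstacle; the remaining steps are formal.
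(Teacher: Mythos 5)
Your proposal addresses only part (5) of the theorem, taking parts (1)--(4) as given. Relative to the full statement that is a substantial gap: the paper's proof of parts (1)--(4) (and of the equality criterion in \eqref{norm-est}, which involves the maximal factorable minorant of $I-S^*S$) occupies most of Sections 3.3--3.4, built on the lifted-norm/pullback formalism, Douglas's factorization lemma, and either the Toeplitz-operator identities or the lurking isometry argument. None of that is reproduced or replaced in your write-up, so as a proof of the stated theorem it is incomplete; as a proof of part (5) alone it is essentially sound.

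For part (5) itself, your two directions track the paper's Theorem \ref{T:cni} and Theorem \ref{T:H(S)5} closely. The forward direction (iterating \eqref{norm-est} to kill all Taylor coefficients) is verbatim the paper's argument; note only that $\cH(S)\subseteq H^{2}(\cY)$, not $H^{2}(\cU)$. In the converse you make one genuine variation: you complete the isometric output pair $(D_T,T)$ to a coisometric colligation by writing down the Julia operator $\sbm{T & D_{T^*} \\ D_T & -T^*}$ explicitly, whereas the paper solves the Cholesky completion problem abstractly; your choice is more concrete and simultaneously identifies $S$ as the Sz.-Nagy--Foias characteristic function of $T^*$, at the cost of having to verify the intertwinings $TD_T=D_{T^*}T$ by hand. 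The step you flag as the main obstacle --- passing from the kernel identity $D_T(I-zT)^{-1}(I-\overline{w}T^*)^{-1}D_T=K_S(z,w)$ to ``$W$ is unitary from $\cX$ onto $\cH(S)$'' --- does go through as you sketch it: the identity shows $W$ is isometric (for the $\cH(S)$-norm) on the span of the vectors $(I-\overline{w}T^*)^{-1}D_Tv$, whose closure is $(\operatorname{Ker} W)^{\perp}=\cX$, and the resulting isometric extension coincides with $W$ because both are bounded into $H^{2}(\cD_T)$ and $\cH(S)$ is contractively included there; surjectivity follows since the range contains the kernel functions. This is exactly how the paper identifies $\operatorname{Ran}\cO_{C,A}$ with the pullback norm as the RKHS with kernel $K_{C,A}=K_S$, so apart from the choice of completion your route and the paper's are the same.
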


In addition, there is an extended space $\cD(S)$ constructed as 
follows (see \cite{dBR1, dB70}).  One defines $\cD(S)$ as the space 
of all pairs of functions (here written as columns) $\left[ 
\begin{smallmatrix} f(z) \\ g(z) \end{smallmatrix} \right]$ with 
    $f\in H^{2}(\cY)$ and $g(z) = 
    \sum_{n=0}^{\infty} a_{n} z^{n}\in H^{2}(\cU)$) such that the sequence of 
    numbers
\begin{equation}   \label{Nn}
    N_{n}: = \| z^{n} f(z) - S(z) (a_{0} z^{n-1} + \cdots + a_{n-1}) 
    \|^{2}_{\cH(S)} + \| a_{0}\|^{2} + \cdots + \| a_{n-1}\|^{2} 
\end{equation}
is uniformly bounded.  It can be shown that the sequence 
$\{N_{n}\}_{n\ge 0}$ is in fact nonincreasing so the limit 
${\displaystyle\lim_{n \to \infty} N_{n}}$ exists.  This limit is then defined to be 
the $\cD(S)$-norm of $\left[ \begin{smallmatrix} f(z) \\ g(z) 
\end{smallmatrix} \right]$:
$$
  \left\| \begin{bmatrix} f \\ g \end{bmatrix} 
  \right \|^{2}_{\cD(S)} = \lim_{n \to \infty} N_{n}\quad \text{where $N_{n}$ is as in 
  \eqref{Nn}.}
$$
In particular, if $\left[ \begin{smallmatrix} f \\ g 
\end{smallmatrix} \right] \in \cD(S)$, then necessarily 
$$ 
z^{n} f(z) - S(z) (a_{0} z^{n-1} + \cdots + a_{n-1}) \in \cH(S)
$$ 
for each  $n=0,1,2,\dots$.  The special choice $n=0$ implies that $f \in 
\cH(S)$. The formula \eqref{RS*} for $R_{0}^{*}$  combined with the 
notation $a_{i} = \tau^{*}((R_{0}^{*})^{i+1} f)$ gives rise to the 
formula
$$
z \mapsto z^{n}f(z)  - S(z)(a_{0} z^{n-1} + \cdots + a_{n-1}) = 
(R_{0}^{*})^{n}(f) \in  \cH(S)
$$
for the action of $(R_{0}^{*})^{n}$.
Moreover, the norm identity \eqref{RS*norm} implies that 
$$
\| z^{n}f(z)  - S(z)(a_{0} z^{n-1} + \cdots + 
a_{n-1})\|^{2}_{\cH(S)} + \| a_{n-1}\|^{2} + \cdots + \| a_{0}\|^{2} 
= \| f(z) \|^{2}_{\cH(S)} 
$$
for all $n\ge 0$.  
For this special choice of $g$, namely 
$$
g(z) = \widetilde f(z): = \sum_{n=0}^{\infty} a_{n} z^{n}\quad \text{with}\quad a_{n} 
= \tau^{*}((R_{0}^{*})^{n+1} f),
$$
it follows that $\left[ \begin{smallmatrix} f \\ \widetilde f \end{smallmatrix} \right]
\in \cD(S)$ with $\left\| \left[ \begin{smallmatrix} f \\ \widetilde f \end{smallmatrix} 
\right] 
    \right\|_{\cD(S)}=  \| f \|_{\cH(S)}$.  Thus $f \mapsto \left[ 
    \begin{smallmatrix} f \\ \widetilde f \end{smallmatrix} \right]$ 
	is an isometric embedding of $\cH(S)$ into $\cD(S)$.
	
\smallskip

The following theorem gives the properties of $\cD(S)$ analogous 
to those listed in Theorem \ref{T:H(S)} for $\cH(S)$.  The proofs of 
these results are given in Section \ref{S:D(S)RKHS} below.

\begin{theorem}   \label{T:D(S)}
  Suppose that $S \in \cS(\cU, \cY)$ and the space $\cD(S)$ is 
  defined as above.  Then:
 \begin{enumerate}  
     \item[(1)] $\cD(S)$ is a linear space, indeed a reproducing 
     kernel Hilbert space with reproducing kernel $\widehat 
     K_{S}(z,w)$ given by
  \begin{equation}  \label{DS-ker}
  \widehat K_{S}(z,w) = \begin{bmatrix} K_{S}(z,w) & \frac{S(z) - 
  S(\overline{w})}{z - \overline{w}}  \\
 \frac{\widetilde S(z) - \widetilde S(\overline{w})}{z - \overline{w}} &
  K_{\widetilde S}(z,w) \end{bmatrix}\quad\mbox{where}\quad \widetilde S(z):= 
S(\overline{z})^{*}.
  \end{equation}
  \item[(2)\&(3)] The space $\cD(S)$ is invariant under the transformation 
  $\widehat R_{0}$ given by
  \begin{equation}  \label{hatRS}
  \widehat R_{0} \colon \begin{bmatrix} f(z) \\ g(z) \end{bmatrix} 
  \mapsto \begin{bmatrix} [ f(z) - f(0) ]/z \\ z g(z) - \widetilde 
  S(z) f(0) \end{bmatrix}
  \end{equation}
  with adjoint given by
  \begin{equation}   \label{hatRS*}
  (\widehat R_{0})^{*} \colon \begin{bmatrix} f(z) \\ g(z) 
\end{bmatrix} \mapsto \begin{bmatrix} z f(z) - S(z) g(0) \\
[g(z) - g(0)]/z \end{bmatrix}.
\end{equation}
Moreover, the following norm identities hold:
\begin{align}
    \left\| \widehat R_{0}\left( \begin{bmatrix} f \\ g 
\end{bmatrix} \right)  \right\|^{2} & =  \left\| \begin{bmatrix} f \\ g 
\end{bmatrix} \right\|^{2}_{\cD(S)} - \| f(0) \|^{2}_{\cY}, \notag \\
\left\| (\widehat R_{0})^{*} \left( \begin{bmatrix} f \\ g 
\end{bmatrix} \right)  \right\|^{2} &  =  \left\| \begin{bmatrix} f \\ g 
\end{bmatrix} \right\|^{2}_{\cD(S)} - \| g(0) \|^{2}_{\cU}.
\label{normid's}
\end{align}

\item[(4)] Let $\widehat \bU_{S}$ be the colligation matrix given by
$$
\widehat \bU_{S} = \begin{bmatrix} \widehat A_{S} & \widehat B_{S} \\ 
\widehat C_{S} & \widehat D_{S} \end{bmatrix} \colon \begin{bmatrix} 
\cD(S) \\ \cU \end{bmatrix} \to \begin{bmatrix} \cD(S) \\ \cY 
\end{bmatrix}
$$
where 
\begin{align}
  & \widehat A_{S}: = \widehat R_{0}\vert_{\cD(S)}, 
  \quad 
   \widehat B_{S} \colon u \mapsto \begin{bmatrix} \frac{S(z) - 
  S(0)}{z} u \\ K_{\widetilde S}(z,w) u \end{bmatrix}, \notag \\
  & \widehat C_{S} \colon \begin{bmatrix} f(z) \\ g(z) \end{bmatrix} 
  \mapsto f(0), \quad \widehat D_{S} = S(0).
  \label{hatbU}
\end{align}
Then $\widehat \bU_{S}$ is unitary, and $S$ is recovered as the 
characteristic function of $\widehat \bU_{S}$:
\begin{equation}   \label{realunitary}
  S(z) = \widehat D_{S} + z \widehat C_{S} (I - z \widehat 
  A_{S})^{-1} \widehat B_{S}.
\end{equation}

\item[(5)]  The operator $T$ on a Hilbert space $\cX$ is unitarily 
equivalent to an operator of the form $\widehat R_{0}$ on a model space 
$\cH_{S}$ for 
a contractive operator-valued function $S$ on ${\mathbb D}$ if and 
only if $T$ is a completely non-unitary contraction, i.e., $\|T 
\| \le 1$ and 
$$\left( \bigcap_{n \ge 0} \{ x \colon \| T^{*n} x \| = \|x \| \} 
\right)  \bigcap \left( \bigcap_{n \ge 0} \{ x \colon \| T^{n} x \| = \| 
x \| \} \right) = \{0\}.
$$
\end{enumerate}
    
\end{theorem}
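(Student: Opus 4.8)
\emph{Plan of attack.} The plan is to prove the statements in the order: the second identity in \eqref{normid's}, then (1), then the first identity in \eqref{normid's}, then the adjoint formula \eqref{hatRS*}, then (4), then (5) --- each step parallel to its counterpart for $\cH(S)$. The two recurring tools will be, first, the recursion
\[
u_{n+1}=z\,u_n-S(z)\,a_n,\qquad u_n:=z^nf(z)-S(z)\bigl(a_0z^{n-1}+\cdots+a_{n-1}\bigr),
\]
governing (for $\sbm{f\\ g}$ with $g(z)=\sum_k a_k z^k$) the functions $u_n$ defining $\|\,\cdot\,\|_{\cD(S)}$ through the sequence $N_n$ of \eqref{Nn}; and second, the symmetry $S\leftrightarrow\widetilde S$, implemented by the coordinate flip $\sbm{f\\ g}\mapsto\sbm{g\\ f}$, which one reads off from \eqref{hatRS}--\eqref{hatRS*} and \eqref{DS-ker} to intertwine $\widehat R_0$ on $\cD(S)$ with $(\widehat R_0)^{*}$ on $\cD(\widetilde S)$ and to be a unitary identification $\cD(S)\to\cD(\widetilde S)$ once (1) is in hand. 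Throughout, I would quote Theorem~\ref{T:H(S)} freely, in particular the identity $\|R_0^{*}\varphi\|_{\cH(S)}^{2}=\|\varphi\|_{\cH(S)}^{2}-\|\tau^{*}\varphi\|_{\cU}^{2}$.

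\emph{The norm identities and item (1).} For $h=\sbm{f\\ g}\in\cD(S)$, a direct computation shows that $h':=(\widehat R_0)^{*}h=\sbm{zf-S(z)g(0)\\ (g-g(0))/z}$ satisfies $u_n(h')=u_{n+1}(h)$, with the Taylor coefficients of the lower component of $h'$ being $a_1,a_2,\dots$; hence $N_n(h')=N_{n+1}(h)-\|g(0)\|_{\cU}^{2}$, so $\cD(S)$ is $(\widehat R_0)^{*}$-invariant and, on letting $n\to\infty$, the second identity in \eqref{normid's} follows. For (1), I would start from any unitary colligation realization $S(z)=D+zC(I-zA)^{-1}B$ of the Schur function $S$; a resolvent computation --- exploiting that $\bU$ is coisometric and that $\widetilde S$ is the characteristic function of the coisometric colligation $\bU^{*}$ --- yields
\[
\widehat K_S(z,w)=\widehat W(z)\widehat W(w)^{*},\qquad \widehat W(z)=\bbm{C(I-zA)^{-1}\\ B^{*}(I-zA^{*})^{-1}},
\]
so $\widehat K_S\succeq 0$ and the reproducing kernel Hilbert space $\cH(\widehat K_S)$ is defined. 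One then identifies $\cH(\widehat K_S)$ with $\cD(S)$ isometrically: a pair $\sbm{f\\ g}$ lies in $\cH(\widehat K_S)$ precisely when $\sup_n N_n<\infty$, and in that case $\|\sbm{f\\ g}\|_{\cH(\widehat K_S)}^{2}=\lim_n N_n$; here the two diagonal blocks of $\widehat K_S$ are matched with the images of the isometric embeddings $\cH(S)\hookrightarrow\cD(S)$ and $\cH(\widetilde S)\hookrightarrow\cD(S)$, and \eqref{RS*norm} is used to recover the $\cH(S)$-norm of each $u_n$. This gives (1); the coordinate flip is now a unitary $\cD(S)\to\cD(\widetilde S)$, so transporting the (already proved) second identity in \eqref{normid's} over $\widetilde S$ back along the flip delivers the first identity. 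Finally, \eqref{hatRS*} would be confirmed by checking $\langle\widehat R_0 h,\widehat K_S(\cdot,w)\sbm{y\\ u}\rangle=\langle h,(\widehat R_0)^{*}\widehat K_S(\cdot,w)\sbm{y\\ u}\rangle$ on kernel functions via the reproducing property.

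\emph{Item (4).} With \eqref{normid's} in hand, this follows as for Theorem~\ref{T:H(S)}(4). Using that $\widehat C_Sh=f(0)$ (by definition) and $\widehat B_S^{*}h=g(0)$ (a short computation with $\widehat K_S$), the two identities in \eqref{normid's} become precisely $\widehat A_S^{*}\widehat A_S+\widehat C_S^{*}\widehat C_S=I$ and $\widehat A_S\widehat A_S^{*}+\widehat B_S\widehat B_S^{*}=I$; the off-diagonal relations in $\widehat\bU_S^{*}\widehat\bU_S=I=\widehat\bU_S\widehat\bU_S^{*}$ follow from the explicit forms of $\widehat B_S,\widehat C_S,\widehat D_S=S(0)$, so $\widehat\bU_S$ is unitary. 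The realization \eqref{realunitary} is the power-series identity $z\widehat C_S(I-z\widehat A_S)^{-1}\widehat B_Su=S(z)u-S(0)u$, valid because $\widehat C_S\widehat A_S^{\,n}\widehat B_Su$ is the value at $0$ of the first component of $\widehat R_0^{\,n}\widehat B_Su$, namely the coefficient of $z^{n+1}$ in $S$ applied to $u$.

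\emph{Item (5), and the main obstacle.} For the implication $T\cong\widehat R_0\Rightarrow T$ completely non-unitary: \eqref{normid's} gives $\|Th\|^{2}=\|h\|^{2}-\|f(0)\|^{2}$ and $\|T^{*}h\|^{2}=\|h\|^{2}-\|g(0)\|^{2}$, so $\|T\|\le1$; if $h\in\bigcap_n\{x:\|T^nx\|=\|x\|\}$, running the first identity along the orbit forces $(R_0^{k}f)(0)=0$ for all $k$, i.e.\ $f\equiv0$, while $h\in\bigcap_n\{x:\|T^{*n}x\|=\|x\|\}$ dually forces $g\equiv0$; hence the intersection of the two is $\{0\}$. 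For the converse, given a completely non-unitary contraction $T$ on $\cX$ I would embed it in its Julia (elementary-rotation) unitary colligation $\bU=\sbm{T&D_{T^{*}}\\ D_T&-T^{*}}$, restricted to the closed ranges of the defect operators, and set $S:=S_{\bU}$, the characteristic function; by (4), $\widehat\bU_S$ on $\cD(S)$ is a unitary realization of $S$ with main operator $\widehat R_0$, and since the functional model of a unitary colligation is unique up to unitary equivalence on its closely connected part, while complete non-unitarity of $T$ is exactly what forces $\bU$ (hence $\widehat\bU_S$) to be closely connected, one gets $\bU\cong\widehat\bU_S$, hence $T\cong\widehat R_0$ on $\cD(S)$. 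The genuinely substantive points --- as opposed to bookkeeping of the kind already done for $\cH(S)$ in Section~\ref{S:RKHS} --- will be, in (1), the surjectivity half of the identification $\cH(\widehat K_S)=\cD(S)$ (that \emph{every} pair with $\sup_n N_n<\infty$ lies in $\cH(\widehat K_S)$ with matching norm), which needs a careful limiting argument because $\|\,\cdot\,\|_{\cD(S)}$ is not the ambient $H^{2}(\cY)\oplus H^{2}(\cU)$ norm; and, in (5), matching the construction with the Sz.-Nagy--Foias functional model and verifying that complete non-unitarity of $T$ is equivalent to close connectedness of its Julia colligation.
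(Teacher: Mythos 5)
Your proposal is sound and reaches the correct conclusions, but by a route that differs from the paper's in several places, so a comparison is worth recording. The paper proves parts (2)\&(3) and (4) in one stroke: after establishing positivity of $\widehat K_S$ via the Kolmogorov factorization coming from a unitary realization of $S$ (exactly as you do), it rearranges the kernel identity $\widehat K_S(z,w)=\sbm{\be_1(z)\\ \be_2(z)}\sbm{\be_1(w)^* & \be_2(w)^*}$ into an aggregate inner-product identity and runs the lurking isometry argument, producing a unitary $V$ from $\cD(S)\oplus\cY$ onto $\cD(S)\oplus\cU$ that is then identified entry by entry with $\widehat\bU_S^*$; both norm identities in \eqref{normid's} and the adjoint formula \eqref{hatRS*} drop out as corollaries of $V$ being unitary. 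You instead prove the second identity directly from the defining sequence $N_n$ via the recursion $u_n\bigl((\widehat R_0)^*h\bigr)=u_{n+1}(h)$, obtain the first by the coordinate flip onto $\cD(\widetilde S)$, and then assemble unitarity of $\widehat\bU_S$ from the two identities together with kernel computations. Your recursion argument is attractive because it works with the original $N_n$-definition and makes the $(\widehat R_0)^*$-invariance transparent; the paper's lurking isometry buys all of (2)--(4) simultaneously and transfers to settings where no such explicit recursion is available. For (5) the constructions coincide (Julia colligation, characteristic function), but the paper builds the intertwining unitary explicitly as $\Xi\colon x\mapsto\sbm{\cO_{C,A}x\\ \cO_{B^*,A^*}x}$ and checks $\widehat R_0\Xi=\Xi A$ directly, whereas you invoke uniqueness of the functional model on the closely connected part of a unitary colligation --- correct, but a heavier black box than the paper needs. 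One caution: the identification of $\cH(\widehat K_S)$ with the $N_n$-defined space, set-theoretically and isometrically, which you rightly flag as the substantive burden in (1), is the one point where your scheme genuinely depends on a step the paper itself does not carry out (the paper only proves positivity of $\widehat K_S$ and thereafter works with the RKHS); since your flip argument and your recursion argument must talk to each other through that identification, you do need to supply the limiting argument, and the isometric embedding $f\mapsto\sbm{f\\ \widetilde f}$ of $\cH(S)$ into the $N_n$-space established in the paper's preamble to the theorem is the natural starting point for it.
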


\section{The de Branges-Rovnyak space $\cH(S)$: other formulations} 
\label{S:dBRspaces}

Later, operator theorists, beginning with Douglas \cite{Douglas65} and 
continuing with Fillmore and Williams \cite{FW}, 
Sarason \cite{SarasonBieberbach, sarasonbook}, Ando 
\cite{Ando} and Nikolskii-Vasyunin \cite{NV0, NV1, NV2}, became interested in giving a more 
operator-theoretic formulation for de Branges-Rovnyak spaces leading to better 
insights into the results from the point of view of operator theory; 
there were also unpublished notes of Rosenblum and Douglas 
\cite{Rosenblumnotes, Douglasnotes}.  
To carry this out, one needs a generalization of closed subspace of a 
Hilbert space, namely, contractively included subspace of a Hilbert 
space, and the notion of the complementary space more general than the 
familiar notion of the orthogonal complement for an isometrically 
included closed subspace of a Hilbert space.

\subsection{Lifted-norm spaces}
Suppose that $\cM$ and $\cH$ are Hilbert spaces with $\cM$ a subset 
of $\cH$ but with its own norm $\| \cdot \|_{\cM}$ possibly distinct 
from the norm it inherits from $\cH$ as a subset 
of $\cH$.  The terminology {|em $\cM$ is contractively included in 
$\cH$} shall mean that
the inclusion map $\iota \colon \cM \to \cH$ is contractive, i.e.,
$$
  \|  x \|^{2}_{\cH} \le \| x \|^{2}_{\cM}\quad \text{for all}\quad  x \in \cM.
$$
Then one may define an operator $P$ on $\cH$ by
$P = \iota \iota^{*}$.  Then $P = P^{*}$ and
$$
P^{2} = \iota \iota^{*} \iota \iota^{*} = \iota (\iota^{*} \iota) 
\iota^{*} \le \iota \iota^{*} = P,\quad\mbox{so that}\quad
0 \le P^{2} \le P \le I_{\cH}.
$$  
Conversely, if $P$ is any 
positive semidefinite contraction operator ($0 \le P \le I_{\cH}$), then also $0 
\le P^{2} \le P \le I_{\cH}$ and one may define a Hilbert space $\cM$ 
as $\cM = \operatorname{Ran} P^{\frac{1}{2}}$ with norm given by
$$
   \| P^{\frac{1}{2}}x\|_{\cM} = \|\bQ x \|_{\cH}
$$
(the {\em lifted norm construction})
where $\bQ$ is the orthogonal projection onto $(\operatorname{Ker} 
P)^{\perp} = \overline{\operatorname{Ran}} P$.  Then one can check 
that
$$
\| P^{\frac{1}{2}} x \|_{\cH} = \| P^{\frac{1}{2}} \bQ x \|_{\cH} \le \| \bQ x \|_{\cH} = 
\| P^{\frac{1}{2}} x \|_{\cM},
$$
so $\cM$ is contractively included in $\cH$.  Moreover, the 
computation, for $x = P^{\frac{1}{2}} x_{1} \in \cM$ and $y \in \cH$,
$$
\langle \iota x, y \rangle_{\cH}=\langle \iota P^{\frac{1}{2}} x_{1},
    y \rangle_{\cH}
    = \langle P^{\frac{1}{2}} x_{1}, y \rangle_{\cH} 
 = \langle P^{\frac{1}{2}} x_{1}, P y
    \rangle_{\cM} = \langle x, P y \rangle_{\cM}
$$
shows that 
$$
  \iota^{*} \colon y \in \cH \mapsto P y \in \cM.
$$
Therefore,  $\iota \iota^{*} = P$ as an operator on $\cH$. 
In the sequel the notation $\cM = \cH^{l}_{P}$ (the {\em lifted-norm 
space} associated with the selfadjoint contraction $P$) will be used
whenever the space $\cM$ contractively included in $\cH$ arises in 
this way from the operator $P \in \cL(\cH)$ with $0 \le P \le 
I_{\cH}$.  This discussion leads to the following observation.

\begin{proposition}   \label{P:contrincl}
Contractively included subspaces $\cM$ of a Hilbert space  $\cH$ 
are in one-to-one correspondence with positive semidefinite contraction operators 
$P$ on $\cH$ ($0 \le P^{2} \le P \le I_{\cH}$) according to the formula
$$
   P = \iota \iota^{*}
$$
where $\iota \colon \cM \to \cH$ is the inclusion map, and then
$$
\cM = \operatorname{Ran} P^{\frac{1}{2}}\quad  \text{with}\quad \| 
P^{\frac{1}{2}}g\|_{\cM} = \| \bQ g 
\|_{\cH}
$$
where $\bQ$ is the orthogonal projection of $\cH$ onto 
$\overline{\operatorname{Ran}} P = (\operatorname{Ker} P)^{\perp}$,
written as $\cM = \cH^{l}_{P}$.

The case where $\cM$ is isometrically included in $\cH$ corresponds 
to the case where $P^{2} = P$ and then $P$ is the orthogonal 
projection of $\cH$ onto $\cM$.
\end{proposition}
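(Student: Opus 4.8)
The plan is to show that the two constructions recalled above are mutually inverse. Each has separately already been shown to be well defined: the operator $P = \iota\iota^{*}$ attached to a contractively included $\cM$ does satisfy $0 \le P^{2} \le P \le I_{\cH}$, and conversely the computation $\iota\iota^{*} = P$ recorded above shows that the operator attached to $\cH^{l}_{P}$ is again $P$. Thus one composition, $P \mapsto \cH^{l}_{P} \mapsto \iota\iota^{*} = P$, is already in hand, and it remains to run the other one: starting from a contractively included subspace $\cM$ with inclusion map $\iota$, form $P = \iota\iota^{*}$, and prove that $\cH^{l}_{P}$ coincides with $\cM$ both as a set of vectors and as a normed space. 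Granting this, $\cM \mapsto P$ and $P \mapsto \cH^{l}_{P}$ are mutually inverse bijections, which is the asserted correspondence.

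For the remaining step I would use the polar decomposition $\iota = U(\iota^{*}\iota)^{1/2}$. Since $\iota$ is injective, so are $\iota^{*}\iota$ and $(\iota^{*}\iota)^{1/2}$, and the latter has dense range in $\cM$; hence $U$ is not merely a partial isometry but a genuine isometry of $\cM$ into $\cH$, with $U^{*}U = I_{\cM}$ and $\operatorname{Ran} U = \overline{\operatorname{Ran}}\,\iota = \overline{\operatorname{Ran}}\,P = (\operatorname{Ker} P)^{\perp}$, so the projection $\bQ$ of the lifted-norm construction equals $UU^{*}$. Squaring gives $P = U(\iota^{*}\iota)U^{*}$, hence $P^{1/2} = U(\iota^{*}\iota)^{1/2}U^{*}$, and therefore $\operatorname{Ran} P^{1/2} = U\operatorname{Ran}(\iota^{*}\iota)^{1/2} = \operatorname{Ran}\iota = \cM$ as sets. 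For the norms, given $m \in \cM$ one checks that $x := Um$ satisfies $P^{1/2}x = U(\iota^{*}\iota)^{1/2}m = \iota m$, so the $\cH^{l}_{P}$-norm of $m$ equals $\|\bQ x\|_{\cH} = \|UU^{*}Um\|_{\cH} = \|Um\|_{\cH} = \|m\|_{\cM}$; thus $\cH^{l}_{P} = \cM$ isometrically.

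For the final assertion, if $\cM$ is isometrically included then $\iota$ is an isometry, so $\iota^{*}\iota = I_{\cM}$ and $P = \iota\iota^{*}$ is a self-adjoint idempotent, i.e. $P^{2} = P$, and it is the orthogonal projection onto $\operatorname{Ran}\iota = \cM$ (closed, being complete in the inherited norm). Conversely, if $P^{2} = P$ then $P$, being a self-adjoint idempotent, is the orthogonal projection onto $\cM = \operatorname{Ran} P^{1/2} = \operatorname{Ran} P$; here $P^{1/2} = P$, $\operatorname{Ker} P = \cM^{\perp}$ and $\bQ = P$, so the lifted norm satisfies $\|Px\|_{\cH^{l}_{P}} = \|\bQ x\|_{\cH} = \|Px\|_{\cH}$ and the inclusion is isometric. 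The only genuine obstacle is getting the polar decomposition of $\iota$ exactly right---that $U$ is a full isometry and that $P^{1/2} = U(\iota^{*}\iota)^{1/2}U^{*}$, which together yield both the operator-range identity $\operatorname{Ran}(\iota\iota^{*})^{1/2} = \operatorname{Ran}\iota$ and the matching of norms; everything else is a direct manipulation of definitions already in place, and one could alternatively invoke the operator-range results of Douglas and of Fillmore--Williams cited in Section~\ref{S:dBRspaces}.
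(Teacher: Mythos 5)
Your proposal is correct, and it actually supplies a step that the paper leaves implicit. The paper's ``proof'' of this proposition is the discussion preceding it, which verifies only one of the two compositions: starting from $0 \le P \le I_{\cH}$ it builds $\cH^{l}_{P}$, checks contractive inclusion, and computes $\iota\iota^{*} = P$, i.e.\ the round trip $P \mapsto \cH^{l}_{P} \mapsto \iota\iota^{*}$ returns $P$. The other round trip --- that an arbitrary contractively included $\cM$ is recovered, isometrically, as $\cH^{l}_{\iota\iota^{*}}$ --- is asserted but not argued. Your polar-decomposition argument $\iota = U(\iota^{*}\iota)^{1/2}$ is exactly the right tool here: injectivity of $\iota$ makes $U$ a genuine isometry of $\cM$ into $\cH$ with $U^{*}U = I_{\cM}$ and $UU^{*} = \bQ$, from which $P^{1/2} = U(\iota^{*}\iota)^{1/2}U^{*}$ follows by uniqueness of positive square roots, and then both the operator-range identity $\operatorname{Ran}(\iota\iota^{*})^{1/2} = \operatorname{Ran}\iota = \cM$ and the equality of norms $\|P^{1/2}(Um)\|_{\cH^{l}_{P}} = \|Um\|_{\cH} = \|m\|_{\cM}$ drop out. (The one point worth being explicit about, which you are, is that $\operatorname{Ran}(\iota^{*}\iota)^{1/2}$ need only be dense in $\cM$; your argument avoids this issue by working with $x = Um$ rather than trying to write $m$ in the form $(\iota^{*}\iota)^{1/2}m'$.) Your treatment of the isometric case, including the observation that $\cM$ is then closed because it is complete in the inherited norm, also fills a detail the paper takes for granted. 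In short: the paper's route establishes injectivity/surjectivity of $P \mapsto \cH^{l}_{P}$ onto the class of lifted-norm spaces and leaves the identification of a general contractively included subspace with such a space to the reader; your route completes the bijection honestly, at the cost of invoking the polar decomposition (or, as you note, the operator-range results of Douglas and Fillmore--Williams cited in Section~\ref{S:dBRspaces}).
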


It is of interest that, even when $P$ is not an orthogonal projection,  
the lifted-norm space $\cH^{l}_{I - P}$  can be viewed as a kind of 
generalized complementary space ({\em Brangesian complement} in the 
terminology of \cite{sarasonbook}) $\cM^{[\perp]}$ to $\cM = 
\cH^{l}_{P}$ as 
explained by the following proposition.

\begin{proposition}   \label{P:Brangesian}
    Let $P \in \cL(\cH)$ with $ 0 \le P  \le I_{\cH}$ and set 
$$
\cM = \cH^{l}_{P}, \quad \cM^{[\perp]} = \cH^{l}_{Q}\quad \text{where}\quad  
Q = I - P.
$$
Then $\cM$ and $\cM^{[\perp]}$ are complementary in the following 
sense:  each $f \in \cH$ has a (not necessarily unique) decomposition
$f = g + h$ with $g \in \cM$ and $h \in \cM^{[\perp]}$.  Moreover, 
the norm of $f$ in $\cH$ is given by
\begin{equation}   \label{inf}
  \| f\|^{2} = \inf \{ \|g \|^{2}_{\cM} + \| h \|^{2}_{\cM^{[\perp]}} 
  \colon g \in \cM \text{ and } h \in \cM^{[\perp]} \text{ such that 
  } f = g + h \}.
\end{equation}
Moreover:
\begin{enumerate}
    \item The infimum in \eqref{inf} is attained when $g = Pf \in 
\cH^{l}_{P}$ and $h = Q f \in \cH^{l}_{Q}$.

\item The space $\cM^{[\perp]} = \cH^{l}_{Q}$ ($Q = I-P$) can alternatively be 
characterized as
\begin{equation}   \label{Mperp}
\cM^{[\perp]} = \left\{ h \in \cH \colon \| h \|^{2}_{Q}: = \sup\{ \| g + 
 h\|^{2}_{\cH} - \|g \|^{2}_{\cH^{l}_{P}} \colon g \in \cH^{l}_{P} \} < 
\infty\right \}
\end{equation}
and then $\| h \|_{\cH^{l}_{Q}} = \| h \|_{Q}$.
\end{enumerate}
\end{proposition}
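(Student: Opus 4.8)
The plan is to work with the ``pair-to-sum'' operator
$$
T \colon \cH \oplus \cH \to \cH, \qquad T \colon \sbm{a \\ b} \mapsto P^{\frac12} a + Q^{\frac12} b,
$$
which, since $T T^{*} = P + Q = I_{\cH}$, is a coisometry; essentially all of \eqref{inf} and item (1) will come from the fact that a coisometry has minimal-norm preimages. First I would record the elementary identities $\operatorname{Ker} P^{\frac12} = \operatorname{Ker} P$ and $\overline{\operatorname{Ran}}\, P^{\frac12} = \overline{\operatorname{Ran}}\, P = (\operatorname{Ker} P)^{\perp}$ (and the same for $Q$). Combined with Proposition \ref{P:contrincl}, this says that each $g \in \cM = \cH^{l}_{P}$ has a unique representative $g = P^{\frac12} a$ with $a \in (\operatorname{Ker} P)^{\perp}$, and then $\| g \|_{\cM} = \| a \|_{\cH}$; likewise each $h \in \cM^{[\perp]} = \cH^{l}_{Q}$ is $h = Q^{\frac12} b$ with $b \in (\operatorname{Ker} Q)^{\perp}$ and $\| h \|_{\cM^{[\perp]}} = \| b \|_{\cH}$. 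The existence of a decomposition is then immediate from $f = Pf + Qf$, since $Pf = P^{\frac12}(P^{\frac12} f) \in \operatorname{Ran} P^{\frac12} = \cM$ and similarly $Qf \in \cM^{[\perp]}$.

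Next I would establish \eqref{inf} and attainment. If $f = g + h$ is any decomposition of the stated type, pick the canonical representatives $a,b$; then $T \sbm{a \\ b} = f$, so $\sbm{a \\ b}$ lies in the coset $T^{*} f + \operatorname{Ker} T$, and because $T^{*} f \in \operatorname{Ran} T^{*} = (\operatorname{Ker} T)^{\perp}$,
$$
\| g \|^{2}_{\cM} + \| h \|^{2}_{\cM^{[\perp]}} = \|a\|^{2}_{\cH} + \|b\|^{2}_{\cH} = \Bigl\| \sbm{a \\ b} \Bigr\|^{2} \ge \| T^{*} f \|^{2} = \langle T T^{*} f, f \rangle = \| f \|^{2}_{\cH} .
$$
For the opposite bound and attainment, use the particular decomposition $g = Pf = P^{\frac12}(P^{\frac12} f)$, $h = Qf = Q^{\frac12}(Q^{\frac12} f)$: here $P^{\frac12} f \in (\operatorname{Ker} P)^{\perp}$ and $Q^{\frac12} f \in (\operatorname{Ker} Q)^{\perp}$ are exactly the canonical representatives, so $\| Pf \|^{2}_{\cM} + \| Qf \|^{2}_{\cM^{[\perp]}} = \| P^{\frac12} f \|^{2} + \| Q^{\frac12} f \|^{2} = \langle (P+Q)f, f \rangle = \| f \|^{2}$. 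Thus \eqref{inf} holds, with the infimum attained at $g = Pf$, $h = Qf$ (equivalently at $\sbm{a \\ b} = T^{*} f$), which is item (1).

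Finally, for item (2): writing $g = P^{\frac12} a$ with $a$ ranging over $(\operatorname{Ker} P)^{\perp}$ — and observing that replacing $a$ by its $(\operatorname{Ker} P)^{\perp}$-component can only increase $\| P^{\frac12} a + h \|^{2} - \| a \|^{2}$, so the supremum may as well be taken over all $a \in \cH$ — and using $\| P^{\frac12} a \|^{2} - \| a \|^{2} = - \langle Q a, a \rangle$, one obtains
$$
\| h \|^{2}_{Q} = \| h \|^{2}_{\cH} + \sup_{a \in \cH} \Bigl( 2 \operatorname{Re} \langle a, P^{\frac12} h \rangle - \langle Q a, a \rangle \Bigr) .
$$
The inclusion $\cH^{l}_{Q} \subseteq \{ h : \| h \|_{Q} < \infty \}$ with $\| h \|_{Q} \le \| h \|_{\cH^{l}_{Q}}$ is then immediate from the already-proved inequality $\| g + h \|^{2}_{\cH} \le \| g \|^{2}_{\cM} + \| h \|^{2}_{\cM^{[\perp]}}$. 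For the converse I would diagonalize $Q = \int_{0}^{1} \lambda \, dE_{\lambda}$ and optimize the quadratic form mode by mode: the $\lambda$-mode contributes $(1-\lambda)\lambda^{-1}\, d\langle E_{\lambda} h, h \rangle$ for $\lambda > 0$ and is unbounded if $h$ carries mass at $\lambda = 0$, so when finite
$$
\| h \|^{2}_{Q} = \int_{(0,1]} d\langle E_{\lambda} h, h \rangle + \int_{(0,1]} \frac{1-\lambda}{\lambda} \, d\langle E_{\lambda} h, h \rangle = \int_{(0,1]} \frac{1}{\lambda} \, d\langle E_{\lambda} h, h \rangle ,
$$
and this integral is finite precisely when $h \in \operatorname{Ran} Q^{\frac12} = \cH^{l}_{Q}$, in which case it equals $\| h \|^{2}_{\cH^{l}_{Q}}$ by the lifted-norm formula of Proposition \ref{P:contrincl}. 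Hence $\{ h : \| h \|_{Q} < \infty \} = \cH^{l}_{Q} = \cM^{[\perp]}$ and $\| h \|_{Q} = \| h \|_{\cH^{l}_{Q}}$, which is \eqref{Mperp}. The main obstacle is exactly this last point — showing that mere finiteness of the supremum forces $h$ into $\operatorname{Ran} Q^{\frac12}$ and not merely into $\overline{\operatorname{Ran}}\, Q$ — and it is the spectral computation that settles it; everything else is bookkeeping with the lifted-norm construction.
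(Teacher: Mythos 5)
Your proof is correct, and it is worth recording where it coincides with and where it departs from the paper's argument. For the decomposition, the identity \eqref{inf}, and item (1), your coisometry $T \colon (a,b) \mapsto P^{\frac12}a + Q^{\frac12}b$ with $TT^{*} = P+Q = I_{\cH}$, together with the minimal-norm-preimage principle, is a conceptual repackaging of what the paper does by hand: the paper completes the square to obtain $\|g\|^{2}_{\cM} + \|h\|^{2}_{\cM^{[\perp]}} - \|f\|^{2}_{\cH} = \|Q^{\frac12}g_{1} - P^{\frac12}h_{1}\|^{2}$, and a short computation shows this is exactly $\|(g_{1},h_{1}) - T^{*}f\|^{2}$ in your language, so the two arguments are the same identity in different clothing. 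The genuine divergence is in the hard direction of item (2), namely that finiteness of $\|h\|_{Q}$ forces $h \in \operatorname{Ran} Q^{\frac12}$ with equality of norms. The paper stays soft: from the quadratic inequality in $t$ it extracts $|\langle h, P^{\frac12}g_{1}\rangle|^{2} \le M_{1}\|Q^{\frac12}g_{1}\|^{2}$, applies the Riesz representation theorem to produce $\widetilde h$ with $P^{\frac12}h = Q^{\frac12}\widetilde h$, exhibits $h = Ph + Qh = Q^{\frac12}(P^{\frac12}\widetilde h + Q^{\frac12}h) \in \operatorname{Ran}Q^{\frac12}$ by pure algebra, and then needs a separate density argument for the reverse norm inequality. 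You instead evaluate the supremum in closed form via the spectral theorem, arriving at $\|h\|_{Q}^{2} = \int_{(0,1]}\lambda^{-1}\,d\langle E_{\lambda}h,h\rangle$, and invoke the spectral characterization of $\operatorname{Ran}Q^{\frac12}$; this yields membership and the norm identity in one stroke and makes $\|h\|_{Q}$ completely explicit, at the cost of the spectral theorem and of justifying the mode-by-mode optimization (equivalently, the standard fact that $\sup_{a}(2\operatorname{Re}\langle a,v\rangle - \langle Qa,a\rangle)$ equals $\|Q^{-\frac12}v\|^{2}$ when $v \in \operatorname{Ran}Q^{\frac12}$ and is $+\infty$ otherwise). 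The paper's route, using only Riesz representation and Douglas-style range algebra, is the one that transfers to settings where a spectral decomposition is unavailable or unnatural, but both arguments are complete.
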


\begin{proof}
Note first that, since $P + Q = I_{\cH}$ by definition,
    any $f \in \cH$ has a decomposition $f = Pf + Qf$ where $g = P f 
    \in \cH^{l}_{P}$ and $h = Q f \in \cH^{l}_{Q}$.   
    
\smallskip

    Next assume that $f = g + h$ with $g \in \cH^{l}_{P}$ 
    and $h \in \cH^{l}_{Q}$.  By Proposition
\ref{P:contrincl} one can find $g_{1} \in
\overline{\operatorname{Ran}} P$ and $h_{1} \in
\overline{\operatorname{Ran}} Q$   so that
\begin{equation} \label{need}
g = P^{\frac{1}{2}} g_{1}, \quad h = Q^{\frac{1}{2}} h_{1},\quad
(g_{1} \in \overline{\operatorname{Ran}} P).
\end{equation}
Taking into account that 
$P+Q = I_{\cH}$ and that $P^{\frac{1}{2}}
Q^{\frac{1}{2}} = Q^{\frac{1}{2}} P^{\frac{1}{2}}$, one then computes
\begin{align}
    & \| g \|^{2}_{\cH^{l}_{P}} + \| h \|^{2}_{\cH^{l}_{Q}} - \| f 
    \|^{2}_{\cH} = \| g_{1} \|^{2}_{\cH} + \| h_{1} \|^{2}_{\cH} - \| 
    P^{\frac{1}{2}}g_{1} + Q^{\frac{1}{2}} h_{1}\|^{2}_{\cH}  \notag\\
    & \quad = \| g_{1} \|^{2}_{\cH} + \| h_{1} \|^{2}_{\cH} - \langle P 
    g_{1}, g_{1} \rangle_{\cH} - 2 \operatorname{Re} \langle P^{\frac{1}{2}} 
    g_{1}, Q^{\frac{1}{2}} h_{1} \rangle_{\cH} - \langle Q h_{1}, h_{1} 
    \rangle_{\cH}  \notag\\
    & \quad = \langle Q g_{1}, g_{1} \rangle_{\cH} + \langle P h_{1}, 
    h_{1} \rangle_{\cH} - 2 \operatorname{Re} \langle Q^{\frac{1}{2}} g_{1}, 
    P^{\frac{1}{2}} h_{1} \rangle_{\cH}\notag  \\
    & \quad = \| Q^{\frac{1}{2}} g_{1} - P^{\frac{1}{2}} h_{1} \|_{\cH}^{2} \ge 
0\label{need1}
\end{align}
with equality if and only if  $Q^{\frac{1}{2}} g_{1} = P^{\frac{1}{2}} h_{1}$.

\smallskip

To check property (1), note that $g = Pf \Rightarrow g_{1} = P^{\frac{1}{2}} 
f$ and $h = Qf \Rightarrow h_{1} = Q^{\frac{1}{2}} f$. Therefore,
$$
Q^{\frac{1}{2}} g_{1} = Q^{\frac{1}{2}} P^{\frac{1}{2}} f = P^{\frac{1}{2}} 
Q^{\frac{1}{2}} f = P^{\frac{1}{2}} h_{1}
$$
and hence equality occurs in \eqref{inf} with this choice of 
$g$ and $h$.
Uniqueness follows from the general fact that closed convex sets in a 
Hilbert space have a unique element of minimal norm.  

\smallskip

It remains only to verify statement (2) in the proposition.
Given $h \in \cH^{l}_{Q}$, define $\|h\|_{Q}$ as in condition \eqref{Mperp}:
$$
\|h\|^{2}_{Q} = \sup_{g \in \cH^{l}_{P}} \{ \| g + h \|^{2}_{\cH} - \| g 
\|^{2}_{\cH^{l}_{P}} \}.
$$
It has been already shown that, for any $g \in \cH^{l}_{P}$,
$$
  \| g+h\|^{2}_{\cH} \le \| g \|^{2}_{\cH^{l}_{P}} + \| h 
  \|^{2}_{\cH^{l}_{P}},
$$
from which it follows that
\begin{equation}   \label{ineq1}
  \|h\|^{2}_{\cH^{l}_{Q}} \ge \| h \|^{2}_{Q}. 
\end{equation}
The following is an alternative direct proof of \eqref{ineq1} which provides 
some additional information which will be needed later. Take $h \in 
\cH^{l}_{Q}$ and $g \in \cH^{l}_{P}$ in the form \eqref{need}.
Computation \eqref{need1} gives 
\begin{align}
    & \| h \|^{2}_{\cH^{l}_{Q}} - \| g + h \|^{2}_{\cH} + \| g 
    \|^{2}_{\cH^{l}_{P}}\notag\\
&\quad = 
 \| Q^{\frac{1}{2}} h_{1} \|^{2}_{\cH^{l}_{Q}} - \| P^{\frac{1}{2}} g_{1} + 
Q^{\frac{1}{2}} 
 h_{1} \|^{2}_{\cH} + \| P^{\frac{1}{2}} g_{1} \|^{2}_{\cH^{l}_{P}} \notag \\
& \quad = \| P^{\frac{1}{2}}h_{1} - Q^{\frac{1}{2}} g_{1} \|_{\cH}^{2} \ge 0 \label{ineq2}
\end{align}
from which \eqref{ineq1} follows.

\smallskip

Suppose now that $h \in \cH$ with $\| h \|_{Q}< \infty$.  It suffices to  
show that $h \in \cH^{l}_{Q} = \operatorname{Ran} Q^{\frac{1}{2}}$ and that 
the reverse inequality
\begin{equation}   \label{ineq3}
    \| h\|^{2}_{\cH^{l}_{Q}} \le \| h \|^{2}_{Q}
\end{equation}
holds.  From the fact that $\| h\|^{2}_{Q}< \infty$ one can see that 
$$
\| h \|^{2}_{\cH} + 2 \operatorname{Re} \langle h, P^{\frac{1}{2}} g_{1} 
\rangle_{\cH} + \| P^{\frac{1}{2}} g_{1} \|^{2} \le M + \| g_{1} \|^{2}
$$
for all $g_{1} \in \overline{\operatorname{Ran}} P$ for some constant 
$M < \infty$.  It thus follows that
\begin{align}
 0 & \le  M - \langle h, h \rangle_{\cH} - 2 \operatorname{Re} \langle h, 
  P^{\frac{1}{2}} g_{1} \rangle_{\cH} + \| g_{1} \|^{2} - \| P^{\frac{1}{2}} g_{1} 
  \|^{2} \notag  \\
  & = M - \|h\|^2_{\cH} - 2 \operatorname{Re} \langle h, 
  P^{\frac{1}{2}} g_{1} \rangle_{\cH} + \langle (I-P) g_{1}, g_{1} 
  \rangle_{\cH} \notag \\
  & = M - \|h\|^2_{\cH}- 2 \operatorname{Re} \langle h, 
  P^{\frac{1}{2}} g_{1} \rangle_{\cH} + \|Q^{\frac{1}{2}}g_{1}\|^2_{\cH}
  \label{ineq4}
\end{align}
for all $g_{1} \in \overline{\operatorname{Ran}} P$.  Set
$$
  M_{1}: = M - \|h\|^2_{\cH}.
$$
Then $M_{1} \ge 0$ since one may choose $g_{1} = 0$ in the inequality 
\eqref{ineq4}.  Replacing $g_{1}$ by $\omega t g_{1}$ where $t$ is an 
arbitrary real number and $\omega$ is an appropriate unimodular 
constant, \eqref{ineq4} may be rewritten in the form
\begin{equation}   \label{ineq5}
 M_{1} - 2 | \langle h, P^{\frac{1}{2}} g_{1} \rangle_{\cH}| t + 
\|Q^{\frac{1}{2}}g_{1}\|^2_{\cH}t^{2} \ge 0 \quad \text{for 
 all real \; } t.
\end{equation}
The Quadratic Formula test for the roots of a real polynomial implies that  
$$ | \langle h, P^{\frac{1}{2}} g_{1} \rangle_{\cH} |^{2} \le M_{1} \| Q^{\frac{1}{2}} 
g_{1} \|^{2}.
$$
The Riesz representation theorem for a linear functional on a 
Hilbert space then implies that there is an $\widetilde h \in 
\overline{\operatorname{Ran}} Q$ so 
that 
$$
  \langle h, P^{\frac{1}{2}} g_{1} \rangle_{\cH} = \langle \widetilde 
 h, Q^{\frac{1}{2}} g_{1} \rangle_{\cH}.
$$
It now follows from this last identity  that $P^{\frac{1}{2}} h = Q^{\frac{1}{2}} \widetilde h$ and
\begin{equation}
h = P h + Q h = P^{\frac{1}{2}} Q^{\frac{1}{2}} \widetilde h + Q h=  
Q^{\frac{1}{2}}P^{\frac{1}{2}} \widetilde h + Q h= Q^{\frac{1}{2}} h_{1}
\label{Rieszrep}
\end{equation}
with $h_{1} = P^{\frac{1}{2}} \widetilde h +Q^{\frac{1}{2}} h$, 
which implies that $h \in \cH^{l}_{Q}$. Furthermore, for $h_1$ 
as defined above, 
$$
  P^{\frac{1}{2}} h_{1} = P^{\frac{1}{2}}(P^{\frac{1}{2}} \widetilde h + Q^{\frac{1}{2}} 
h) = P
  \widetilde h + P^{\frac{1}{2}} Q^{\frac{1}{2}} h = (I - Q) \widetilde h + 
Q^{\frac{1}{2}}
  P^{\frac{1}{2}} h
$$
is in $\overline{\operatorname{Ran}} Q$, $\widetilde h$ itself was arranged 
to be in $\overline{\operatorname{Ran}} Q$.
By Proposition \ref{P:contrincl}, any $g \in 
\cH^{l}_{P}$ can be written as $g = P^{\frac{1}{2}} g_{1}$ with $g_{1} \in 
\overline{\operatorname{Ran}}  P$. For this arbitrary $g$ and for $h = Q^{\frac{1}{2}} 
h_{1} \in \cH^{l}_{Q}$
(see \eqref{ineq2}), it holds that
\begin{equation}
 \| g+h\|^{2}_{\cH} - \| g \|^{2}_{\cH^{l}_{P}} = \| h 
 \|^{2}_{\cH^{l}_{Q}} - \| P^{\frac{1}{2}} h_{1} - Q^{\frac{1}{2}} g_{1} \|_{\cH}^{2}.
\label{need2}
\end{equation}
Since $P^{\frac{1}{2}} h_{1}$ is in $\overline{\operatorname{Ran}} Q$, it follows that
$$
  \inf_{g_{1} \in \cH} \| P^{\frac{1}{2}} h_{1} - Q^{\frac{1}{2}} g_{1} \|= 0.
$$
Since $P$ and $Q$ commute, one even has
$$
 \inf_{g_{1} \in
  \overline{\operatorname{Ran}} P }\| P^{\frac{1}{2}} h_{1} - Q^{\frac{1}{2}} g_{1} \| = 
0.
$$
Combining this with \eqref{need2} and \eqref{Mperp} leads to the reverse inequality 
\eqref{ineq3}, and completes the verification of statement (2) in 
Proposition \ref{P:Brangesian}.
\end{proof}

Note next that if $\cM = \cH^{l}_{P}$, then  
$\cM^{[\perp]} = \cH^{l}_{Q}$ with $Q = I-P$.  Hence for the 
complementary space $(\cM^{[\perp]})^{[\perp]}$ of $\cM^{[\perp]}$, 
$$
 ( \cM^{[\perp]})^{[\perp]} = \cH^{l}_{I-Q} = \cH^{l}_{P} = \cM,
$$
i.e., one comes back to $\cM$ itself.  The following corollary is 
immediate from this observation combined with Proposition 
\ref{P:Brangesian}.

\begin{corollary}   \label{C:Brangesian}  
  In addition to the complementary space $\cM^{[\perp]}$ being 
  recovered from $\cM$ via the criterion \eqref{Mperp}, one can also 
  recover $\cM = \cH^{l}_{P}$ from $\cM^{[\perp]} = \cH^{l}_{Q}$ ($Q 
  = I - P$) in the same way:
$$ 
   \cM = \{ g \in \cH \colon \| g \|^{2}_{P}: = 
\sup_{h \in \cH^{l}_{Q}}\| g + 
 h\|^{2}_{\cH} - \|h \|^{2}_{\cH^{l}_{Q}} < \infty\}.
$$
\end{corollary}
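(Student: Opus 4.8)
The plan is to obtain the corollary directly from Proposition~\ref{P:Brangesian} by exploiting the symmetry between $P$ and $Q = I - P$, together with the involutivity of the Brangesian complement recorded in the paragraph just above, namely $(\cM^{[\perp]})^{[\perp]} = \cH^{l}_{I-Q} = \cH^{l}_{P} = \cM$. So the strategy is: reinterpret the recovery criterion \eqref{Mperp} as an instance of itself with the roles of the two complementary spaces swapped.

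First I would note that the hypothesis of Proposition~\ref{P:Brangesian} is symmetric under $P \leftrightarrow Q$: the condition $0 \le P \le I_{\cH}$ holds if and only if $0 \le Q = I - P \le I_{\cH}$. Hence the proposition applies verbatim with the pair $(P,Q)$ replaced by $(Q, I - Q) = (Q, P)$. In that application the space called $\cM$ in the proposition becomes $\cH^{l}_{Q} = \cM^{[\perp]}$, its Brangesian complement becomes $\cH^{l}_{I-Q} = \cH^{l}_{P} = \cM$, and part~(2) of the proposition then characterizes this complement as
$$
\Bigl\{ g \in \cH \colon \|g\|^{2}_{P} := \sup_{h \in \cH^{l}_{Q}} \bigl( \| g + h\|^{2}_{\cH} - \| h \|^{2}_{\cH^{l}_{Q}} \bigr) < \infty \Bigr\},
$$
while simultaneously giving the norm identity $\|g\|_{\cH^{l}_{P}} = \|g\|_{P}$ on this set. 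Combined with $(\cM^{[\perp]})^{[\perp]} = \cM$, this is exactly the asserted formula. The symmetric decomposition assertion of Proposition~\ref{P:Brangesian} (every $f \in \cH$ splits as $f = g+h$ with $g \in \cM$, $h \in \cM^{[\perp]}$, infimum attained) transfers in the same way, though it is not needed for the corollary as stated.

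I do not anticipate any genuine obstacle: the corollary is in essence a relabeling of Proposition~\ref{P:Brangesian}. The one point deserving a moment of care is the bookkeeping — verifying that feeding $Q$ into the proposition really produces the auxiliary operator $I - Q = P$ (rather than some unrelated operator), so that the recovered space is genuinely $\cM = \cH^{l}_{P}$ and the recovered norm is genuinely $\|\cdot\|_{\cH^{l}_{P}}$. Once the substitution is written out this is immediate, so the proof is very short.
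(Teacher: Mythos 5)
Your argument is correct and is precisely the paper's: the corollary is obtained by applying Proposition~\ref{P:Brangesian} with $P$ and $Q$ interchanged, using the involutivity $(\cM^{[\perp]})^{[\perp]} = \cH^{l}_{I-Q} = \cH^{l}_{P} = \cM$ noted just before the corollary. No further comment is needed.
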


The next Proposition presents the role of the {\em overlapping space} 
in measuring the extent to which the Brangesian complementary space 
fails to be a true orthogonal complement.

\begin{proposition}  \label{P:overlapping}
The map $\Xi \colon f \oplus g \mapsto f+g$ is a partial isometry from 
$\cM \oplus \cM^{[\perp]}$ onto $\cH$.
Furthermore, if one introduces the {\em overlapping space} $\cL_{P \cdot Q}$ 
by 
$$
\cL_{P \cdot Q} = \cM^{[\perp]} \cap \cM = \cH^{l}_{P} \cap
\cH^{l}_{Q}\quad\mbox{with norm}\quad 
  \| f \|^{2}_{\cL_{P \cdot Q}} = \| f \|^{2}_{\cH^{l}_{P}} + \| f
  \|^{2}_{\cH^{p}_{Q}},
$$
then the kernel of the linear transformation $\Xi \colon 
\cM^{[\perp]} \oplus \cM \to \cH$ is given by
$$
  \operatorname{Ker} \Xi = \{ f \oplus -f \in \cM \oplus 
  \cM^{[\perp]} \colon f \in \cL_{P \cdot Q} \}
$$
and the map $\widehat \Xi \colon \cM^{[\perp]} \oplus \cM \to \cH 
\oplus \cL_{T}$ given by
$$
    \widehat \Xi \colon f \oplus g \mapsto (f + g) \oplus k\quad 
\text{where}\quad k \oplus -k = P_{\operatorname{Ker} \Xi}(f \oplus g)
$$
is unitary.
\end{proposition}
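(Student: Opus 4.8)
The plan is to show that $\Xi$ is actually a coisometry by writing down its adjoint explicitly, and then to read off all three assertions of the proposition from that one fact.

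First I would note that the inequality $\|g+h\|^{2}_{\cH} \le \|g\|^{2}_{\cH^{l}_{P}} + \|h\|^{2}_{\cH^{l}_{Q}}$ proved along the way in Proposition~\ref{P:Brangesian} (it is the content of~\eqref{need1}) already shows that $\Xi$ is a contraction from $\cM \oplus \cM^{[\perp]}$ into $\cH$, so $\Xi^{*}$ makes sense. To compute $\Xi^{*}$ I would invoke Proposition~\ref{P:contrincl}: the adjoint of the inclusion of $\cM = \cH^{l}_{P}$ into $\cH$ is $y \mapsto Py$, and likewise the adjoint of the inclusion of $\cM^{[\perp]} = \cH^{l}_{Q}$ is $y \mapsto Qy$. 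Pairing $\Xi(g\oplus h) = g+h$ with $y\in\cH$ and using these adjoint inclusions to transfer $y$ into each summand then yields $\Xi^{*}\colon y \mapsto Py \oplus Qy$. Since $P + Q = I_{\cH}$, this gives $\Xi\Xi^{*} = I_{\cH}$, so $\Xi^{*}$ is an isometry and $\Xi$ is a coisometry; in particular $\Xi$ is a partial isometry with initial space $(\operatorname{Ker}\Xi)^{\perp} = \operatorname{Ran}\Xi^{*}$ mapped onto all of $\cH$, which is the first assertion. (As a sanity check, the equality case of~\eqref{need1} should reproduce $\|\Xi^{*}y\|^{2} = \|y\|^{2}$.)

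For the kernel, $g \oplus h \in \operatorname{Ker}\Xi$ forces $h = -g$ with $g$ in both $\cM$ and $\cM^{[\perp]}$, i.e.\ $g \in \cL_{P\cdot Q}$, and conversely; and the map $J\colon f \mapsto f \oplus (-f)$ is an isometry of $\cL_{P\cdot Q}$ onto $\operatorname{Ker}\Xi$ precisely because the overlapping norm was defined as $\|f\|^{2}_{\cL_{P\cdot Q}} = \|f\|^{2}_{\cH^{l}_{P}} + \|f\|^{2}_{\cH^{l}_{Q}}$. This gives the stated description of $\operatorname{Ker}\Xi$. Finally, to see that $\widehat\Xi$ is unitary I would split $\cM \oplus \cM^{[\perp]} = \operatorname{Ker}\Xi \oplus (\operatorname{Ker}\Xi)^{\perp}$ orthogonally: the restriction $\Xi_{0}$ of $\Xi$ to $(\operatorname{Ker}\Xi)^{\perp}$ is, by the coisometry property, a unitary onto $\cH$; and on a general element $f \oplus g = v_{0} + v_{1}$ (with $v_{0} = P_{\operatorname{Ker}\Xi}(f\oplus g) = k\oplus(-k)$, so $k = J^{-1}v_{0}$, and $v_{1} \in (\operatorname{Ker}\Xi)^{\perp}$) one has $\Xi(f\oplus g) = \Xi_{0}v_{1}$, hence $\widehat\Xi(f\oplus g) = \Xi_{0}v_{1} \oplus J^{-1}v_{0}$. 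Thus $\widehat\Xi$ factors as the unitary orthogonal-decomposition map followed by the direct sum $\Xi_{0}\oplus J^{-1}$ of two unitaries, and is therefore unitary from $\cM^{[\perp]}\oplus\cM$ onto $\cH \oplus \cL_{P\cdot Q}$.

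The only step calling for any real care is the identification of $\Xi^{*}$, which depends entirely on having the adjoints of the two inclusion maps from Proposition~\ref{P:contrincl}; once $\Xi\Xi^{*} = I_{\cH}$ is in hand, the rest is routine bookkeeping with the orthogonal decomposition of $\cM \oplus \cM^{[\perp]}$ and with the isometry $J$ identifying $\operatorname{Ker}\Xi$ with $\cL_{P\cdot Q}$.
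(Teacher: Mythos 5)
Your proposal is correct, and it supplies exactly the details that the paper leaves out: the published proof of Proposition~\ref{P:overlapping} consists of the single line ``this follows essentially from the definitions,'' so your computation is the intended argument made explicit. The key step --- using the adjoint inclusions $\iota_P^* y = Py$ and $\iota_Q^* y = Qy$ from Proposition~\ref{P:contrincl} to get $\Xi^* y = Py \oplus Qy$ and hence $\Xi \Xi^* = I_{\cH}$ --- is the right one, and the kernel identification, the isometry $J \colon f \mapsto f \oplus (-f)$ of $\cL_{P\cdot Q}$ onto $\operatorname{Ker}\Xi$, and the factorization of $\widehat\Xi$ through the orthogonal decomposition all check out (you are also right that the target space should read $\cH \oplus \cL_{P \cdot Q}$; the $\cL_{T}$ in the statement is a typographical slip anticipating the later notation).
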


\begin{proof}
    This follows essentially from the definitions.
 \end{proof}

\subsection{Pullback spaces}  \label{S:pullback}

The following variant of the lifted-norm construction given above will be useful in the 
sequel. 
Let $T \in \cL(\cH_{0}, \cH)$ be any contraction operator between 
two Hilbert spaces $\cH_{0}$ and $\cH$ (in particular, even if 
$\cH_{0} = \cH$, $T$ is not necessarily 
positive or even selfadjoint) and set $\cM = \operatorname{Ran} T$ 
with norm given by
\begin{equation}   \label{p-norm}
   \| T x \|_{\cM} = \| \bQ x \|_{\cH}
\end{equation}
where $\bQ$ is the orthogonal projection of $\cH_{0}$ onto 
$(\operatorname{Ker} T)^{\perp} = \overline{\operatorname{Ran}} T^{*}$ 
(the {\em pull-back construction}). As $T$ is an isometry from 
the complete space $\operatorname{Ran} \bQ$ in the $\cH_{0}$-norm onto 
$\cM$, it is easily seen that $\cM$ so defined is a Hilbert space.
Whenever the Hilbert space $\cM$ contractively included in $\cH$ has 
the form $\cM = \operatorname{Ran T}$ for a contraction operator $T$ 
with norm given by \eqref{p-norm}, the notation $\cM = \cH^{p}_{T}$ 
(the {\em pull-back} space associated with $T$) shall be applied.

\smallskip

Suppose that $\cM = \cH^{p}_{T}$ and let $\iota \colon \cM = \operatorname{Ran} T \to \cH$
be the inclusion map.  The following computation
\begin{align*}
    \langle \iota T x, y \rangle_{\cH} & =
 \langle Tx, y \rangle_{\cH} = \langle x, T^{*} y \rangle_{\cH} = 
 \langle Tx, T T^{*} y \rangle_{\cM}
\end{align*}
shows that 
$$
\iota^{*} \colon y \in \cH \mapsto T T^{*} y \in \cM
$$
and hence $\iota \iota^{*} = T T^{*} = :P$ as an operator on $\cH$.  
Therefore the pull-back space $\cH^{p}_{T}$ is isometrically 
equal to the lifted norm space $\cH^{l}_{T T^{*}}$, and the 
lifted-norm space $\cH^{l}_{P}$ (where $0 \le P \le I$) is 
isometrically equal to the pull-back space $\cH^{p}_{P^{1/2}}$.

\smallskip

While a lifted norm space uniquely determines the associated positive 
contraction $P$ ($\cH^{l}_{P} = \cH^{l}_{P'} \Leftrightarrow P = P'$), pullback 
spaces determine the associated contraction operator only up to a 
partially isometric right factor: {\em $\cH^{p}_{T} = \cH^{p}_{T'}$ if and 
only if there is a partial isometry $\alpha \colon \cH_{0} \to 
\cH_{0}'$ so that $T = T' \alpha$ and $T' = T \alpha^{*}$}.

\smallskip
In conclusion, it follows that all the observations made in the previous section 
concerning 
lifted-norm spaces apply equally well to pullback spaces. In 
general, the Brangesian complementary spaces 
$(\cH^{p}_{T})^{[\perp]}$ to the pullback space $\cH^{p}_{T}$ can be 
identified with the lifted-norm space $\cH^{l}_{I - T T^{*}}$ (or 
equivalently, the pullback space $\cH^{p}_{(I - T T^{*})^{\frac{1}{2}}}$).
An immediate consequence of these observations and Corollary 
\ref{C:Brangesian} is:  {\em given a contraction 
operator $T \in \cL(\cH_{0}, \cH)$, the space $\cH^{p}_{T}$ can be 
characterized as}
$$
\cH^{p}_{T} = \{ h \in \cH \colon \sup_{g \in \cH^{l}_{I - T
T^{*}}} \{ \| g + h \|^{2}_{\cH} - \| 
g \|^{2}_{\cH^{l}_{I - T T^{*}}}\} < \infty\}
$$
(see also \cite[Theorem 4.1]{FW}).
Moreover, the pullback spaces $\cH^{p}_{T}$ is isometrically included in $\cH$ 
exactly when $T T^{*}$ is a projection, i.e., when $T$ is a partial isometry.  

\smallskip

The overlapping space $\cL_{P \cdot Q}$ construction in Proposition 
\ref{P:overlapping} has a slightly different form in the original de 
Branges-Rovnyak theory \cite{dBR1, dBR2} for the special case where 
$P = I - TT^{*}$ and $Q = T T^{*}$ which will be now  described.  In 
the case where $T$ is an isometry (not just a partial isometry as 
came up in the previous paragraph), then the operator
$$
\begin{bmatrix} \iota _{\cH^{l}_{I - t T^{*}}} & T \end{bmatrix}
\colon f \oplus g \mapsto f + T g
$$
is unitary from $\cH^{l}_{I - T T^{*}} \oplus \cH_{0}$ onto $\cH$.
The de Branges-Rovnyak overlapping space $\cL_{T}$ measures the 
extent to which $\begin{bmatrix} \iota_{\cH^{l}_{I - T T^{ *}}} & 
I_{\cH_{0}} \end{bmatrix}$ fails to be isometric for the case of a 
general contraction operator $T \in \cL(\cH_{0}, \cH)$.
Define the space $\cL_{T}$ by
\begin{equation}  \label{cLT1}
   \cL_{T} = \left\{ f \in \cH_{0} \colon  T f \in \cH^{l}_{I - T 
   T^{*}} \right\}
\end{equation}
with norm given by
\begin{equation}   \label{cLT2}
   \| f \|^{2}_{\cL_{T}} = \| T f \|^{2}_{\cH^{l}_{I - T T^{*}}} + \| f 
   \|^{2}_{\cH_{0}}.
\end{equation}

\begin{proposition}   \label{P:cLT}
    For a contraction operator  $T\in\cL(\cH_{0}, \cH)$ define the 
    overlapping space $\cL_{T}$ via \eqref{cLT1}, \eqref{cLT2}. 
Let $\Xi_{T} \colon \cH^{l}_{I - T T^{*}} \oplus \cH_{0} \to \cH$ 
be the operator given by
$$
  \Xi_{T} = \begin{bmatrix} \iota_{\cH^{l}_{I - T T^{*}}} & T 
\end{bmatrix} \colon \begin{bmatrix}  f \\ g \end{bmatrix} \mapsto f 
+ T g.
$$
Then $\Xi_{T}$ is a coisometry from $\cH^{l}_{I - T T^{*}} \oplus 
\cH_{0}$ onto $\cH$ with kernel given by
$$
  \operatorname{Ker} \Xi_{T} = \left\{ \begin{bmatrix} Tf \\ -f 
\end{bmatrix} \colon f \in \cL_{T} \right\},
$$
and the map  
$$ \widehat \Xi_{T} \colon \begin{bmatrix} f \\ g \end{bmatrix} \mapsto 
 \begin{bmatrix} f + T g \\ h \end{bmatrix} \text{ \rm where } h \in 
     \cL_{T} \text{ \rm is determined by } \begin{bmatrix} Th \\ -h 
 \end{bmatrix} = P_{\operatorname{Ker} \Xi_{T}} \begin{bmatrix} f \\ 
 g \end{bmatrix}
$$
is unitary from $\cH^{l}_{I - T T^{*}} \oplus \cH_{0}$ onto $\cH 
\oplus \cL_{T}$. Moreover, the overlapping space $\cL_{T}$ is itself 
isometrically equal to a 
lifted norm space:
\begin{equation}   \label{isomiden}
   \cL_{T} = \cH^{l}_{I - T^{*}T}.
\end{equation}
\end{proposition}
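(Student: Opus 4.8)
The plan is to build $\Xi_{T}$ as a composition of two maps whose coisometry properties are already at hand, read off $\operatorname{Ker}\Xi_{T}$ by inspection, upgrade the coisometry to the unitary $\widehat{\Xi}_{T}$ by the standard device, and finally prove the isometric identity \eqref{isomiden} by an intertwining argument. First I would note that $\widetilde{T}\colon\cH_{0}\to\cH^{p}_{T}=\operatorname{Ran}T$, $g\mapsto Tg$, is a coisometry: by the definition \eqref{p-norm} of the pull-back norm, $\|Tg\|_{\cH^{p}_{T}}=\|\bQ g\|_{\cH_{0}}$, so $\widetilde{T}$ vanishes on $\operatorname{Ker}T$, is isometric on $(\operatorname{Ker}T)^{\perp}$, and maps onto $\cH^{p}_{T}$. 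Since $\cH^{p}_{T}=\cH^{l}_{TT^{*}}$ (from the pull-back discussion), Propositions \ref{P:Brangesian} and \ref{P:overlapping} applied with $P=TT^{*}$ and $Q=I-TT^{*}$ show that $f\oplus h\mapsto f+h$ is a partial isometry from $\cH^{l}_{I-TT^{*}}\oplus\cH^{l}_{TT^{*}}$ onto $\cH$, hence a coisometry $\Xi_{0}$. Then $\Xi_{T}=\Xi_{0}\circ(I_{\cH^{l}_{I-TT^{*}}}\oplus\widetilde{T})$ is a composition of coisometries, hence a coisometry onto $\cH$. (Alternatively one argues directly: $\Xi_{T}$ is a contraction by Proposition \ref{P:Brangesian}, and for $w\in\cH$ the vector $\sbm{(I-TT^{*})w\\ T^{*}w}$ is a preimage of $w$ of equal norm, using the attained infimum in Proposition \ref{P:Brangesian}(1) together with $T^{*}w\in(\operatorname{Ker}T)^{\perp}$.)

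The kernel is then immediate: $f\oplus g\in\operatorname{Ker}\Xi_{T}$ forces $f=-Tg$, and $f\in\cH^{l}_{I-TT^{*}}$ is exactly the requirement $Tg\in\cH^{l}_{I-TT^{*}}$, i.e.\ $g\in\cL_{T}$ by \eqref{cLT1}; writing $f=-g$ yields the stated description of $\operatorname{Ker}\Xi_{T}$. For $\widehat{\Xi}_{T}$ I would use the general fact that a coisometry $\Xi_{T}$ splits its domain as $\operatorname{Ker}\Xi_{T}\oplus(\operatorname{Ker}\Xi_{T})^{\perp}$, with $\Xi_{T}$ restricting to a unitary from $(\operatorname{Ker}\Xi_{T})^{\perp}$ onto $\cH$; hence $x\mapsto(\Xi_{T}x)\oplus(P_{\operatorname{Ker}\Xi_{T}}x)$ is unitary onto $\cH\oplus\operatorname{Ker}\Xi_{T}$ (on $(\operatorname{Ker}\Xi_{T})^{\perp}$ it agrees with the unitary $\Xi_{T}$ landing in the first summand, on $\operatorname{Ker}\Xi_{T}$ it is the identity into the second, and these images are orthogonal). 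Finally $h\mapsto(Th)\oplus(-h)$ is an isometry of $\cL_{T}$ onto $\operatorname{Ker}\Xi_{T}$, since $\|(Th)\oplus(-h)\|^{2}=\|Th\|^{2}_{\cH^{l}_{I-TT^{*}}}+\|h\|^{2}_{\cH_{0}}=\|h\|^{2}_{\cL_{T}}$ by \eqref{cLT2}; composing this with the previous unitary gives exactly the asserted unitary $\widehat{\Xi}_{T}\colon \cH^{l}_{I-TT^{*}}\oplus\cH_{0}\to\cH\oplus\cL_{T}$. In particular $\cL_{T}$, being isometric to the closed subspace $\operatorname{Ker}\Xi_{T}$, is automatically complete.

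The one genuinely computational point, and the step I expect to be the main obstacle, is the isometric identity \eqref{isomiden}. Write $D=(I-T^{*}T)^{1/2}$ on $\cH_{0}$ and $D_{*}=(I-TT^{*})^{1/2}$ on $\cH$, so that $\cH^{l}_{I-T^{*}T}=\cH^{p}_{D}=\operatorname{Ran}D$ and $\cH^{l}_{I-TT^{*}}=\cH^{p}_{D_{*}}=\operatorname{Ran}D_{*}$, and recall the intertwinings $TD=D_{*}T$ and $T^{*}D_{*}=DT^{*}$, which follow from $T(I-T^{*}T)=(I-TT^{*})T$ via the functional calculus. For the inclusion $\cH^{p}_{D}\subseteq\cL_{T}$, take $f=Dx$ with $x\in(\operatorname{Ker}D)^{\perp}$ (every element of $\operatorname{Ran}D$ is of this form); then $Tf=D_{*}(Tx)\in\operatorname{Ran}D_{*}=\cH^{l}_{I-TT^{*}}$, so $f\in\cL_{T}$. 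Moreover $T^{*}$ carries $\operatorname{Ker}D_{*}$ into $\operatorname{Ker}D$ (if $TT^{*}y=y$ then $T^{*}T(T^{*}y)=T^{*}y$), so $Tx\perp\operatorname{Ker}D_{*}$; hence by the pull-back norm \eqref{p-norm}, $\|Tf\|^{2}_{\cH^{l}_{I-TT^{*}}}=\|Tx\|^{2}$, and since $\|Dx\|^{2}_{\cH_{0}}=\langle(I-T^{*}T)x,x\rangle=\|x\|^{2}-\|Tx\|^{2}$ one obtains $\|f\|^{2}_{\cL_{T}}=\|Tx\|^{2}+\|x\|^{2}-\|Tx\|^{2}=\|x\|^{2}=\|f\|^{2}_{\cH^{p}_{D}}$, so the inclusion is isometric. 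For the reverse inclusion $\cL_{T}\subseteq\cH^{p}_{D}$, let $f\in\cL_{T}$, so $Tf=D_{*}z$ for some $z\in\cH$; applying $T^{*}$ and using $T^{*}D_{*}=DT^{*}$ together with $T^{*}T=I-D^{2}$ gives $f=T^{*}Tf+D^{2}f=D(T^{*}z)+D(Df)=D(T^{*}z+Df)\in\operatorname{Ran}D$. Thus $\cL_{T}$ and $\cH^{p}_{D}=\cH^{l}_{I-T^{*}T}$ coincide as sets, and since the inclusion $\cH^{p}_{D}\subseteq\cL_{T}$ was just shown to be norm-preserving, the two norms agree throughout, which is \eqref{isomiden}. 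The delicacy in this last step is keeping the defect-operator intertwinings and the pull-back norm formulas straight; the rest of the proof is routine bookkeeping.
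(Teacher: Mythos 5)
Your proposal is correct and follows essentially the same route as the paper: the same factorization of $\Xi_{T}$ through the partial isometry of Proposition \ref{P:overlapping} composed with the coisometry $g\mapsto Tg$ onto $\cH^{p}_{T}=\cH^{l}_{TT^{*}}$, the same reading-off of the kernel, and the same defect-operator intertwining $TD=D_{*}T$ for both inclusions in \eqref{isomiden}. The only differences are that you spell out the unitarity of $\widehat{\Xi}_{T}$ (which the paper dismisses as immediate) and justify the orthogonality $Tx\perp\operatorname{Ker}D_{*}$ slightly differently; both are fine.
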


\begin{proof}  By definition $T$ is a coisometry from $\cH_{0}$ onto 
    $\cH^{p}_{T} = \cH^{l}_{T T^{*}}$.  By Proposition 
    \ref{P:overlapping}, the map $\Xi^{l}_{TT^{*}} = \begin{bmatrix} 
    \iota_{\cH^{l}_{I - T T^{*}}} & \iota_{\cH^{l}_{TT^{*}}} 
\end{bmatrix}$ is a coisometry from $\cH^{l}_{I - T T^{*}} \oplus 
\cH^{l}_{TT^{*}}$ onto $\cH$.  Note next that the factorization
\begin{equation}   \label{fact}
 \Xi_{T} = \Xi^{l}_{TT^{*}} \circ \begin{bmatrix} 
 I_{\cH^{l}_{TT^{*}}} & 0 \\ 0 & T \end{bmatrix}
\end{equation}
exhibits the map $\Xi_{T}$ as the composition of coisometries (here 
$T$ is viewed as an element of $\cL(\cH_{0}, \cH^{p}_{T})$) and hence 
$\Xi_{T}$ is a coisometry as asserted.  

\smallskip

Proposition 
\ref{P:overlapping} identifies $\operatorname{Ker} \Xi^{l}_{TT*}$ as 
$\{ f \oplus -f \colon f \in \cL_{P \cdot Q}\}$ (where $P = I 
- T T^{*}$ and $Q = T T^{*}$).  From the factorization \eqref{fact} 
one can see that 
$\left[ \begin{smallmatrix} f \\ g \end{smallmatrix} \right]\in 
\operatorname{Ker} \Xi_{T}$ if and only if $\left[ 
\begin{smallmatrix} f \\ T g \end{smallmatrix} \right] \in \operatorname{Ker} 
    \Xi^{l}_{TT^{*}}$.  By Proposition \ref{P:overlapping}, this 
    means that  $\left[ \begin{smallmatrix} f \\ T g 
\end{smallmatrix} \right]$ has the form $\left[ \begin{smallmatrix} f \\ -f 
\end{smallmatrix} \right]$ with $f \in \cL_{P \cdot Q} = 
\cH^{l}_{I-TT^{*}} \cap \cH^{p}_{T}$.  Thus $Tg = -f \in \cH^{l}_{I - 
T T^{*}} \cap \cH^{p}_{T}$ so $g \in \cL_{T}$ and $\left[ 
\begin{smallmatrix} -Tg \\ g \end{smallmatrix} \right] \in 
    \operatorname{Ker} \Xi_{T}$.  The unitary property of $\widehat 
    \Xi_{T}$ now follows easily.

\smallskip
    
    It remains only to verify that $\cL_{T} = \cH^{l}_{I - T^{*}T}$ 
    isometrically.  Suppose first that $g = (I - T^{*} T)^{\frac{1}{2}} g_{1} 
    \in \cH^{l}_{I - T^{*}T}$.  Then certainly $g \in \cH_{0}$.  But 
    also,
 $$
 T g = T (I - T^{*} T)^{\frac{1}{2}} g_{1} = (I - T T^{*})^{\frac{1}{2}} T g_{1} \in 
 \cH^{l}_{I - T T^{*}}.
 $$
 Moreover, the same intertwining $T(I-T^{*}T) = (I - T 
 T^{*}) T$ implies that $T g_{1} \in \overline{\operatorname{Ran}} 
 \cH^{l}_{I-TT^{*}}$ since $g_{1} \in \overline{\operatorname{Ran}} 
 (I - T^{*} T)$. Therefore,
\begin{align*}
    \| g \|^{2}_{\cL_{T}} & = \| h \|^{2}_{\cH_{0}} + \| T g 
    \|^{2}_{\cH^{l}_{I - T T^{*}}} \\
  & = \| (I - T^{*} T)^{\frac{1}{2}} g_{1} \|^{2}_{\cH_{0}} + 
 \| (I - T T^{*})^{\frac{1}{2}} T g_{1} \|^{2}_{\cH^{l}_{I - TT^{*}}} \\
 & = \langle (I - T^{*} T) g_{1}, g_{1} \rangle_{\cH_0} + \| T 
 g_{1}\|^{2}_{\cH} 
= \| g_{1} \|^{2}_{\cH_{0}}= \| g \|^{2}_{\cH^{l}_{I - T^{*} T}}
\end{align*}
and the equality of norms follows.
Conversely, if $g \in \cL_{T}$, then it follows that $g \in 
\cH_{0}$ with $Tg \in \cH^{l}_{I - T^{*}T}$.  Hence there is a 
$\widetilde g \in \overline{\operatorname{Ran}} (I - T T^{*})$ so that
$Tg = (I - T T^{*})^{\frac{1}{2}} \widetilde g$. Therefore,
\begin{align*}
g & = T^{*} T g + (I - T^{*} T) g \\
& = T^{*} (I - T T^{*})^{\frac{1}{2}} \widetilde g + (I - T^{*} T) g \\
& = (I - T^{*} T)^{\frac{1}{2}} T^{*} \widetilde g + (I - T^{*} T) g \in 
\operatorname{Ran} (I - T^{*} T)^{\frac{1}{2}}
\end{align*}
which allows to conclude that $g \in \cH^{l}_{I - T^{*} T}$.  
The isometric equality \eqref{isomiden} has now been verified.
\end{proof}

\subsection{Spaces associated with Toeplitz operators}   
\label{S:Toeplitz}
From now on it will be assumed that all Hilbert spaces are separable.  
For $\cU$ a coefficient Hilbert space, let $L^{2}(\cU)$ denote the 
Hilbert space of weakly measurable norm-square integrable functions 
on the unit circle ${\mathbb T}$; in terms of Fourier series 
representation, one can write
$$ 
L^{2}(\cU) = \left\{ f(\zeta) \sim \sum_{n=-\infty}^{\infty} f_{n} 
\zeta^{n} \colon f_{n} \in \cU \text{ with } \| f \|^{2}_{L^{2}(\cU)} 
: = \sum_{n=-\infty}^{\infty} \| f_{n} \|^{2} < \infty \right\}.
$$
The vector-valued Hardy space $H^{2}(\cU)$ is the subspace of 
$L^{2}(\cU)$ consisting of functions $f(\zeta) = \sum_{n=0}^{\infty} 
f_{n} \zeta^{n}$ having $f_{n} = 0$ for $n<0$ and can also be viewed 
as the space of $\cU$-valued analytic functions on the unit disk ${\mathbb 
D}$ 
having $L^{2}$-norm along circles of radius $r$ uniformly bounded as 
$r \uparrow 1$.  Given two coefficient Hilbert spaces $\cU$ and 
$\cY$, let $L^{\infty}(\cU, \cY)$ denote the space of weakly 
measurable essentially bounded $\cL(\cU, \cY)$-valued functions on 
${\mathbb T}$ ($W \colon {\mathbb T} \to \cL(\cU, 
\cY)$).  Given $W \in L^{\infty}(\cU, \cY)$, let $L_{W} \colon 
L^{2}(\cU) \to L^{2}(\cY)$ denote the {\em Laurent operator} of 
multiplication by $W$ on vector-valued $L^{2}$:
$$
  L_{W} \colon f(\zeta) \mapsto W(\zeta) f(\zeta).
$$
The Toeplitz operator $T_{W}$ associated with $W$ is the compression 
of $L_{W}$ to the Hardy space:
$$
  T_{W} \colon f \mapsto P_{H^{2}(\cY)}( L_{W} f)\quad  \text{for}\quad
 f \in  H^{2}(\cU).
$$
Let $H^{\infty}(\cU, \cY)$ denote the subspace of 
$L^{\infty}(\cU, \cY)$ consisting of $W$ with negative Fourier 
coefficients vanishing:  $W(\zeta) \sim \sum_{n=0}^{\infty} W_{n} 
\zeta^{n}$; as in the vector-valued case, $W(\zeta)$ can be viewed as 
the almost everywhere existing nontangential weak-limit boundary value function 
of an operator-valued function $z \mapsto W(z)$ on the unit disk 
${\mathbb D}$ (here the separability assumption on the coefficient 
Hilbert spaces is invoked---see e.g.~\cite{RR} for details).  
For the case of $W \in H^{\infty}(\cU, \cY)$, 
the Toeplitz operator $T_{W}$ assumes the simpler form
$$
  T_{W} \colon f(\zeta) \mapsto W(\zeta) \cdot f(\zeta).
$$
In this case one says that $W$ is an {\em analytic Toeplitz operator} 
(see \cite{RR}).

\smallskip

The de Branges-Rovnyak spaces discussed in 
\cite[Appendix]{dBR1} and \cite{dBR2} amount to the special case of 
the constructions in Section \ref{S:pullback} above applied to the case where 
$\cH_{0} = H^{2}(\cU)$, $\cH = H^{2}(\cY)$ and $T$ is the analytic 
Toeplitz operator $T = T_{S}$.

\smallskip

An easy consequence of the characterization of uniqueness discussed above for 
pullback spaces is the following:  {\em two Schur-class functions $S \in \cS(\cU, \cY)$ 
and $S \in \cS(\cU', \cY)$ determine the same pullback space $\cM(S) = 
\cH^{p}_{T_{S}} = \cH^{p}_{T_{S'}}$ (and hence also the same de 
Branges-Rovnyak space $\cH(S) = \cH(S')$) if and only if there is a 
partially isometric multiplier $\alpha$ so that $S = S'\alpha$ and 
$S' = S \alpha^{*}$.} In particular, there is a choice of 
Beurling-Lax representer $S$ for a given $\cM = \cH^{p}_{T_{S}}$   with 
the additional property that
\begin{equation}   \label{normalize}
    \{ u \in \cU \colon S(z) u \equiv 0\} = \{0\}.
\end{equation}

In detail, the following 
identification of the de Branges-Rovnyak space $\cH(S)$ as a 
lifted-norm space holds.

\begin{proposition}   \label{P:dBR}
    For an $S\in\cS(\cU, \cY)$,  let 
    $\cH(S)$ be the de Branges-Rovnyak space as defined by  
    \eqref{def1HS} above. 
    Then $\cH(S)$ is isometrically equal to the lifted norm space
    \begin{equation}   \label{def2HS}
      \cH(S) = \cH^{l}_{I-T_{S} T_{S}^{*}}.
    \end{equation}
    Equivalently, if $\cM(S)$ denotes the pullback space
    $$
      \cM(S): = \cH^{p}_{T_{S}},
    $$
    then $\cH(S)$ is the Brangesian complementary space 
    $(\cM(S))^{[\perp]}$ to $\cM(S)$ in $H^{2}(\cY)$.
    \end{proposition}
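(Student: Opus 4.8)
The plan is to recognize \eqref{def1HS} as nothing more than the intrinsic description of the Brangesian complement of the pullback space $\cM(S)=\cH^{p}_{T_{S}}$ inside $H^{2}(\cY)$, and then quote the abstract results already assembled. First I would observe that, since $S$ is analytic, $T_{S}$ is the analytic Toeplitz operator, so $T_{S}g=S\cdot g$ for every $g\in H^{2}(\cU)$; hence the quantity $\|f+Sg\|^{2}_{H^{2}(\cY)}-\|g\|^{2}_{H^{2}(\cU)}$ in \eqref{def1HS} is exactly $\|f+T_{S}g\|^{2}_{H^{2}(\cY)}-\|g\|^{2}_{H^{2}(\cU)}$. (Here $f$ is to be read as an element of $H^{2}(\cY)$, consistent with the reproducing kernel $K_{S}$ taking values in $\cL(\cY)$ and with the special case $n=0$ above.) Being the compression of a Laurent operator of norm $\|S\|_{L^{\infty}}\le1$, $T_{S}$ is a contraction, so the pullback constructions of Section~\ref{S:pullback} apply with $\cH_{0}=H^{2}(\cU)$, $\cH=H^{2}(\cY)$, $T=T_{S}$.

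Next I would invoke the identifications already in hand: $\cM(S)=\cH^{p}_{T_{S}}$ is isometrically equal to the lifted-norm space $\cH^{l}_{T_{S}T_{S}^{*}}$, so with $P:=T_{S}T_{S}^{*}$ and $Q:=I-P$ one has $\cM(S)=\cH^{l}_{P}$ and, by definition, $(\cM(S))^{[\perp]}=\cH^{l}_{Q}=\cH^{l}_{I-T_{S}T_{S}^{*}}$, which is the space appearing in \eqref{def2HS}. By Corollary~\ref{C:Brangesian} (equivalently, by the symmetric roles of $\cM$ and $\cM^{[\perp]}$ in Proposition~\ref{P:Brangesian}(2)) this complementary space is characterized intrinsically inside $H^{2}(\cY)$ as
$$
\cH^{l}_{I-T_{S}T_{S}^{*}}=\Big\{\,f\in H^{2}(\cY)\colon \sup_{w\in\cM(S)}\big(\|f+w\|^{2}_{H^{2}(\cY)}-\|w\|^{2}_{\cM(S)}\big)<\infty\,\Big\},
$$
with the $\cH^{l}_{I-T_{S}T_{S}^{*}}$-norm of $f$ equal to that supremum.

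It then remains only to match this supremum over $w\in\cM(S)=\operatorname{Ran}T_{S}$, taken with the pullback norm $\|T_{S}g\|_{\cM(S)}=\|\bQ g\|_{H^{2}(\cU)}$ (where $\bQ$ is the orthogonal projection of $H^{2}(\cU)$ onto $(\operatorname{Ker}T_{S})^{\perp}=\overline{\operatorname{Ran}}\,T_{S}^{*}$), against the supremum over all $g\in H^{2}(\cU)$ in \eqref{def1HS}. This is the one substantive step, but it is elementary: $f+T_{S}g$ depends only on $\bQ g$ because $(I-\bQ)g\in\operatorname{Ker}T_{S}$, while $\|g\|^{2}_{H^{2}(\cU)}=\|\bQ g\|^{2}+\|(I-\bQ)g\|^{2}\ge\|\bQ g\|^{2}=\|T_{S}g\|^{2}_{\cM(S)}$; hence passing from $g$ to $\bQ g$ never decreases $\|f+T_{S}g\|^{2}-\|g\|^{2}_{H^{2}(\cU)}$, and on $(\operatorname{Ker}T_{S})^{\perp}$ this quantity is precisely $\|f+w\|^{2}-\|w\|^{2}_{\cM(S)}$ with $w=T_{S}g$. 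Since every $w\in\operatorname{Ran}T_{S}$ is attained as $T_{S}g$ for some $g$, the two suprema agree for each $f\in H^{2}(\cY)$. Combined with the displayed characterization, this shows the set \eqref{def1HS} coincides with $\cH^{l}_{I-T_{S}T_{S}^{*}}$ and that the $\cH(S)$-norm equals the $\cH^{l}_{I-T_{S}T_{S}^{*}}$-norm, proving \eqref{def2HS}; the ``equivalently'' clause is then immediate since $\cH^{l}_{I-T_{S}T_{S}^{*}}=(\cH^{p}_{T_{S}})^{[\perp]}=(\cM(S))^{[\perp]}$ by definition of the Brangesian complement. The only place where care is needed is this fiberwise minimization over $\operatorname{Ker}T_{S}$; everything else is bookkeeping with the lifted-norm/pullback machinery already established.
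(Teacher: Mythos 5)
Your proposal is correct and follows exactly the route the paper takes: its proof of Proposition \ref{P:dBR} consists of the single remark that the claim follows from the definition \eqref{def1HS} together with Proposition \ref{P:Brangesian} and the pullback/lifted-norm equivalence of Section \ref{S:pullback}. You have simply supplied the details the paper leaves implicit, most usefully the verification that the supremum over all $g\in H^{2}(\cU)$ in \eqref{def1HS} agrees with the supremum over $w\in\cM(S)$ in \eqref{Mperp} via minimization over $\operatorname{Ker}T_{S}$ (and the observation that $f$ in \eqref{def1HS} should be read as an element of $H^{2}(\cY)$).
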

    
    \begin{proof}  All this can be seen from the definition of the 
	$\cH(S)$ norm in \eqref{def1HS} combined with Proposition 
	\ref{P:Brangesian} and the equivalence between pullback 
	spaces and lifted norm spaces as explained in Section 
	\ref{S:pullback}.
\end{proof}

The next result indicates how one can get parts (2) and (3) in Theorem 
\ref{T:H(S)} using the lifted-norm characterization of $\cH(S)$.  The 
key tool for this task is the following fundamental result of Douglas.

\begin{proposition}  \label{P:Douglas} (See \cite{Douglas65}.) Given two Hilbert space 
    operators $A \in \cL(\cH_{1}, \cH_{2})$ and $B \in \cL(\cH_{0}, 
    \cH_{2})$, there exists a contraction operator $X \in \cL(\cH_{0}, 
    \cH_{1})$ with $AX=B$ if and only if the operator inequality
    $ BB^{*} \le A A^{*}$ holds.
\end{proposition}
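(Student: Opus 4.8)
The plan is to establish the easy implication by a one-line computation and the substantive one by the classical densely-defined-map construction.

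First, suppose there is a contraction $X \in \cL(\cH_{0}, \cH_{1})$ with $AX = B$. Then for every $h \in \cH_{2}$ one has $\langle BB^{*}h, h\rangle = \|B^{*}h\|^{2} = \|X^{*}A^{*}h\|^{2} \le \|A^{*}h\|^{2} = \langle AA^{*}h, h\rangle$, since $\|X^{*}\| = \|X\| \le 1$. As $h$ is arbitrary, this is precisely $BB^{*} \le AA^{*}$.

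For the converse, assume $BB^{*} \le AA^{*}$. The key observation is that this inequality is equivalent to $\|B^{*}h\|^{2} = \langle BB^{*}h, h\rangle \le \langle AA^{*}h, h\rangle = \|A^{*}h\|^{2}$ for all $h \in \cH_{2}$, and this single estimate says simultaneously that the assignment $A^{*}h \mapsto B^{*}h$ is (i) well-defined on $\operatorname{Ran} A^{*} \subseteq \cH_{1}$ (if $A^{*}h = A^{*}h'$ then $A^{*}(h-h') = 0$ forces $B^{*}(h-h') = 0$) and (ii) contractive there. I would then extend this map by continuity to $\overline{\operatorname{Ran}} A^{*}$ and by $0$ on the orthogonal complement $(\operatorname{Ran} A^{*})^{\perp} = \operatorname{Ker} A$, obtaining a contraction $Z \in \cL(\cH_{1}, \cH_{0})$ satisfying $ZA^{*} = B^{*}$ (this identity holds on all of $\cH_{2}$ because $A^{*}h$ always lies in $\operatorname{Ran} A^{*}$). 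Setting $X := Z^{*} \in \cL(\cH_{0}, \cH_{1})$ then gives $\|X\| = \|Z\| \le 1$ and $(AX)^{*} = X^{*}A^{*} = ZA^{*} = B^{*}$, hence $AX = B$, as required.

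The only points needing care — and they really are the heart of the matter — are that the one operator inequality delivers both well-definedness and contractivity of $A^{*}h \mapsto B^{*}h$, and that extending by zero off $\overline{\operatorname{Ran}} A^{*}$ preserves contractivity without disturbing the intertwining $ZA^{*} = B^{*}$; both are immediate once the decomposition $\cH_{1} = \overline{\operatorname{Ran}} A^{*} \oplus \operatorname{Ker} A$ is invoked, so I do not anticipate any genuine obstacle and the argument stays short. (Alternatively one could construct $X$ from the polar decomposition of $A^{*}$ together with a functional-calculus argument, but the densely-defined-map route above is the most economical.)
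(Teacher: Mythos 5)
Your proof is correct. The paper does not actually prove Proposition \ref{P:Douglas} --- it is stated without proof and attributed to \cite{Douglas65} --- so there is nothing internal to compare against; your argument (the easy direction via $\|X^*\|\le 1$, and the converse via the densely defined contraction $A^*h \mapsto B^*h$ on $\operatorname{Ran} A^*$, extended by continuity and by zero on $\operatorname{Ker} A$, then adjointed) is precisely the classical argument from Douglas's paper, and all the steps, including the simultaneous well-definedness and contractivity drawn from the single inequality $\|B^*h\| \le \|A^*h\|$, check out.
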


For $\cX$ any coefficient Hilbert space, let $\bS_{\cX}$ denote the 
unilateral shift operator of multiplicity $\operatorname{dim} \cX$ as 
modeled on $H^{2}(\cX)$:
$$
   \bS_{\cX} \colon f(z) \mapsto z f(z)\quad \text{for}\quad f \in 
H^{2}(\cX).
$$
Then it is easily verified that its adjoint is given by the 
difference-quotient transformation:
$$
\bS_{\cX}^{*} \colon f(z) \mapsto \frac{f(z) - f(0)}{z} 
\quad \text{for}\quad f \in H^{2}(\cX).
$$
Now part (2) (apart from an analysis of when equality holds 
in\eqref{norm-est} which will come later) and part (3) of Theorem \ref{T:H(S)} 
can be verified as follows.

\begin{theorem}   \label{T:H(S)23}
    Let $S$ be a Schur-class operator-valued function in $\cS(\cU, 
    \cY)$.  Then:
 \begin{enumerate}
     \item[(2)]  The space $\cH(S)$ is invariant under the 
     difference-quotient transformation $\bS_{\cY}^{*}$ with 
     \begin{equation}  \label{normest}
       \|\bS_{\cY}^{*} f\|^{2}_{\cH(S)} \le \| f \|^{2}_{\cH(S)} - 
       \|f(0)\|^{2}_{\cH(S)}.
     \end{equation}

 \item[(3)]  For any vector $u \in \cU$, the function $\frac{ S(z) - 
 S(0)}{z} u$ belongs to $\cH(S)$.  Let $R_{0} \colon \cH(S) \to 
 \cH(S)$ and $\tau  \colon \cU \to \cH(S)$  be the operators $R_{0} = 
\bS_{\cY}^{*}|_{\cH(S)}$ and 
$\tau \colon u \mapsto  \frac{S(z) - S(0)}{z} u$. Then $R_{0}^{*} \in \cL(\cH(S))$ is 
given by
 \begin{equation}   \label{RS*1}
R_{0}^{*} \colon f(z) \mapsto z f(z) - S(z) \cdot \tau^{*}(f)
 \end{equation}
 with the norm of $\tau^{*}(f)$ given by
 \begin{equation}   \label{normtau*}
 \| \tau^{*}(f)\|^{2}_{\cU} = \| f \|^{2}_{\cH(S)} - \| R_{0}^{*}f\|^{}_{\cH(S)}.
 \end{equation}
\end{enumerate}
\end{theorem}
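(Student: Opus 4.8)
The plan is to derive everything from the identification $\cH(S) = \cH^{l}_{I - T_S T_S^{*}}$ of Proposition \ref{P:dBR}, together with Douglas's factorization lemma (Proposition \ref{P:Douglas}) and the lifted-norm machinery of Section \ref{S:pullback}. Write $P = I - T_S T_S^{*}$ and $Q = T_S T_S^{*}$, so that $\cH(S) = \cH^{l}_P$ and, by Proposition \ref{P:Brangesian} applied with these $P,Q$, the $\cH(S)$-norm of $f = P^{1/2}f_1$ (with $f_1 \in \overline{\operatorname{Ran}}\,P$) equals $\|f_1\|_{H^2(\cY)}$, and one has $P^{1/2}Q^{1/2} = Q^{1/2}P^{1/2}$. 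First I would establish invariance under $\bS_\cY^*$ and the norm estimate \eqref{normest}. The key observation is the operator identity $\bS_\cY^* T_S = T_S \bS_\cU^*$ (both equal $f \mapsto (S(z)f(z) - S(0)f(0))/z$ on $H^2(\cU)$; this is just the fact that $T_S$ is an analytic Toeplitz operator), hence $\bS_\cY^*(T_S T_S^*) = T_S T_S^* \bS_\cY^*$ would \emph{not} quite hold, so instead I use $\bS_\cY^*(I - T_S T_S^*)\bS_\cY = \bS_\cY^*\bS_\cY - \bS_\cY^* T_S T_S^* \bS_\cY = I - T_S \bS_\cU^* \bS_\cU T_S^* = I - T_S(I - \be_\cU(0)^*\be_\cU(0))T_S^*$. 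Writing $\be(0)$ for evaluation at $0$ and $\bS_\cY \bS_\cY^* = I - \be_\cY(0)^*\be_\cY(0)$, one gets $\bS_\cY^* P \bS_\cY = P + \be_\cY(0)^*\be_\cY(0)$ modulo the rank-one correction coming from $T_S \be_\cU(0)^*\be_\cU(0)T_S^* = S(0)S(0)^*$ acting at the constant level; more precisely, $\bS_\cY^* P \bS_\cY = I - T_S T_S^* + T_S \be_\cU(0)^* \be_\cU(0) T_S^* \ge P$. This inequality $\bS_\cY^* P \bS_\cY \ge P$ together with $P \ge P^2$ lets one run the standard argument: given $f = P^{1/2}f_1 \in \cH(S)$, one shows $\bS_\cY^* f \in \operatorname{Ran} P^{1/2}$ and estimates its lifted norm.

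The cleanest route for part (2) is actually to package it through part (3), so I would prove (3) first. For (3), one wants $R_0^*$, the adjoint of $R_0 = \bS_\cY^*|_{\cH(S)}$ computed in the $\cH(S)$-norm. The map $\tau\colon u \mapsto (S(z)-S(0))/z\,u = \bS_\cY^* T_S u = T_S \bS_\cU^* u$ lands in $\operatorname{Ran} T_S \cap \cH(S)$; to see $\tau u \in \cH(S)$ one checks $\tau u \in \operatorname{Ran} P^{1/2}$, e.g. by verifying $\|\tau u\|^2_{\cH(S)} < \infty$ directly from \eqref{def1HS}, or by the operator computation $(I - T_S T_S^*)^{1/2}$ has the same range as $\cdots$ — here I would use Douglas: since $\tau u$ is in the range of a suitable contraction times $T_S$, and $P^{1/2} = (I - T_S T_S^*)^{1/2}$, a rank-one/finite-rank perturbation argument gives $\tau u \in \operatorname{Ran} P^{1/2}$. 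Then to identify $R_0^*$, I compute, for $f \in \cH(S)$ and $u \in \cU$, the pairing $\langle \bS_\cY^* f, \tau u\rangle$? No — rather, I guess the formula \eqref{RS*1}: $R_0^* f = \bS_\cY f - T_S \tau^*(f)$ and verify it by checking $\langle R_0^* f, g\rangle_{\cH(S)} = \langle f, R_0 g\rangle_{\cH(S)}$ for all $g$. The verification uses: $\bS_\cY f - T_S\tau^* f$ is actually in $\cH(S)$ (this needs an argument — it is $zf(z)$ corrected by a term in $\operatorname{Ran} T_S$ so that the whole thing lies in $\operatorname{Ran} P^{1/2}$), and then the lifted-norm inner product formula from Proposition \ref{P:Brangesian}.

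For the norm identity \eqref{normtau*}, write $f = P^{1/2}f_1$ with $f_1 \in \overline{\operatorname{Ran}}\,P$, so $\|f\|^2_{\cH(S)} = \|f_1\|^2$. One computes $R_0^* f = \bS_\cY P^{1/2} f_1 - T_S \tau^*(f) = P^{1/2} g_1$ for an explicit $g_1$ obtained from the intertwining $\bS_\cY P = (\bS_\cY P^{1/2})P^{1/2}$ and the relation $\bS_\cY^* \bS_\cY = I$, and then $\|R_0^* f\|^2_{\cH(S)} = \|g_1\|^2 = \|f_1\|^2 - \|\tau^*(f)\|^2$ by a Pythagorean computation exactly parallel to \eqref{need1}: the cross terms cancel because of $P^{1/2}Q^{1/2} = Q^{1/2}P^{1/2}$ and $T_S^* = $ (something) $Q^{1/2}$ modulo partial isometry. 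Once \eqref{normtau*} is in hand, part (2) follows: $\|R_0 f\|^2_{\cH(S)} = \|R_0 R_0^* (\cdot)\|$... no, more simply, one re-derives \eqref{normest} by applying the same lifted-norm computation to $R_0 = \bS_\cY^*$ directly, using $\bS_\cY^* P \bS_\cY \ge P$ from the first paragraph to produce the preimage $g_1$ and the inequality $\|R_0 f\|^2 \le \|f\|^2 - \|f(0)\|^2$, where the $\|f(0)\|^2$ term comes from $\bS_\cY \bS_\cY^* = I - \be_\cY(0)^*\be_\cY(0)$ and $\be_\cY(0)P^{1/2}f_1 = f(0)$.

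\textbf{Main obstacle.} The delicate point throughout is not the algebra of the Pythagorean identities — those are mechanical once the right vectors are named — but the \emph{range inclusions}: proving that $\bS_\cY^* f$, that $\tau u$, and that $\bS_\cY f - T_S\tau^*(f)$ all actually lie in $\operatorname{Ran} P^{1/2} = \operatorname{Ran}(I - T_S T_S^*)^{1/2}$, and controlling which vector in $\overline{\operatorname{Ran}}\,P$ represents them (so that the lifted-norm formula $\|P^{1/2}x\|_{\cH(S)} = \|\bQ x\|$ can be applied with the correct $\bQ x$). This is exactly where Douglas's lemma (Proposition \ref{P:Douglas}) earns its keep: each required inclusion $h \in \operatorname{Ran} P^{1/2}$ is equivalent to an operator inequality $h h^* \le c\, P$ (via the factorization form $h = P^{1/2}x$), and one verifies that inequality by a short computation with $T_S$, $\bS_\cY$, and the rank-one operators $\be(0)^*\be(0)$. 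I expect most of the proof's length to be consumed in setting up these three inclusions cleanly; after that, (2), \eqref{RS*1}, and \eqref{normtau*} drop out together.
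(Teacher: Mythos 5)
Your overall strategy --- identify $\cH(S)$ with the lifted-norm space $\cH^{l}_{I-T_{S}T_{S}^{*}}$ and use Douglas's lemma (Proposition \ref{P:Douglas}) to establish the needed range inclusions --- is exactly the paper's, and your closing paragraph correctly locates where the real work lies. But the central operator identities on which you build are wrong, and not harmlessly so. First, $\bS_{\cY}^{*}T_{S} \ne T_{S}\bS_{\cU}^{*}$: the left side sends $f$ to $[S(z)f(z)-S(0)f(0)]/z$, but the right side sends $f$ to $S(z)[f(z)-f(0)]/z = [S(z)f(z)-S(z)f(0)]/z$; their difference is $f \mapsto \frac{S(z)-S(0)}{z}\,f(0) = \tau(f(0))$. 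This commutator being \emph{nonzero} is the whole content of part (3): if your identity held, then $\tau u = T_{S}\bS_{\cU}^{*}u = 0$ for every constant $u$ and there would be nothing to prove. (The paper's proof of (3) consists precisely in showing that the commutator $C=\bS_{\cY}^{*}T_{S}-T_{S}\bS_{\cU}^{*}$ satisfies $CC^{*}\le I-T_{S}T_{S}^{*}$ and then invoking Douglas.) Second, you use $\bS_{\cU}^{*}\bS_{\cU}=I-\be_{\cU}(0)^{*}\be_{\cU}(0)$; in fact $\bS_{\cU}$ is an isometry, so $\bS_{\cU}^{*}\bS_{\cU}=I$, and it is $\bS_{\cU}\bS_{\cU}^{*}$ that equals $I$ minus the projection onto the constants.

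These two errors produce the inequality $\bS_{\cY}^{*}P\bS_{\cY}\ge P$ (with $P=I-T_{S}T_{S}^{*}$), which is false and, worse, points the wrong way for your intended application: to conclude $\operatorname{Ran}(\bS_{\cY}^{*}P^{1/2})\subseteq\operatorname{Ran}P^{1/2}$ from Douglas you need $(\bS_{\cY}^{*}P^{1/2})(\bS_{\cY}^{*}P^{1/2})^{*}=\bS_{\cY}^{*}P\bS_{\cY}\le cP$, i.e.\ an upper bound by $P$, not a lower one. (The correct relation is $\bS_{\cY}^{*}P\bS_{\cY}=P-\bS_{\cY}^{*}T_{S}P_{0}^{\cU}T_{S}^{*}\bS_{\cY}\le P$, where $P_{0}^{\cU}$ is the projection of $H^{2}(\cU)$ onto the constants; the paper instead applies Douglas to the row operator $\bigl[\,\bS_{\cY}P^{1/2}\;\;P_{0}\,\bigr]$ so as to extract the $\|f(0)\|^{2}$ term in \eqref{normest} at the same time.) Beyond this, the remaining steps --- that $\tau u$ and $zf(z)-S(z)\tau^{*}(f)$ lie in $\operatorname{Ran}P^{1/2}$, and the identity \eqref{normtau*} --- are only gestured at; for \eqref{normtau*} the paper avoids your Pythagorean bookkeeping entirely by computing $(I-R_{0}R_{0}^{*})f=\tau\tau^{*}(f)$ directly from \eqref{RS*1} and pairing with $f$. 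So the blueprint is right, but as written the proof does not go through.
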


The following proof  synthesizes arguments from \cite{NV0} and 
     \cite{SarasonBieberbach}.

\begin{proof}[Proof of (2).]
View $\cH(S)$ as $\operatorname{Ran} (I - T_{S}T_{S}^{*})^{\frac{1}{2}}$ and  
 introduce the notation
$$
  P_{0} = I - \bS_{\cY}\bS_{\cY}^{*}
$$
for the projection onto the constant functions in $H^{2}(\cY)$.
Next observe the identity
\begin{align*}
 & \begin{bmatrix} \bS_{\cY} (I - T_{S} T_{S}^{*})^{\frac{1}{2}}  & P_{0} 
\end{bmatrix}  \begin{bmatrix} (I - T_{S} T_{S}^{*})^{\frac{1}{2}} S^{*} \\ 
P_{0} \end{bmatrix} \\
& \quad =   \bS_{\cY}(I - T_{S}T_{S}^{*}) 
\bS_{\cY}^{*} + I - \bS_{\cY} \bS_{\cY}^{*} = I - T_{S} T_{S}^{*}.
\end{align*}
Then by the Douglas criterion (Proposition \ref{P:Douglas}), there is 
a contraction operator $\left[ \begin{smallmatrix} X \\ Y 
\end{smallmatrix} \right]$ so that
$$
\begin{bmatrix} \bS_{\cY} (I - T_{S} T_{S}^{*})^{\frac{1}{2}} & P_{0} \end{bmatrix} 
    \begin{bmatrix} X \\ Y \end{bmatrix} = (I - T_{S} 
	T_{S}^{*})^{\frac{1}{2}}.
$$
Multiplying on the left by $\bS_{\cY}^{*}$ and then by $P_{0}$ 
successively breaks this up into the pair of equations
\begin{equation}
 (I - T_{S}T_{S}^{*})^{\frac{1}{2}} X = \bS_{\cY}^{ *} (I - T_{S}
    T_{S}^{*})^{\frac{1}{2}}, \qquad
    P_{0} Y   = P_{0} (I - T_{S} T_{S}^{*})^{\frac{1}{2}}.
  \label{pair}
\end{equation}
The first equation in \eqref{pair} reveals that $\cH(S)$ is 
invariant under $\bS_{\cY}^{*}$ and 
$$
\|\bS_{\cY}^{*} f\|^{2}_{\cH(S)}= \| X f_{1}\|^{2}_{H^{2}(\cY)}
$$
(assuming that it is arranged that $\operatorname{Ran} X \subset 
\overline{\operatorname{Ran}} (I - T_{S}T_{S}^{*})$ which is always 
possible).  Moreover,
the fact that $\left[ \begin{smallmatrix} X \\ Y \end{smallmatrix} 
\right]$ is a contraction implies that $Y$ has the form $Y = K (I - 
X^{*} X)^{\frac{1}{2}}$ with $K$ a contraction, and hence
$$
  P_{0}K(I - X^{*} X)^{\frac{1}{2}}  = P_{0} (I - T_{S} T_{S}^{*})^{\frac{1}{2}}.
$$
Then from the second equation in \eqref{pair} one gets
\begin{align*}
    \| f(0)\|^{2}_{\cY} & = \| P_{0} (I - T_{S} T_{S}^{*})^{\frac{1}{2}} 
    f_{1} \|^{2} _{\cY} \\
    & \le \| (I - X^{*} X)^{\frac{1}{2}} f_{1} \|^{2}_{H^{2}(\cY)} \\
    & = \| f_{1} \|^{2}_{H^{2}(\cY)} - \| X f_{1} \|^{2}_{H^{2}(\cY)} 
    = \| f \|^{2}_{\cH(S)} - \|\bS_{\cY}^{*} f \|^{2}_{\cH(S)},
\end{align*}
and the norm estimate \eqref{normest} follows.
 \end{proof}
 
 \begin{proof}[Proof of (3).]  Note that the subspace $\left\{ 
     \frac{S(z) - S(0)}{z} u \colon u \in \cU\right\}$ is the range 
     of the commutator operator $\bS_{\cY}^{*} T_{S} - T_{S} 
     \bS_{\cY}^{*}$.  Hence to show that $\frac{S(z) - S(0)}{z} u \in 
     \cH(S)$ for each $u \in \cU$, it suffices to show that 
     $\operatorname{Ran} (I - T_{S} T_{S}^{*})^{\frac{1}{2}}$ is invariant 
     under the commutator $\bS_{\cY}^{*} T_{S} - T_{S} 
     \bS_{\cY}^{*}$.  Again by Proposition 
     \ref{P:Douglas}, it suffices to show that 
 \begin{equation}   \label{Douglasineq}
 ( \bS_{\cY}^{*} T_{S} - T_{S} \bS_{\cY}^{*}) (\bS_{\cY}^{*} T_{S} - 
 T_{S}\bS_{\cY}^{*})^{*} \le I - T_{S} T_{S}^{*}.
 \end{equation}
It is readily seen that the left hand side of 
 \eqref{Douglasineq} is equal to $\bS_{\cY}^{*} T_{S}^{*} T_{S}^{*} 
 \bS_{\cY} - T_{S} T_{S}^{*}$, so \eqref{Douglasineq} does hold.

\smallskip
 
 Finally, the formula for $R_{0}^{*}$ can be verified as follows.  Assume 
 first that $h \in \cH(S)$ has the special form $h = (I - T_{S} T_{S}^{*}) 
 h_{1}$ for some $h_{1} \in H^{2}(\cY)$.  Then the computation
 \begin{align*}
     \langle R_{0} g, h \rangle_{\cH(S)} & = \langle \bS_{\cY}^{*} g, 
     h \rangle_{\cH(S)}   = \langle \bS_{\cY}^{*} g, h_{1} \rangle_{H^{2}(\cY)} 
      = \langle g, \bS_{\cY} h_{1} \rangle_{H^{2}(\cY)} \\ 
    &   = \langle g, (I - T_{S} T_{S}^{*}) \bS_{\cY} h_{1} 
     \rangle_{\cH(S)}
 \end{align*}
shows that 
 \begin{equation}  \label{RS*2}
 R_{0}^{*} h = (I - T_{S} T_{S}^{*}) \bS_{\cY} h_{1} = \bS_{\cY} h - 
 T_{S}(T_{S}^{*} \bS_{\cY} - \bS_{\cY} T_{S}^{*}) h_{1}.
 \end{equation}
 It is easily verified that
 $$
\bS_{\cY}^{*} T_{S} - T_{S} \bS_{\cY}^{*}  \colon h_{1}(z) \to 
 \frac{S(z) - S(0)}{z} h_{1}(0) = \tau ( h_{1}(0))
 $$
 and hence the adjoint action must have the form
 $$
 (T_{S}^{*} \bS_{\cY} - \bS_{\cY} T_{S}^{*}) h_{1} = \widetilde h_{1}(0)
 $$
 where the constant $\widetilde h_{1}(0) \in \cU$ is determined by
 \begin{align*}
 \langle \widetilde h_{1}(0), u \rangle_{\cU} & = \langle h_{1}, 
 \tau(u) \rangle_{H^{2}(\cY)}  = \langle (I - T_{S} T_{S}^{*}) h_{1}, 
 \tau(u) \rangle_{\cH(S)} = \langle h, \tau(u) \rangle_{\cH(S)} \\
 & = \langle \tau^{*}(h), u \rangle_{\cU}
 \end{align*}
 where the adjoint is with respect to the $\cH(S)$ inner product on 
 the range of $\tau$.  Therefore, $\widetilde h_{1}(0) = 
 \tau^{*}(h)$ and the formula \eqref{RS*1} for $R_{0}^{*}$ is now an 
 immediate consequence of \eqref{RS*2} for the case where $f$ has the 
 special form $f = (I - T_{S} T_{S}^{*}) f_{1}$.  But elements in 
 $\cH(S)$ of this special form are dense in $\cH(S)$ so the general 
 case of \eqref{RS*1} now follows by taking limits.
 
\smallskip

 The next task is the computation of the action of $I - R_{0} R_{0}^{*}$ on a general 
 element $f$ of $\cH(S)$:
 \begin{align*}
 (I - R_{0} R_{0}^{*})f &=f(z) - \frac{1}{z} [ z 
     f(z) - S(z) \tau^{*}(f) + S(0) \tau^{*}(f) ] \\
     & = \frac{S(z) - S(0)}{z} \tau^{*}(f) = \tau \tau^{*}(f).
 \end{align*}
Therefore,
 $$
 \| f \|^{2}_{\cH(S)} - \| R_{0}^{*} f \|^{2}_{\cH(S)}  =
 \langle (I - R_{0} R_{0}^{*}) f, f \rangle_{\cH(S)} 
 = \langle \tau \tau^{*}(f), f \rangle_{\cH(S)} = \| \tau^{*}(f) 
 \|^{2}_{\cU}
$$  
and the identity \eqref{normtau*} follows.
 \end{proof}
 
 The next goal is to show that the estimate \eqref{normest} is enough to verify 
 one direction of part (4) in Theorem \ref{T:H(S)}.
 
 \begin{theorem}   \label{T:cni}
     Let $S$ be in $\cS(\cU, \cY)$ with 
     associated de Branges-Rovnyak space $\cH(S)$ and model operator 
     $R_{0}: = \bS_{\cY}^{*}|_{\cH(S)}$.  Then $R_{0}$ is completely 
     non-isometric, i.e., if $f \in \cH(S)$ is such that
     $\| R_{0}^{n} f \|_{\cH(S)} = \| f \|_{\cH(S)}$ for 
     $n=0,1,2,\dots$, then $f = 0$.
 \end{theorem}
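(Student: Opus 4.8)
The plan is to exploit the norm estimate \eqref{normest} from Theorem \ref{T:H(S)23}(2), namely $\|R_{0}f\|^{2}_{\cH(S)} \le \|f\|^{2}_{\cH(S)} - \|f(0)\|^{2}_{\cY}$, and iterate it. Suppose $f \in \cH(S)$ satisfies $\|R_{0}^{n}f\|_{\cH(S)} = \|f\|_{\cH(S)}$ for all $n \ge 0$. Applying \eqref{normest} with $f$ replaced by $R_{0}^{n}f$ gives
\begin{equation*}
\|R_{0}^{n+1}f\|^{2}_{\cH(S)} \le \|R_{0}^{n}f\|^{2}_{\cH(S)} - \|(R_{0}^{n}f)(0)\|^{2}_{\cY}.
\end{equation*}
Since the left and right leading terms are both equal to $\|f\|^{2}_{\cH(S)}$ by hypothesis, this forces $(R_{0}^{n}f)(0) = 0$ for every $n \ge 0$. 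But $R_{0} = \bS_{\cY}^{*}|_{\cH(S)}$ is the difference-quotient operator, so $(R_{0}^{n}f)(z)$ is, up to the factor $z^{-n}$, the function $f$ with its first $n$ Taylor coefficients removed; concretely, if $f(z) = \sum_{k\ge 0} c_{k}z^{k}$, then $(R_{0}^{n}f)(0) = c_{n}$. Hence all Taylor coefficients $c_{n}$ of $f$ vanish, i.e.\ $f \equiv 0$.

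The one point requiring a little care is the justification that $(R_{0}^{n}f)(0) = c_{n}$, together with the fact that membership in $\cH(S)$ and the boundedness of point evaluations (guaranteed because $\cH(S)$ is a reproducing kernel Hilbert space, Theorem \ref{T:H(S)}(1)) make the identification of $R_{0}^{n}f$ with this shifted Taylor series legitimate inside $\cH(S)$. Since $\cH(S) \subset H^{2}(\cU)$ with a stronger norm, the Taylor-coefficient bookkeeping for $\bS_{\cY}^{*}$ carried out in $H^{2}$ transfers verbatim, and the invariance of $\cH(S)$ under $R_{0}$ established in Theorem \ref{T:H(S)23}(2) ensures each $R_{0}^{n}f$ genuinely lies in $\cH(S)$ so that the chain of inequalities is meaningful.

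I do not expect a serious obstacle here: the argument is essentially a telescoping of \eqref{normest}. The only thing to be vigilant about is not to conflate the two possible meanings of "completely non-isometric" — the displayed definition in Theorem \ref{T:H(S)}(5) is stated as $\bigcap_{n\ge 0}\{x : \|T^{n}x\| = \|x\|\} = \{0\}$, and what we verify is exactly that the vector $f$ lying in this intersection must be zero. Thus Theorem \ref{T:cni} supplies the "only if" half (the easy direction) of the equivalence in Theorem \ref{T:H(S)}(4)–(5); the converse, that every completely non-isometric contraction arises this way, is a separate model-theoretic construction not addressed by this theorem.
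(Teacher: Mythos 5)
Your argument is correct and is essentially identical to the paper's proof: both apply the norm estimate \eqref{normest} to $R_{0}^{n}f$, use the hypothesis to force $(R_{0}^{n}f)(0)=f_{n}=0$ for all $n$, and conclude $f=0$. No gaps; the identification $(R_{0}^{n}f)(0)=f_{n}$ is exactly the observation the paper makes as well.
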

 
 \begin{proof}
     Suppose that $f(z) = \sum_{n=0}^{\infty} f_{n} z^{n}$ is in 
     $\cH(S)$ with $\|R_{0}^{n} f \| = \| 
     f \|$ for all $n=1,2,3, \dots$.  Then in particular, from the 
     observation that $f_{n} = (R_{0}^{n} f)(0)$ together with 
     \eqref{normest} one gets
 $$
   \| f_{n} \|^{2}  = \| ( R_{0}^{n}f)(0) \|^{2} _{\cY} \le 
   \| R_{0}^{n} f \|^{2}_{\cH(S)} - \| R_{0}^{n+1} f \|^{2}_{\cH(S)} 
   = 0
 $$
 and hence $f_{n} = 0$ for all $n=0,1,2, \dots$, i.e., $f = 0$.
 \end{proof}
 
 \subsection{Reproducing kernel Hilbert spaces}  \label{S:RKHS}
 A {\em reproducing kernel Hilbert space} (RKHS) by 
 definition is a Hilbert space whose elements are functions on some set $\Omega$ 
 with values in a coefficient Hilbert space, say $\cY$, such that the 
 evaluation map $\be(\omega) \colon f \mapsto f(\omega)$ is 
 continuous from $\cH$ into $\cY$ for each $\omega \in \Omega$.
 Associated with any such space is a positive $\cL(\cY)$-valued 
 kernel on $\Omega$, i.e., a function $K \colon \Omega \times \Omega 
 \to \cL(\cY)$ with the positive-kernel property
 \begin{equation}  \label{posker}
 \sum_{i,j=1}^{N} \langle K(\omega_{i}, \omega_{j}) y_{j}, y_{i} 
 \rangle_{\cY} \ge 0 
 \end{equation}
for any choice of finitely many points $\omega_{1}, \dots, \omega_{N}
 \in \Omega$ and vectors $y_{1}, \dots, y_{N} \in \cY$,
which ``reproduces'' the values of the functions in $\cH$ in the sense 
 that
 \begin{enumerate}
     \item[(i)] the function $\omega \mapsto K(\omega, \zeta) y$ 
     is in $\cH$ for each $\zeta \in \Omega$ and $y \in \cY$, and
     \item[(ii)] the reproducing formula 
     $$
       \langle f, K(\cdot, \zeta) y \rangle_{\cH} = \langle 
       f(\zeta), y \rangle_{\cY}
       $$
  holds for all $f \in \cH$, $\zeta \in \Omega$, and $y \in \cY$.
  \end{enumerate}
 Such kernels are also characterized by the Kolmogorov factorization 
 property:  {\em there exists a function $H \colon \Omega \to 
 \cL(\widetilde \cH, \cY)$ for some auxiliary Hilbert spaces 
 $\widetilde \cH$ so that }
\begin{equation}   \label{Kol}
  K(\omega, \zeta) = H(\omega) H(\zeta)^{*}.
\end{equation}
 One particular way to produce this factorization is by taking $\widetilde \cH = 
 \cH$ and setting  $H(\omega) = \be(\omega)$ where $\be(\omega)$ is 
 the point-evaluation map described above.  Whenever a Hilbert space 
 of functions arises in this way from a positive kernel $K$, one 
 writes $\cH = \cH(K)$. An early thorough treatment of RKHSs (for 
 the case $\cY = {\mathbb C}$) is the paper of Aronszajn 
 \cite{aron}; a good recent treatment is in the book of 
 Agler-McCarthy \cite{AgMcC} (where they are called {\em Hilbert 
 function spaces}) while the recent papers \cite{BV-formal} 
 formulate more general settings (formal commuting or noncommuting 
 variables).
 
\smallskip

 Given a pair of reproducing kernel Hilbert spaces $\cH(K_{0})$ and 
 $\cH(K)$ where say $\cH(K_{0})$ consists of functions with values in 
 $\cY_{0}$ and $\cH(K)$ consists of functions with values in $\cY$, 
 an object of much interest for operator theorists is the space of 
 multipliers $\cM(K_{0}, K)$ consisting of $\cL(\cY_{0}, \cY)$-valued 
 functions $F$ on $\Omega$ with the property that the multiplication 
 operator
 $$
    M_{F} \colon f(\omega) \mapsto F(\omega) f(\omega)
 $$
 maps $\cH(K_{0})$ into $\cH(K)$.  The simple computation 
 \begin{align*}
 \langle M_{F} f, K(\cdot, \zeta) y \rangle_{\cH(K)} &= \langle F(\zeta) 
 f(\zeta), y \rangle_{\cY}\notag\\ &= \langle f(\zeta), F(\zeta)^{*} y 
 \rangle_{\cY_{0}}= \langle f, K_{0}(\cdot, \zeta) F(\zeta)^{*} y 
 \rangle_{\cH(K_{0})}
 \end{align*}
 shows that
 \begin{equation}   \label{ShieldsWallen}
    (M_{F})^{*} \colon K(\cdot, \zeta) y \mapsto K_{0}(\cdot, \zeta) 
    F(\zeta)^{*} y.
 \end{equation}
Therefore
$$
\langle (I-M_FM_F^*)K(\cdot,\zeta)y, \, K(\cdot,\omega)y^\prime\rangle_{\cH(K)}=
\langle(K(\omega,\zeta)-F(z)K_0(\omega,\zeta)F(\zeta)^*)y, \, y^\prime\rangle_{\cY}
$$ 
so that $F$ is a contractive multiplier from $\cH(K_0)$ to $\cH(K)$ if and only if 
the kernel  $K(\omega,\zeta)-F(z)K_0(\omega,\zeta)F(\zeta)^*$ is positive on 
$\Omega\times\Omega$.
By letting $K_0(\omega,\zeta)\equiv I_\cY$ and rescaling, one can arrive at the following 
proposition 
\cite{beabur}.

\begin{proposition}
A function $F: \, \Omega\to \cY$ belongs to $\cH(K)$ with $\|F\|_{\cH(K)}\le \gamma$ if 
and only if 
the kernel $K(\omega,\zeta)-\gamma^{-2}F(z)F(\zeta)^*$ is positive on 
$\Omega\times\Omega$.
\label{bourb}
\end{proposition}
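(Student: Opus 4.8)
The plan is to obtain this as the special case of the contractive--multiplier characterization recorded in the paragraph just above, in which the source reproducing kernel Hilbert space is taken to be the scalars. Concretely, let $\cY_{0}=\C$ and let $K_{0}$ be the constant kernel $K_{0}(\omega,\zeta)\equiv 1$ on $\Omega$; then $\cH(K_{0})$ is the one--dimensional space of constant functions, which I identify isometrically with $\C$ (the constant function with value $c$ having norm $|c|$). For a function $F\colon \Omega\to\cY$, the associated multiplication operator acts on a constant $c\in\cH(K_{0})=\C$ by $M_{F}\colon c\mapsto c\,F(\cdot)$, so $M_{F}$ maps $\cH(K_{0})$ into $\cH(K)$ exactly when $F\in\cH(K)$, and in that event $M_{F}\in\cL(\C,\cH(K))$ has operator norm precisely $\|F\|_{\cH(K)}$. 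Hence $\gamma^{-1}F$ is a contractive multiplier from $\cH(K_{0})$ to $\cH(K)$ if and only if $F\in\cH(K)$ with $\|F\|_{\cH(K)}\le\gamma$. Feeding $\gamma^{-1}F$ and $K_{0}\equiv 1$ into the general criterion turns the latter condition into the positivity on $\Omega\times\Omega$ of $K(\omega,\zeta)-\gamma^{-1}F(\omega)\cdot 1\cdot \gamma^{-1}F(\zeta)^{*}=K(\omega,\zeta)-\gamma^{-2}F(\omega)F(\zeta)^{*}$, which is the claim.

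Should a self-contained proof be preferred, I would argue as follows. For the forward direction, if $F\in\cH(K)$ with $\|F\|_{\cH(K)}\le\gamma$, then for any points $\zeta_{1},\dots,\zeta_{N}\in\Omega$ and vectors $y_{1},\dots,y_{N}\in\cY$, setting $g=\sum_{i}K(\cdot,\zeta_{i})y_{i}$ and using the reproducing property,
\[
\sum_{i,j=1}^{N}\big\langle\big(K(\zeta_{i},\zeta_{j})-\gamma^{-2}F(\zeta_{i})F(\zeta_{j})^{*}\big)y_{j},y_{i}\big\rangle_{\cY}=\|g\|_{\cH(K)}^{2}-\gamma^{-2}\big|\langle F,g\rangle_{\cH(K)}\big|^{2}\ge 0
\]
by Cauchy--Schwarz and $\|F\|_{\cH(K)}\le\gamma$; this is the positivity of the kernel. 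For the converse, suppose the kernel is positive. On the dense linear span of the functions $K(\cdot,\zeta)y$ define the linear functional $\phi(K(\cdot,\zeta)y)=\langle y,F(\zeta)\rangle_{\cY}$; rewriting the positivity hypothesis exactly as in the displayed identity shows $|\phi(g)|^{2}\le\gamma^{2}\|g\|_{\cH(K)}^{2}$, so $\phi$ extends to a bounded linear functional of norm $\le\gamma$ on $\cH(K)$, hence $\phi(g)=\langle g,\widetilde F\rangle_{\cH(K)}$ for a unique $\widetilde F\in\cH(K)$ with $\|\widetilde F\|_{\cH(K)}\le\gamma$. Comparing $\phi(K(\cdot,\zeta)y)=\langle y,F(\zeta)\rangle_{\cY}$ with $\langle K(\cdot,\zeta)y,\widetilde F\rangle_{\cH(K)}=\langle y,\widetilde F(\zeta)\rangle_{\cY}$ gives $F(\zeta)=\widetilde F(\zeta)$ for every $\zeta$, so $F=\widetilde F\in\cH(K)$ with $\|F\|_{\cH(K)}\le\gamma$.

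The only delicate point, and hence the step I expect to be the main obstacle, is the reverse implication: a priori $F$ is merely a $\cY$-valued function on $\Omega$, and one must promote it to an element of $\cH(K)$. In the first approach this is buried in the ``if'' half of the general multiplier criterion (which constructs $M_{F}^{*}$ on kernel functions, verifies it is a contraction, and extends by density), while in the second it is precisely the Riesz--representation step. Everything else---the identification of $\cH(K_{0})$ with $\C$, the computation of $\|M_{F}\|$, and the bookkeeping that rewrites the positivity of $I-\gamma^{-2}M_{F}M_{F}^{*}$ as the positivity of $K-\gamma^{-2}FF^{*}$---is routine.
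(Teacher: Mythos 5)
Your first argument is exactly the paper's proof: the paper obtains the proposition by specializing the contractive-multiplier criterion of the preceding paragraph to a constant source kernel and rescaling (your choice $\cY_{0}=\C$, $K_{0}\equiv 1$ is in fact the correct reading of the paper's slightly loose ``$K_{0}\equiv I_{\cY}$''). The additional self-contained Cauchy--Schwarz/Riesz argument is correct and fills in the details the paper leaves to the citation, but it is not a different route in any essential way.
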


 A first example of a reproducing kernel Hilbert space is the 
 space $H^{2}(\cY)$ (considered as consisting of analytic functions 
 on the unit disk ${\mathbb D}$) with the Szeg\H{o} kernel tensored 
 with the identity operator on $\cY$:  $k_{\rm Sz}(z,w) I_{\cY}$ where 
 $k_{\rm Sz}(z,w) = \frac{1}{1 - z \overline{w}}$.  The space of 
 multipliers $\cM(k_{\rm Sz} I_{\cU}, k_{\rm Sz} I_{\cY})$ between 
 two Hardy spaces can be identified with the space 
 $H^{\infty}(\cU, \cY)$ of bounded analytic functions on 
 ${\mathbb D}$ with values in $\cL(\cU, \cY)$.  Given $F \in 
 H^{\infty}(\cU, \cY)$, the associated multiplication operator 
 $M_{F}$ is simply the Toeplitz operator $T_{F}$ which was discussed 
 above.  Note that in general $F^{*}$ is not a multiplier when $F$ is 
 a multiplier; however it does hold that $M_{F}^{*} = (T_{F})^{*} = 
 T_{F^{*}}$ for $F$ a multiplier between two Szeg\H{o}-kernel RKHSs 
 (i.e., Hardy spaces).
 
\smallskip

  The next task is the identification of 
 the de Branges-Rovnyak space $\cH(S)$ (where $S$ is a Schur-class 
 function) as a reproducing kernel Hilbert space in its own right.  
 This fills in part (1) of Theorem \ref{T:H(S)}.
 
 \begin{theorem} \label{T:H(S)1} The de Branges-Rovnyak space $\cH(S) = 
     \cH^{p}_{T_{S}}$ associated with a Schur-class function  
     $S\in\cS(\cU, \cY)$ as above is isometrically equal to the 
     reproducing kernel Hilbert space $\cH(K_{S})$ where $K_{S}$ is 
     the de Branges-Rovnyak kernel
  \begin{equation}  \label{deBRkernel}
      K_{S}(z,w) = \frac{I - S(z) S(w)^{*}}{1 - z \overline{w}}.
  \end{equation}
  \end{theorem}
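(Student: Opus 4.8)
The plan is to show that $\cH(S) = \cH^p_{T_S}$ is a reproducing kernel Hilbert space whose kernel is exactly $K_S(z,w) = (I - S(z)S(w)^*)/(1-z\overline{w})$, by combining the pullback-space description of $\cH(S)$ from Proposition \ref{P:dBR} with the general multiplier/kernel machinery developed in Section \ref{S:RKHS}. Since $\cH(S) = \cH^p_{T_S} = \operatorname{Ran}(I - T_S T_S^*)^{1/2}$, equipped with the lifted norm, the key fact to exploit is that $\cH^p_{T_S}$ is isometrically the Brangesian complement of $\cM(S) = \operatorname{Ran} T_S$ inside $H^2(\cY)$, and $H^2(\cY) = \cH(k_{\rm Sz} I_\cY)$ is itself a reproducing kernel Hilbert space with kernel $k_{\rm Sz}(z,w) I_\cY = (1-z\overline w)^{-1} I_\cY$.

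First I would verify that $\cH(S)$, as a subspace of $H^2(\cY)$ carrying a (generally larger) norm, has continuous point evaluations: since $\cH(S) \hookrightarrow H^2(\cY)$ contractively and point evaluation is continuous on $H^2(\cY)$, it is continuous on $\cH(S)$; hence $\cH(S)$ is a RKHS and has some reproducing kernel $K$. Next I would identify $K$ explicitly. The clean route is via the adjoint of the inclusion $\iota \colon \cH(S) \to H^2(\cY)$: from the lifted-norm construction we know $\iota \iota^* = I - T_S T_S^*$ as an operator on $H^2(\cY)$. Writing $k_w(z) = k_{\rm Sz}(z,w) I_\cY$ for the $H^2(\cY)$ reproducing kernel at $w$ (so $\langle h, k_w y\rangle_{H^2(\cY)} = \langle h(w), y\rangle_\cY$), the reproducing kernel of $\cH(S)$ is obtained by pushing $k_w$ through $\iota^*$: for $f \in \cH(S)$ and $y \in \cY$,
$$
\langle f, \iota^*(k_w y)\rangle_{\cH(S)} = \langle \iota f, k_w y\rangle_{H^2(\cY)} = \langle f(w), y\rangle_\cY,
$$
so $K(\cdot, w) y = \iota^*(k_w y) = (I - T_S T_S^*)(k_w y)$. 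It remains to compute $(I - T_S T_S^*)(k_w y)$ as an explicit function of $z$. Since $T_S^* = T_{S^*}$ as established in Section \ref{S:RKHS}, and since $(M_F)^*$ acts on the Szeg\H{o}-type kernel by $K(\cdot,\zeta) y \mapsto K_0(\cdot,\zeta) F(\zeta)^* y$ — here $T_S^*(k_w y) = k_w S(w)^* y$ — we get $T_S T_S^*(k_w y)(z) = S(z) \cdot \frac{S(w)^* y}{1-z\overline w}$, hence $K(z,w) y = \frac{y}{1-z\overline w} - \frac{S(z)S(w)^* y}{1-z\overline w} = K_S(z,w) y$, which is exactly the claimed formula.

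To finish, I would note that the kernel determines the space isometrically: a RKHS is uniquely determined by its reproducing kernel (standard, e.g. via \cite{aron, AgMcC}), so $\cH(S) = \cH(K_S)$ isometrically. Alternatively — and this is the route that ties in Proposition \ref{bourb} and avoids invoking uniqueness as a black box — one can argue directly: $F \in \cH(K_S)$ with $\|F\|_{\cH(K_S)} \le \gamma$ iff $K_S(z,w) - \gamma^{-2} F(z)F(w)^*$ is a positive kernel, and one checks this positivity condition is equivalent to the boundedness of the supremum defining $\|F\|_{\cH(S)}^2$ in \eqref{def1HS} (both say the same operator inequality $I - T_S T_S^* \ge \gamma^{-2} (\text{rank-one Toeplitz-type object})$), giving equality of norms. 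I expect the main obstacle to be purely bookkeeping: carefully justifying the identity $K(\cdot,w)y = (I - T_S T_S^*)(k_w y)$ requires knowing that $\iota^* = \iota\iota^*$ restricted appropriately behaves as the reproducing kernel map, which follows from Proposition \ref{P:dBR} and the lifted-norm computation $\iota^* \colon y \mapsto Py$ (with $P = I - T_S T_S^*$) from Proposition \ref{P:contrincl}; once that is in place, the computation of $T_S T_S^*$ on Szeg\H{o} kernels via \eqref{ShieldsWallen} is routine.
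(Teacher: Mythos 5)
Your proposal is correct and follows essentially the same route as the paper: both rest on the identity $T_S^*(k_{\rm Sz}(\cdot,w)y) = k_{\rm Sz}(\cdot,w)S(w)^*y$ from \eqref{ShieldsWallen}, the observation that $K_S(\cdot,w)y = (I - T_S T_S^*)(k_{\rm Sz}(\cdot,w)y)$ lies in $\cH(S)$, and the lifted-norm pairing $\langle f,(I-T_ST_S^*)g\rangle_{\cH(S)} = \langle f,g\rangle_{H^2(\cY)}$ to verify the reproducing property. Your framing via continuity of point evaluations and the adjoint of the inclusion map is just a mild repackaging of the same computation.
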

 
  \begin{proof}
      As a result of the general identity \eqref{ShieldsWallen}, it follows that  
 \begin{equation}  \label{ts*}
        T_{S}^{*} \colon k_{\rm Sz}(\cdot, w) y \mapsto k_{\rm 
	Sz}(\cdot, w) S(w)^{*} y
 \end{equation}
  and hence 
  $$
    K_{S}(\cdot, w)y = (I - T_{S} T_{S}^{*}) (k_{\rm Sz}(\cdot ,w) y).
  $$
  It then follows that $K_{S}(\cdot, w) y \in \cH(S)$ for each $w \in 
  {\mathbb D}$ and $y \in \cY$, and also, for $f = (I - T_{S} T_{S}^{*}) f_{1} \in 
  \cH(S)$
  \begin{align*} 
  \langle f, K_{S}(\cdot, w) y \rangle_{\cH(S)}&  = \langle f, (I - 
  T_{S} T_{S}^{*}) (k_{\rm Sz}(\cdot, w) y \rangle_{\cH(S)} \\
  & = \langle f, k_{\rm Sz}(\cdot, w) y \rangle_{H^{2}(\cY)}= \langle f(w), y 
\rangle_{\cY}
  \end{align*}
  from which one can see that $K_{S}(z,w)$ has all the properties required 
  to be the reproducing kernel for $\cH(S)$.
 \end{proof}
 
  If $\be(w) \colon \cH(S) \to \cY$ is the evaluation-at-$w$ map
  $$
    \be(w) \colon f \mapsto f(w),
  $$
  on the space $\cH(S)$, then its adjoint is given by the kernel function for the point 
$w$:
  $$
    \be(w)^{*} \colon y \mapsto K_{S}(z, w) y = \frac{ I 
    - S(z) S(w)^{*}}{ 1 - z \overline{w}}.
  $$
  In particular,
  \begin{equation}   \label{e(0)*}
      \be(0)^{*} \colon y \mapsto (I - S(z) S(0)^{*}) y.
  \end{equation}
  This is the last piece needed to complete the proof of part (4) of 
  Theorem \ref{T:H(S)}.  Along the way, here also is a completion of the analysis 
  of when the inequality \eqref{norm-est} holds with equality.
  
  \begin{theorem}  \label{T:H(S)4}
      The colligation matrix $\bU_{S}=\left[
      \begin{smallmatrix} A_S & B_S \\ C_S & D_S\end{smallmatrix}\right] \colon \left[ 
      \begin{smallmatrix} \cH(S) \\ \cU \end{smallmatrix} \right]\to 
	\left[  \begin{smallmatrix} \cH(S) \\ \cY \end{smallmatrix} \right]$ given by 
\eqref{bUS} is 
      coisometric and has characteristic function equal to $S$:
 \begin{equation}   \label{realization'}
     S(z) = D_{S} + z C_{S}(I - zA_{S})^{-1} B_{S}.
 \end{equation}
 Furthermore, the reproducing kernel $K_{S}(z,w)$ can be expressed 
 directly in terms of the colligation matrix $\bU_{S}$:
 \begin{equation}   \label{ker-bU}
     \frac{I - S(z) S(w)^{*}}{1 - z \overline{w}} = C_{S} (I - zA_{S})^{-1} 
     (I - \overline{w} A_{S}^{*})^{-1} C_{S}^{*}.
  \end{equation}
      Moreover:
      \begin{enumerate}
	 \item  The following are equivalent:
	  \begin{enumerate}
	  \item $\bU_{S}$ is unitary.  
     \item  $S$ satisfies the  condition
      \begin{equation} \label{bUunitary}
	  S(z) u \in \cH(S) \Rightarrow u = 0.
\end{equation}
\item The maximal factorable minorant of $I - S^{*}S$ is zero, i.e.,
if $\Phi \in \cS(\cU, \cY_{0})$ satisfies $\Phi(\zeta)^{*} 
\Phi(\zeta) \le I - S(\zeta)^{*} S(\zeta)$ for almost all $\zeta \in 
{\mathbb T}$, then $\Phi = 0$.
\end{enumerate}

   \item  The following are equivalent:
   \begin{enumerate}
       \item The difference quotient identity holds, i.e.,
equality holds in \eqref{norm-est}.
\item $S$ satisfies the condition
\begin{equation}  \label{diffquoteq}
    S(z) u \in \cH(S) \Rightarrow S(z) u \equiv 0.
\end{equation}
\item If $S'$ is the normalization of $S$ as in \eqref{normalize} (so 
$S'$ satisfies \eqref{normalize} and $\cH(S) = \cH(S')$),
then the maximal factorable minorant of $S^{\prime *}S' $ is zero. 
\end{enumerate}
\end{enumerate}
\end{theorem}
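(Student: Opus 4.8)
The plan is to read off the coisometry of $\bU_{S}$, the realization \eqref{realization'}, and the kernel formula \eqref{ker-bU} from the operator identities already established for the triple $(R_{0},\tau,\be(0))$ in Theorems~\ref{T:H(S)23} and~\ref{T:H(S)1}, and then to settle the two ``Moreover'' dichotomies by computing $\operatorname{Ker}\bU_{S}$.

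For the coisometry one expands $\bU_{S}\bU_{S}^{*}$ as a $2\times 2$ operator matrix. Against the data at hand, the $(1,1)$-entry $R_{0}R_{0}^{*}+\tau\tau^{*}=I_{\cH(S)}$ is just the norm identity \eqref{RS*norm} rewritten as an operator identity; the $(2,2)$-entry $\be(0)\be(0)^{*}+S(0)S(0)^{*}=I_{\cY}$ follows by evaluating the formula \eqref{e(0)*} for $\be(0)^{*}$ at $z=0$; and the $(2,1)$-entry $\be(0)R_{0}^{*}+S(0)\tau^{*}=0$ follows by evaluating the formula \eqref{RS*} for $R_{0}^{*}$ at $z=0$, the $(1,2)$-entry being its adjoint. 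For the realization the key observation is that $\be(0)(I-zR_{0})^{-1}=\be(z)$: since $R_{0}$ is the backward-shift restriction, $(R_{0}^{n}f)(0)=f_{n}$ for $f(z)=\sum_{k\ge 0}f_{k}z^{k}$, so $\be(0)(I-zR_{0})^{-1}f=\sum_{n\ge 0}z^{n}(R_{0}^{n}f)(0)=f(z)$; applying this with $f=\tau u=\tfrac{S(z)-S(0)}{z}u$ gives $z\,C_{S}(I-zA_{S})^{-1}B_{S}\,u=(S(z)-S(0))u$, and adding $D_{S}u=S(0)u$ yields \eqref{realization'}. The kernel formula \eqref{ker-bU} then follows by combining $\be(z)=\be(0)(I-zR_{0})^{-1}$ with its adjoint $\be(w)^{*}=(I-\overline{w}R_{0}^{*})^{-1}\be(0)^{*}$: the right-hand side of \eqref{ker-bU} equals $\be(z)\be(w)^{*}$, and since $\be(w)^{*}y=K_{S}(\cdot,w)y$ by Theorem~\ref{T:H(S)1} this is $K_{S}(z,w)=\tfrac{I-S(z)S(w)^{*}}{1-z\overline{w}}$.

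For part (1): as $\bU_{S}$ is a coisometry, $\bU_{S}^{*}\bU_{S}=I-P_{\operatorname{Ker}\bU_{S}}$, so $\bU_{S}$ is unitary exactly when $\operatorname{Ker}\bU_{S}=\{0\}$. Solving $\bU_{S}\sbm{f\\u}=0$ directly (the two equations are $[f(z)-f(0)]/z+[S(z)-S(0)]u/z=0$ and $f(0)+S(0)u=0$) forces $f=-S(\cdot)u$, so
$$\operatorname{Ker}\bU_{S}=\left\{\sbm{-S(\cdot)u\\u}\colon u\in\cU,\ S(\cdot)u\in\cH(S)\right\},$$
which is $\{0\}$ precisely when \eqref{bUunitary} holds; this gives (a)$\Leftrightarrow$(b). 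The equivalence (b)$\Leftrightarrow$(c) is the operator-valued form of the classical description of extreme points of the unit ball of $H^{\infty}$ (de Leeuw--Rudin in the scalar case), and is where the genuine work lies. Here one uses $\cH(S)=\operatorname{Ran}(I-T_{S}T_{S}^{*})^{1/2}$ (Proposition~\ref{P:dBR}): the single-vector form of the Douglas criterion says $S(\cdot)u=T_{S}u\in\cH(S)$ iff there is a $C$ with $|\langle T_{S}u,h\rangle|^{2}\le C(\|h\|^{2}_{H^{2}(\cY)}-\|T_{S}^{*}h\|^{2}_{H^{2}(\cU)})$ for all $h\in H^{2}(\cY)$; passing to boundary values turns this into a boundedness statement for evaluation against the weight $I-S(\zeta)^{*}S(\zeta)$ on $\mathbb{T}$, and a nonzero such $u$ yields, via the Riesz representation theorem together with the (outer) factorization underlying the maximal factorable minorant, a nonzero $\Phi\in\cS(\cU,\cY_{0})$ with $\Phi^{*}\Phi\le I-S^{*}S$ a.e.; the converse runs the same way (see \cite{dBR1, dBR2, sarasonbook}).

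For part (2): equality in \eqref{norm-est} for all $f\in\cH(S)$ says precisely that $A_{S}^{*}A_{S}+C_{S}^{*}C_{S}=R_{0}^{*}R_{0}+\be(0)^{*}\be(0)=I_{\cH(S)}$, i.e.\ that $\sbm{A_{S}\\C_{S}}$ is isometric. Since $\bU_{S}$ is a coisometry, $\bU_{S}^{*}\bU_{S}$ is an orthogonal projection whose $(1,1)$-block is $A_{S}^{*}A_{S}+C_{S}^{*}C_{S}$; this block equals $I$ iff $P_{\operatorname{Ker}\bU_{S}}$ has zero $(1,1)$-block, and since $\bU_{S}^{*}\bU_{S}\le I$ forces the off-diagonal blocks to vanish as well, this happens iff $\operatorname{Ker}\bU_{S}\subseteq\{0\}\oplus\cU$. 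By the description of $\operatorname{Ker}\bU_{S}$ found above, $\operatorname{Ker}\bU_{S}\subseteq\{0\}\oplus\cU$ means exactly that $S(\cdot)u\in\cH(S)$ forces $S(\cdot)u\equiv 0$, i.e.\ condition \eqref{diffquoteq}; this is (a)$\Leftrightarrow$(b). For (b)$\Leftrightarrow$(c) one reduces to part (1) applied to the normalization $S'$: since $S'=S\alpha^{*}$ and $S=S'\alpha$ for a partial isometry $\alpha$, the family $\{S(\cdot)u\colon u\in\cU\}$ coincides with $\{S'(\cdot)u'\colon u'\in\cU'\}$ and $\cH(S)=\cH(S')$, so, using that $S'$ satisfies \eqref{normalize} (hence $S'(\cdot)u'\equiv 0\Leftrightarrow u'=0$), condition \eqref{diffquoteq} for $S$ is equivalent to \eqref{bUunitary} for $S'$; applying the already-proved equivalence (b)$\Leftrightarrow$(c) of part (1) to $S'$ then identifies this with the vanishing of the maximal factorable minorant of $I-S'^{*}S'$. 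The main obstacle throughout is the factorable-minorant equivalence in part (1); the rest is bookkeeping with the identities for $R_{0},R_{0}^{*},\tau,\be(0)$.
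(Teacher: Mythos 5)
Most of your proposal tracks the paper's argument and in places streamlines it: the three operator identities for coisometry are exactly the paper's \eqref{id1}--\eqref{id3}; your derivation of \eqref{realization'} and \eqref{ker-bU} from $\be(z)=\be(0)(I-zR_{0})^{-1}$ and $\be(w)^{*}y=K_{S}(\cdot,w)y$ is a clean packaging of the paper's Taylor-series and ``direct computation'' steps; the kernel computation giving part (1)(a)$\Leftrightarrow$(b) is the paper's; and your part (2)(a)$\Leftrightarrow$(b) via the block structure of $P_{\operatorname{Ker}\bU_{S}}$ (positivity forcing the off-diagonal blocks to vanish when the $(1,1)$-block does) is a nice self-contained argument that the paper does not spell out, as is the reduction of (2)(b)$\Leftrightarrow$(c) to part (1) applied to the normalization $S'$.

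The genuine gap is in your proof of (1)(b)$\Leftrightarrow$(c), which you correctly identify as where the work lies but then sketch incorrectly. You assert that the Douglas-criterion condition $|\langle T_{S}u,h\rangle|^{2}\le C\bigl(\|h\|^{2}_{H^{2}(\cY)}-\|T_{S}^{*}h\|^{2}_{H^{2}(\cU)}\bigr)$ ``passes to boundary values'' as a boundedness statement against the weight $I-S(\zeta)^{*}S(\zeta)$. It does not: since $T_{S}^{*}h=P_{H^{2}(\cU)}(S^{*}h)$, one has
\begin{equation*}
\|h\|^{2}-\|T_{S}^{*}h\|^{2}=\int_{\mathbb T}\langle (I-S(\zeta)S(\zeta)^{*})h(\zeta),h(\zeta)\rangle\,\tfrac{|{\tt d}\zeta|}{2\pi}+\|P_{H^{2}(\cU)^{\perp}}(S^{*}h)\|^{2},
\end{equation*}
so the quadratic form governing membership in $\cH(S)=\operatorname{Ran}(I-T_{S}T_{S}^{*})^{1/2}$ involves the weight $I-SS^{*}$ (on $\cY$-valued functions) plus a Hankel term, not the weight $I-S^{*}S$ appearing in (c). The weight $I-S^{*}S$ enters through the \emph{other} column of $\bU_{S}$: $\langle(I-T_{S}^{*}T_{S})f,f\rangle=\int\langle(I-S^{*}S)f,f\rangle$ exactly, and the paper's route is to show (via the Nikolskii--Vasyunin identities \eqref{NVconclusion1}--\eqref{NVid2}) that unitarity of the coisometry $\bU_{S}$ is equivalent to the isometry of the columns $\sbm{A_{S}\\ C_{S}}$ and $\sbm{B_{S}\\ D_{S}}$, that the latter is equivalent to $\inf_{g}\|(I-T_{S}^{*}T_{S})^{1/2}(u+\bS_{\cU}g)\|^{2}=0$ for all $u$, i.e.\ to density of analytic polynomials in $L^{2}\bigl((I-S^{*}S)|{\tt d}\zeta|\bigr)$, and then to invoke \cite[Prop.~V.4.2]{NF} to identify that density with the vanishing of the maximal factorable minorant of $I-S^{*}S$. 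Your appeal to ``the Riesz representation theorem together with the outer factorization underlying the maximal factorable minorant'' skips precisely this chain, and the step it replaces it with (the wrong weight) would fail. To repair the proof you should either prove (a)$\Leftrightarrow$(c) along these lines and deduce (b)$\Leftrightarrow$(c) from your (a)$\Leftrightarrow$(b), or supply the Sz.-Nagy--Foias model embedding used in \cite{BK}; as written, the equivalence with (c) in both parts (1) and (2) is unproved. (A minor further point: your reduction in (2)(b)$\Leftrightarrow$(c) tacitly uses that the normalizing partial isometry $\alpha$ is a constant, so that $\{S(\cdot)u\}=\{S'(\cdot)u'\}$; this is true for the normalization \eqref{normalize} but deserves a sentence.)
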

  
  \begin{proof}
      The coisometry property of $\bU_{S}$ amounts to the three 
      identities
 \begin{align}  
   R_{0} R_{0}^{*} + \tau \tau^{*} & = I_{\cH(S)}, \label{id1} \\
   \be(0) \be(0)^{*} + S(0) S(0)^{*} & = I_{\cY}, \label{id2} \\
   \be(0) R_{0}^{*} + S(0) \tau^{*} & = 0. \label{id3}
 \end{align}
 The identity \eqref{id1} is the same as \eqref{RS*norm} which has been 
 already verified above.  From the formula \eqref{e(0)*} it is 
 immediate that
 $$
  \be(0) \be(0)^{*} \colon y \mapsto \be(0) \left(I - S(z) 
  S(0)^{*} \right)y =(I - S(0) S(0)^{*})y
 $$
 and \eqref{id2} now follows.  Next, use the formula \eqref{RS*} for 
 $R_{0}^{*}$ to compute
 $$
 \be(0) R_{0}^{*} \colon f(z) \mapsto - S(0) \tau^{*}(f)
 $$ from which \eqref{id3} is now immediate.  This completes the 
 verification of the coisometry property of $\bU_{S}$.

\smallskip
 
 To verify \eqref{realization}, let $S(z) = \sum_{n=0}^{\infty} S_{n} 
 z^{n}$ be the Taylor series for $S(z)$. Then the computation
 \begin{align*}
     \left( D_{S} + w C_{S}(I - w A_{S})^{-1} B_{S} \right) u & =
     S(0) u + w \sum_{n=1}^{\infty} w^{n} \be(0) \bS_{\cY}^{*n} \left( 
     \sum_{k=1}^{\infty} S_{k+1} z^{k} \right) u \\
     & = S(0) u + w \sum_{n=1}^{\infty} w^{n} S_{n+1} u = S(w) u
 \end{align*}
 verifies the realization formula \eqref{realization'}.
 The formula \eqref{ker-bU} can be verified by direct computation: 
 plug in the formula \eqref{realization'} for $S(z)$ and use that 
 $\bU_{S}$ is coisometric.
 
\smallskip

 Once it is known that $\bU_{S}$ is coisometric, it follows that 
 $\bU_{S}$ is unitary if and only if $\operatorname{Ker} \bU_{S} = 
 \{0\}$.  Note that $\left[ 
 \begin{smallmatrix} f(z) \\ u \end{smallmatrix} \right]$ being in 
     $\operatorname{Ker} \bU_{S}$ means that 
$$ \begin{bmatrix} 0 \\ 0 \end{bmatrix} = \bU_{S} \begin{bmatrix} f(z) \\ u \end{bmatrix} 
=
\begin{bmatrix} [f(z) - f(0)]/z + [S(z) u - S(0) u]/z \\  f(0) + S(0) u 
    \end{bmatrix}.
    $$
Thus
$$
 f(z) = f(0) + z \cdot \frac{ f(z) - f(0)}{z} 
 = -S(0)u - z \cdot \frac{S(z) - S(0)}{z} u  = -S(z) u.
$$
This completes the proof of the equivalence of (a) and (b) in part  (1) in the theorem.
   
\smallskip

 A computation of Nikolskii-Vasyunin (see  \cite[Theorem 
    8.7]{NV0}) gives the following:
   for $f \in \cH(S)$ of the special form $f = (I - T_{S} 
    T_{S}^{*}) f_{1}$,
    \begin{align}  
  &  \| f \|^{2}_{\cH(S)} - \| R_{0} f \|^{2}_{\cH(S)} \notag \\
  & \quad =
    \| f(0) \|^{2} + \inf\{ \| (I - T_{S}^{*} T_{S})^{\frac{1}{2}} 
    (T_{S}^{*}f_{1})(0) + \bS_{\cY} g \|^{2}_{H^{2}(\cU} \colon g \in 
    H^{2}(\cU)\}.\notag
    \end{align}
Therefore the operator $\left[ \begin{smallmatrix} A_{S} \\ C_{S} 
\end{smallmatrix} \right] = \left[ \begin{smallmatrix} R_{0} \\ 
\be(0) \end{smallmatrix} \right]$ is isometric if and only if
\begin{equation}  \label{NVconclusion1}
\inf\{ \| (I - T_{S}^{*} T_{S})^{\frac{1}{2}} 
    (T_{S}^{*}f_{1})(0) + \bS_{\cY} g \|^{2}_{H^{2}(\cU} \colon g \in 
    H^{2}(\cU)\} = 0 \quad\text{for all}\quad g \in H^{2}(\cU).
 \end{equation}
 Another computation of Nikolskii-Vasyunin (see \cite[Lemma 
 i.2]{NV0}) gives:
\begin{equation}  \label{NVid2}
    \| \tau(u) \|^{2}_{\cH(S)} = \| u \|^{2}_{\cU} - \| S(0) u 
    \|^{2}_{\cY} - \inf \{ \| (I - T_{S}^{*} T_{S})^{\frac{1}{2}}(u + 
    \bS_{\cU} 
    g) \|^{2} \colon g \in H^{2}(\cU) \}.
\end{equation}
Therefore the operator $\left[ \begin{smallmatrix} B_{S} \\ D_{S} 
\end{smallmatrix} \right] = \left[ \begin{smallmatrix} \tau \\ 
S(0) \end{smallmatrix} \right]$ is isometric  if and only if
\begin{equation}   \label{NVconclusion2}
     \inf \{ \| (I - T_{S}^{*} T_{S})^{\frac{1}{2}}(u + \bS_{\cU} 
    g) \|^{2} \colon g \in H^{2}(\cU) \} = 0 \text{ for all } u \in 
    \cU.
\end{equation}
Note that condition \eqref{NVconclusion2} implies 
\eqref{NVconclusion1}.  The condition \eqref{NVconclusion2} amounts 
to the statement that  vector analytic polynomials in $z$ are dense in the 
    weighted space $L^{2}(\cU)$ with $(I - S(\zeta)^{*} S(\zeta)) 
    |d\zeta|$-metric.  It is well known how this condition in turn 
    translates to $0$ is the maximal factorable minorant for $I - 
    S(\zeta)^{*} S(\zeta)$ (see e.g.~ \cite[Proposition V.4.2]{NF}).
    In this way one can see that the zero maximal-factorable-minorant 
    condition is equivalent to each column $\left[ 
    \begin{smallmatrix} A_{S} \\ C_{S} \end{smallmatrix} \right]$ and
	 $\left[ 
    \begin{smallmatrix} B_{S} \\ D_{S} \end{smallmatrix} \right]$
	of the colligation matrix $\bU_{S}$ being isometric.
 As the isometry property of $U_{S}^{*}$ has already been verified 
 above, it follows that $\bU_{S}$ is 
contractive.  The next elementary exercise is to verify in general 
that a contractive $2 \times 2$ block operator matrix $\bU_{S} =
\left[ \begin{smallmatrix} A_{S} & B_{S} \\ C_{S} & D_{S} 
\end{smallmatrix} \right]$ with each column isometric must itself be 
isometric.  In this way the equivalence of (a) and (c) 
in statement (1) of the theorem follows, and (1) follows as well.

\smallskip
    
    It remains to verify the equivalence of (a) and (c) with the 
     normalization assumption \eqref{normalize} imposed. For 
     simplicity  $S$ rather than $S'$ is written with the assumption that $S$ 
     satisfies \eqref{normalize}.  Then one can show  
    that the set of elements in $\cU$ of the form 
    $(T_{S}^{*}f_{1})(0)$with $f_{1} \in H^{2}(\cY)$ is dense in $\cU$. Then a limiting 
    argument implies that equality 
    holding in \eqref{norm-est} for all $f \in \cH(S)$ is equivalent 
    to the condition \eqref{NVconclusion2}.  As explained in the 
    previous paragraph, \eqref{NVconclusion2} is equivalent to the 
    zero maximal-factorable-minorant condition.
    The reverse implication follows by reversing the argument.
\end{proof}
    
 \begin{remark} {\em     The equivalence of (a) and (c) in part (2) 
	of Theorem \ref{T:H(S)4} was already observed 
    in \cite[Theorem 6]{BK}.  The idea of the proof is as follows.
    If $\Phi^{*} \Phi$ is the maximal factorable minorant for $I - 
    S^{*} S$ (with $\Phi$ outer), then there is a unitary 
    transformation $J \colon \cH(S) \to {\mathbb K}\left( \left[ 
    \begin{smallmatrix} S \\ \Phi \end{smallmatrix} \right] \right)$,
where ${\mathbb K}\left( \left[\begin{smallmatrix} S \\ \Phi \end{smallmatrix} 
\right] \right)$ is the Sz.-Nagy-Foias model space based on the Schur-class 
function $\left[ \begin{smallmatrix} S \\ \Phi \end{smallmatrix} \right]$
which intertwines the de Branges-Rovnyak model operator $R_{0}$ with 
the adjoint of the Sz.-Nagy-Foias model operator $( T_{\left[ 
\begin{smallmatrix} S \\ \Phi \end{smallmatrix} \right]} )^{*}$; on 
    kernel functions the map $J$ has the form
 $$
  J \colon K_{S}(\cdot , w) y \mapsto \begin{bmatrix} K_{S}(\cdot , w) y \\ - 
  \Phi S(w)^{*} k_{\rm Sz}(\cdot, w) y \\ - \nabla S(w)^{*} k_{\rm 
  Sz}(\cdot w) y \end{bmatrix}
 $$
 where 
 $$
   \nabla(\zeta) = (I - S(\zeta)^{*} S(\zeta) - \Phi(\zeta)^{*} 
   \Phi(\zeta) )^{\frac{1}{2}}  \text{ for } \zeta \in {\mathbb T}.
 $$
 Note that the first two components of elements of ${\mathbb K}\left( 
 \left[ \begin{smallmatrix} S \\ \Phi \end{smallmatrix} \right] 
 \right)$ are analytic functions functions on ${\mathbb D}$ while the third component is
 a measurable function on ${\mathbb T}$, and that $J$ has the form 
 $$
    J \colon f \mapsto \begin{bmatrix} f \\ g_{f} \\ w_{f} \end{bmatrix},
 $$
 i.e., the projection of $J$ to the first component is the identity map.  
 One next observes that
 $$\| R_{0} f \|^{2}_{\cH(S)} = \left \| ( T_{\left[ \begin{smallmatrix} S \\ \Phi
\end{smallmatrix} 
 \right]})^{*} \left[ \begin{smallmatrix} f \\ g_{f} \\ w_{f} 
\end{smallmatrix} \right] \right\|^{2} = \| f \|^{2}_{\cH(S)} - \|f(0)\|^{2} - 
 \|g_{f}(0)\|^{2}.
$$
Hence the difference quotient identity (equality in \eqref{norm-est}) 
holding for all $f \in \cH(S)$ is equivalent to $g_{f}(0) = 0$ for 
all $f \in \cH(S)$.  Under the assumption \eqref{normalize}, it can be shown 
that this is equivalent to the normalized version $S'$ of $S$ (i.e., $S'$ 
satisfying \eqref{normalize} while $\cH(S) = \cH(S')$) having maximal 
factorable minorant equal to $0$.\footnote{This last point was 
missed in \cite{BK}; the normalization condition  \eqref{normalize} 
was not mentioned explicitly.}

\smallskip

The equivalence of (a) and (c) in part (1) of Theorem \ref{T:H(S)4} 
also follows from results of \cite{BK}.   For this alternative proof, 
the following fact will be used:  {\em the colligation matrix 
$\bU_{S}$ being unitary is equivalent to the isometric embedding of 
$\cH(S)$ into the two-component space $\cD(S)$ being onto.}  Theorem 8 
of \cite{BK} shows that this happens if and only if the maximal 
factorable minorant of $I - S^{*} S$ is zero.
}\end{remark}

To this point the operator-range characterization of 
$\cH(S)$  has been used to develop the basic properties of the operators $R_{0}, 
\tau, \be(0), S(0)$ in the colligation matrix $\bU_{S}$. 
Alternatively, the space $\cH(S)$ could have been defined as the RKHS with 
reproducing kernel $K_{S}$ and this characterization then used to obtain 
the results concerning $\bU_{S}$.  To see directly that $K_{S}$ is a 
positive kernel (without recourse to the operator-range characterization 
of $\cH(S)$), it suffices to note that the Toeplitz operator $T_{S}$ 
has $\| T_{S} \| \le 1$ for $S \in \cS(\cU, \cY)$ (since the 
boundary-value function $\zeta \mapsto S(\zeta)$ on ${\mathbb T}$ has 
contractive values) and hence
\begin{equation} \label{contrac}
    \| f \|^{2}_{H^2(\cY)} - \| T_{S}^{*} f \|^{2}_{H^2(\cU)} \ge 0 
    \quad\text{for all}\quad f \in H^{2}(\cY).
\end{equation}
Set $f = \sum_{j=1}^{N} k_{\rm Sz}(\cdot, w_{j})y_{j} \in 
H^{2}(\cY)$. Then condition \eqref{contrac} translates to 
\eqref{posker}, and it follows that $K_{S}$ is a positive kernel and 
hence one can define $\cH(S)$ as the reproducing kernel Hilbert space 
$\cH(K_{S})$.   The following discussion presents an alternate proof 
of parts (2), (3), and (4) of Theorem \ref{T:H(S)} using the 
reproducing-kernel-space characterization (i.e., part (1) of Theorem 
\ref{T:H(S)})  rather than the operator-range characterization 
\eqref{def2HS} of the space $\cH(S)$.

\begin{proof}[Proof of parts (2), (3), (4) of Theorem \ref{T:H(S)} 
    based on part (1)]  Given that $S$ is in the Schur class $\cS(\cU, \cY)$, 
it has been  explained in the previous paragraph why $K_{S}$ is a positive 
    kernel (i.e., satisfies \eqref{posker}) and hence generates a 
    well-defined RKHS $\cH(K_{S})$.  By the general theory of RKHSs 
    (see \eqref{Kol} above and the explanation there),
    it is known that $K_{S}$ has its canonical Kolmogorov decomposition
 \begin{equation}   \label{Kolfact'}
     K_{S}(z,w) = \be(z) \be(w)^{*}
\end{equation}
where $\be(z) \colon \cH(K_S) \to \cY$ is the point-evaluation map 
$\be(z) \colon f \mapsto f(z)$ for each $z \in {\mathbb D}$.  
Substituting $K_{S}(z,w) = [I - S(z) S(w)^{*}]/(1 - z \overline{w})$ 
and rearranging \eqref{Kolfact'} leads to
$$
z \overline{w} \be(z) \be(w)^{*} + I_{\cY} = \be(z) \be(w)^{*} + S(z) 
S(w)^{*}.
$$
The inner-product identity
$$
\langle \overline{w} \be(w)^{*} y, \overline{z} \be(z)^{*} y' 
\rangle_{\cH(K_{S})} + \langle y, y' \rangle_{\cY} = 
\langle \be(w)^{*} y, \be(z)^{*} y' \rangle_{\cH(K)} + \langle 
S(w)^{*} y, S(z)^{*} y' \rangle_{\cU}
$$
then follows, where $y,y'$ are arbitrary vectors in $\cY$.
This inner-product identity can be written in aggregate form
$$
\left\langle \begin{bmatrix} \overline{w} \be(w)^{*} \\ I \end{bmatrix} 
y, \begin{bmatrix} \overline{z} \be(z)^{*} \\ I \end{bmatrix} y' 
\right\rangle_{\cH(K_S) \oplus \cY} = \left\langle \begin{bmatrix} \be(w)^{*} \\ S(w)^{*} 
\end{bmatrix} y, \begin{bmatrix} \be(z)^{*} \\ S(z)^{*} \end{bmatrix} 
y' \right \rangle_{\cH(K_S) \oplus \cU}.
$$
It follows that the mapping
\begin{equation}   \label{defV1}
    V \colon \begin{bmatrix} \overline{w} \be(w)^{*} \\ I 
\end{bmatrix} y  \mapsto \begin{bmatrix} \be(w)^{*} \\ S(w)^{*} 
\end{bmatrix} y
\end{equation}
extends by linearity and continuity to an isometry from
$$
\cD = \overline{\operatorname{span}} \left\{ \begin{bmatrix} 
\overline{w} \be(w)^{*} \\ I \end{bmatrix} y \colon w \in {\mathbb 
D}, y \in \cY \right\}
$$
onto
$$
\cR = \overline{\operatorname{span}} \left\{ \begin{bmatrix} 
\be(w)^{*} \\ S(w)^{*} \end{bmatrix} y \colon w \in {\mathbb D}, y 
\in \cY \right\}.
$$
Taking $w=0$ in the expression for a given element of $\cD$ reveals 
 that $\cD \supset \left[ \begin{smallmatrix} \{0\} \\ \cY 
\end{smallmatrix} \right]$.  Since the kernel elements $\{ \be(w)^{*} 
y  = K_{S}(\cdot, w) y \colon w \in {\mathbb D} \setminus \{0\},\, y 
\in \cY \}$ are dense in $\cH(K_{S})$, it next follows that in fact $\cD$ 
is the whole space $\cD = \left[ \begin{smallmatrix} \cH(K_{S}) \\ 
\cY \end{smallmatrix} \right]$ and $V$ is defined on the whole space
 $\cD = \left[ \begin{smallmatrix} \cH(K_{S}) \\ 
\cY \end{smallmatrix} \right]$.  Write out $V$ in block-matrix form
$$
  V = \begin{bmatrix} A^{*} & C^{*} \\ B^{*} & D^{*} \end{bmatrix} 
  \colon \begin{bmatrix} \cH(K_{S}) \\ \cY \end{bmatrix} \to 
  \begin{bmatrix} \cH(K_{S}) \\ \cU \end{bmatrix}.
$$
It then follows  from \eqref{defV1} that
$$
    \begin{bmatrix} A^{*} & C^{*} \\ B^{*} & D^{*} \end{bmatrix} 
	\colon \begin{bmatrix} \overline{w} \be(w)^{*} \\ I 
    \end{bmatrix} y  = \begin{bmatrix} \be(w)^{*} \\ S(w)^{*} 
\end{bmatrix} y
$$
or
\begin{equation}   \label{defV2}
    \left\{ \begin{array}{rcl}
    \overline{w} A^{*} \be(w)^{*}y + C^{*} y & = & \be(w)^{*} y \\
    \overline{w} B^{*} \be(w)^{*}y + D^{*} y & = & S(w)^{*} y.
    \end{array} \right.
\end{equation}
The first equation can be solved for $\be(w)^{*} y$  (note that 
$\|A\| \le 1$ since $V$ is isometric and hence the inverse $(I - 
\overline{w} A^{*})^{-1}$ is well-defined for all $w \in {\mathbb 
D}$):
$$
\be(w)^{*} y = (I - \overline{w} A^{*})^{-1} C^{*} y.
$$
The second equation then implies
$$
\left(\overline{w} B^{*} (I - \overline{w} A^{*})^{-1} C^{*}  + D^{*} 
\right) y = S(w)^{*} y.
$$
Cancelling off the vector $y$, taking adjoints, and replacing the variable $w$   
with the variable $z$ then gives
$$
  S(z) = D + z C (I - zA)^{-1} B \quad\text{where}\quad \begin{bmatrix} A & B 
  \\ C & D \end{bmatrix} = V^{*}\quad \text{is a coisometry.}
$$
Putting the pieces together leads to part (4) of Theorem \ref{T:H(S)} apart from 
making the identification $V^{*} = \bU_{S}$.

\smallskip

Letting $w=0$ in \eqref{defV2} enables one to solve for $C^{*}$:
$$
C^{*} y = \be(0)^{*} y =  K_{S}(\cdot, 0) y.
$$
The simple duality computation
$$
  \langle Cf, y \rangle_{\cY} = \langle f, C^{*} y 
  \rangle_{\cH(K_{S})} = \langle f, K_{S}(\cdot, 0) y 
  \rangle_{\cH(K_{S})} = \langle f(0), y \rangle_{\cY}
$$
shows that $C = \be(0)$.  One can use \eqref{defV2} 
to compute the action of $A^{*}$ on kernel elements $\be(w)^{*} y$ as 
follows:
$$
A^{*} \be(w)^{*} y = \frac{1}{\overline{w}} \left( \be(w)^{*} - 
\be(0)^{*} \right) y.
$$
Another duality computation
\begin{align*}
    \langle Af, \be(w)^{*} y \rangle_{\cH(K_{S})} & = \langle f, 
    A^{*} \be(w)^{*} y \rangle_{\cH(K_{S})}  \\
    & = \langle f, \frac{1}{\overline{w}} \left( \be(w)^{*} - 
    \be(0)^{*} \right) y \rangle_{\cH(K_{S})} \\
    & = \left\langle[f(w) - f(0)]/w, y \right\rangle_{\cY}
\end{align*}
leads to the conclusion that
$$
   A \colon f(z) \mapsto [f(z) - f(0)]/z, 
$$
i.e., that $A =  R_{0} = \bS_{\cY}^{*}|_{\cH(K_{S})}$.  Application of 
the second equation in \eqref{defV2} with $w=0$ yields
that $D^{*} = S(0)^{*}$, i.e., $D = S(0)$.  A return to the second 
equation in \eqref{defV2} 
then implies that
$$
  B^{*} \be(w)^{*} y \mapsto \left( [S(w)^{*} - 
  S(0)^{*}]/\overline{w} \right) y.
$$
Then the computation
\begin{align*}
    \langle B u, \be(w)^{*} y \rangle_{\cH(K_{S})}& = \langle u, [ 
   \left( S(w)^{*} - S(0)^{*}]/\overline{w} \right) y \rangle_{\cU} \\
   & = \langle \left( [ S(w) - S(0)]/w \right) u, y \rangle_{\cY}
\end{align*}
verifies that $B \colon u \mapsto \left( [S(z) - S(0)]/z 
\right) u$, i.e., $B = \tau$ where $\tau$ is as in \eqref{tau}.
It has been now verified that $\left[ \begin{smallmatrix} A & B \\ C & D 
\end{smallmatrix} \right] = \bU_{S}$.  Moreover \eqref{norm-est} and 
\eqref{RS*norm} are immediate consequences of the already derived 
property that $\bU_{S} = \left[ \begin{smallmatrix} A & B \\ C & D 
\end{smallmatrix} \right]$ is a coisometry.
    \end{proof}
    
    \begin{remark}  \label{R:lurking} {\em The above construction has
	become known as 
	the {\em lurking isometry argument} (see \cite{Ball-Winnipeg} 
	where this term was first coined).  In this single-variable 
	setting, it turns out that the isometry is defined on the 
	whole space; in other applications (see 
	\cite{Ball-Winnipeg}), the isometry is defined only on a 
	subspace and one must extend it to an isometry (or unitary or 
	contraction depending on what is wanted) on the whole space.
	}\end{remark}
    
The preparation is now laid for the use RKHS techniques to prove part (5) of Theorem 
\ref{T:H(S)}.  The following is an enhanced version  of  the 
necessity direction of part (5) of Theorem \ref{T:H(S)};  note 
that the  sufficiency direction is handled in Theorem \ref{T:cni} 
above.  The following proof is based on the RKHS characterization of 
$\cH(S)$ (part (1) of Theorem \ref{T:H(S)}).

\begin{theorem} \label{T:H(S)5}
    Let $A$ be a completely non-isometric contraction operator on a 
    Hilbert space $\cX$.   Then there is a Schur-class function $S 
    \in \cS(\cU, \cY)$ so that $T$ is unitarily equivalent to the 
    model operator $R_{0} = \bS_{\cY}^{*}|_{\cH(S)}$ on $\cH(S)$.  
    Furthermore one can arrange that $I - S^{*}S$ have maximal factorable 
    minorant equal to $0$.
    \end{theorem}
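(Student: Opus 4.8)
The plan is to realize the given contraction $A$ (written $T$ in the statement) as the state operator of a unitary colligation whose characteristic function is the sought-after $S$, and then to identify the state space $\cX$ isometrically with $\cH(S)=\cH(K_S)$ by means of an observability (``lurking isometry'') map, working throughout in the reproducing-kernel picture supplied by part~(1) of Theorem~\ref{T:H(S)}.

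First I would fix the output space and operator. Set $\cY=\overline{\operatorname{Ran}}\,(I-A^*A)^{1/2}$ and let $C=(I-A^*A)^{1/2}$, regarded as a map $\cX\to\cY$; then, crucially, $V_0:=\sbm{A\\ C}\colon\cX\to\cX\oplus\cY$ is an \emph{isometry}, since $\|Ax\|^2+\|Cx\|^2=\|x\|^2$. I would then complete $V_0$ to a unitary colligation $\bU=\sbm{A & B\\ C & D}\colon\cX\oplus\cU\to\cX\oplus\cY$ by taking $\cU:=(\operatorname{Ran}V_0)^{\perp}$ inside $\cX\oplus\cY$ and declaring $\bU^*=\sbm{A^* & C^*\\ B^* & D^*}$ to act by $\xi\mapsto\bigl(V_0^*\xi,\ P_{(\operatorname{Ran}V_0)^{\perp}}\xi\bigr)$; since $V_0V_0^*$ is the orthogonal projection onto $\operatorname{Ran}V_0$ this $\bU^*$ is an isometry, and it is onto because every $\xi$ can be written $\xi=V_0x'+u'$ with $u'\in\cU$, so $\bU$ is genuinely unitary (not merely coisometric). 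I would then let $S(z)=D+zC(I-zA)^{-1}B$, which lies in $\cS(\cU,\cY)$ because $\bU$ is coisometric.

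Next I would introduce the observability map $W\colon\cX\to(\text{$\cY$-valued analytic functions on }\D)$, $Wx(z)=C(I-zA)^{-1}x=\sum_{n\ge0}(CA^nx)z^n$, and show it is a unitary of $\cX$ onto $\cH(K_S)$. The same direct computation that proves \eqref{ker-bU} (valid for any coisometric colligation with characteristic function $S$) yields the kernel identity $\frac{I-S(z)S(w)^*}{1-z\overline w}=C(I-zA)^{-1}(I-\overline wA^*)^{-1}C^*$; feeding this, together with the positivity of $I-\|x\|^{-2}xx^*$ (so that $C(I-zA)^{-1}\bigl(I-\|x\|^{-2}xx^*\bigr)(I-\overline wA^*)^{-1}C^*$ has the Kolmogorov form \eqref{Kol}), into Proposition~\ref{bourb} shows $Wx\in\cH(K_S)$ with $\|Wx\|_{\cH(K_S)}\le\|x\|$, so $W$ is a contraction. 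The reproducing property then gives $W^*\bigl(K_S(\cdot,w)y\bigr)=(I-\overline wA^*)^{-1}C^*y$, and the kernel identity turns this into $WW^*K_S(\cdot,w)y=K_S(\cdot,w)y$; as kernel functions are dense, $WW^*=I_{\cH(K_S)}$, so $W$ is a coisometry onto $\cH(K_S)$. Injectivity is exactly where the completely non-isometric hypothesis enters: iterating $\|x\|^2=\|Ax\|^2+\|Cx\|^2$ gives $\sum_{n=0}^{N-1}\|CA^nx\|^2=\|x\|^2-\|A^Nx\|^2$, so $Wx=0$ forces $\|A^nx\|=\|x\|$ for all $n$, i.e.\ $x\in\bigcap_{n\ge0}\{x:\|A^nx\|=\|x\|\}=\{0\}$. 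Hence $W$ is unitary. Finally, expanding $(I-zA)^{-1}-I=zA(I-zA)^{-1}$ gives $R_0W=WA$, and together with $\be(0)W=C$, $WB=\tau$ and $D=S(0)$ this exhibits $W$ as a unitary equivalence of $\bU$ with the canonical colligation $\bU_S$ of \eqref{bUS}. In particular $A$ is unitarily equivalent to $R_0=\bS_\cY^*|_{\cH(S)}$, and $\cH(S)=\cH(K_S)$ by Theorem~\ref{T:H(S)1}.

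For the final ``furthermore,'' I would observe that $\bU$ is unitary by construction and $W$ intertwines it with $\bU_S$, so $\bU_S$ is unitary, and then invoke the equivalence (a)$\Leftrightarrow$(c) in part~(1) of Theorem~\ref{T:H(S)4} to conclude that the maximal factorable minorant of $I-S^*S$ is $0$. The step I expect to require the most care is the construction of $\bU$: one must choose $C=(I-A^*A)^{1/2}$, so that $V_0$ is isometric rather than merely contractive, and complete it to a \emph{unitary} (not just coisometric) colligation — it is precisely these two facts that, respectively, force the maximal-factorable-minorant conclusion and make the injectivity of $W$ coincide with the completely non-isometric hypothesis. The rest is the routine observability/lurking-isometry bookkeeping (cf.\ Remark~\ref{R:lurking}).
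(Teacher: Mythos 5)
Your proposal is correct and follows essentially the same route as the paper: choose $C=(I-A^*A)^{1/2}$ so that $\sbm{A\\ C}$ is isometric, complete to a unitary colligation, use the observability operator (injective precisely by the completely non-isometric hypothesis) together with the kernel identity \eqref{ker-bU} to identify the state space with $\cH(K_S)$, and deduce the maximal-factorable-minorant claim from the unitarity of $\bU_S$ via Theorem \ref{T:H(S)4}. The only cosmetic differences are that the paper realizes the range of $\cO_{C,A}$ as a pullback space and completes the isometric column by a Cholesky factorization, whereas you complete via the defect subspace $(\operatorname{Ran} V_0)^{\perp}$ and verify unitarity of $W$ by the coisometry computation $WW^*=I$ on kernel functions.
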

    
\begin{proof} 
    Let $A$ be any completely non-isometric contraction 
    operator on a Hilbert space $\cX$.  Choose an operator $C$ from 
    $\cX$ to a coefficient Hilbert space $\cY$ so that the block 
    column operator $\left[ \begin{smallmatrix} A \\ C 
\end{smallmatrix} \right] \colon \cX \to \left[ \begin{smallmatrix} 
\cX \\ \cY \end{smallmatrix} \right]$ is an isometry, i.e., so that
$$
   C^{*} C = I_{\cX} - A^{*} A.
$$
Note that one way to do this is to take $\cY = \cD_{A}$ equal to the 
{\em defect space} of $A$
$$
  \cD_{A} = \overline{\operatorname{Ran}} D_{A}
$$
where $D_{A}$ is the {\em defect operator} of $A$:
$$
  D_{A} = ( I - A^{*} A)^{\frac{1}{2}}.
$$
Consider the operator $\cO_{C,A} \colon \cX \to H^{2}(\cY)$ defined by
\begin{equation}
\cO_{C,A} \colon x \mapsto C (I - zA)^{-1} x.
\label{obs}
\end{equation}
The notation $\cO_{C,A}(z) = C (I - zA)^{-1}$ is useful; then
$\left(\cO_{C,A} x \right)(z) = \cO_{C,A}(z) x$.
To see that $\cO_{C,A}$ maps into $H^{2}(\cY)$,  note that 
$\cO_{C,A} x$ has Taylor series
$$
\cO_{C,A}(z) x = \sum_{n=0}^{\infty} (C A^{n} x) z^{n}.
$$
The computation of the $H^{2}(\cY)$-norm of $\cO_{C,A}x$ can be 
organized as follows:
\begin{align*}
    \| {\cO}_{C,A} x \|^{2}_{H^{2}(\cY)} & = 
    \sum_{n=0}^{\infty} \| C A^{n} x \|^{2}_{\cY} = \sum_{n=0}^{\infty} 
    \langle A^{*n} C^{*} C A^{n} x, x \rangle_{\cX}  \\
    & =
    \sum_{n=0}^{\infty} \langle A^{*n} (I - A^{*} A) A^{n} x, x 
    \rangle_{\cX} = \sum_{n=0}^{\infty} \left[ \| A^{n} x \|^{2}_{\cX} - \| 
    A^{n+1} x \|^{2}_{\cX} \right] \\
    & = \| x \|^{2}_{\cX} - \lim_{N \to \infty} \| A^{N} x \|^{2}_{\cX} < \infty
\end{align*}
(where the limit exists since the sequence $\{ \| A^{N} x \| \}_{N 
\ge 0}$ is nonincreasing).  For reasons connected with system theory 
which are not discussed here, the notation $\cO_{C,A}$ is sued to 
suggest the {\em observability operator} for the {\em output pair} $(C,A)$.
Note that $x \in \operatorname{Ker} \cO_{C,A}$
if and only if $C A^{n} x = 0$ for $n=0,1,2,\dots$, or equivalently, 
if and only if 
$$
0 = \langle A^{*n}C^{*}C A^{n} x, x \rangle_{\cX} = \langle A^{*}(I - 
A^{*} A) A^{n}x, x \rangle = \| A^{n}x\|^{2}_{\cX} - \| A^{n+1} x 
\|^{2}_{\cX}.
$$
Thus $\|A^{n} x \| = \|x \|$ for all $n=1,2, \dots$, or 
$\operatorname{Ker} \cO_{C,A}$ is the 
maximal invariant subspace for $A$ on which $A$ is isometric.  The 
assumption that $A$ is completely non-isometric implies that $x=0$, 
and hence the observability operator $\cO_{C,A}$ is 
one-to-one.  Define the Hilbert space $\cH_{0}$ as
$\cH_{0} = \operatorname{Ran} \cO$ with the lifted norm:
$$
   \| \cO_{C,A} x \|_{\cH_{0}} = \| x \|_{\cX},
$$
i.e., $\cH_{0}$ is the pullback space $\cH_{0} = \cH^{p}_{\cO_{C,A}}$.
The computation
$$ 
\bS_{\cY}^{*}\cO_{C,A} x = \sum_{n=0}^{\infty} (C A^{n+1} x) 
z^{n} = \left(\sum_{n=0}^{\infty} C A^{n} x z^{n} \right) A = \cO_{C,A} (A x)
$$
verifies the intertwining relation
$$
  \bS_{\cY}^{*}\cO_{C,A} = \cO_{C,A} A.
$$
As by definition $\cO_{C,A}$ is a unitary transformation from $\cX$ 
onto $\cH_{0}$, it follows that $A$ is unitarily equivalent to the 
operator $R_{0} \in \cL(\cH_{0})$ given by $R_{0} = 
\bS_{\cY}^{*}|_{\cH_{0}}$.

\smallskip

It remains to identify the space $\cH_{0}$ more explicitly  as a RKHS.  
Since $\cH_{0}$ sits inside 
$H^{2}(\cY)$, the point-evaluation maps $f \mapsto f(w)$ are 
well-defined for all $w \in {\mathbb D}$. Therefore
\begin{align*}
    \langle f(w), y \rangle_{\cY} & = \langle C (I - wA)^{-1} x, y 
    \rangle_{\cY} =  \langle x, (I - \overline{w} A^{*})^{-1} y 
    \rangle_{\cX} \\
    & = \langle \cO_{C,A} x, \cO_{C,A} 
    (I - \overline{w} A^{*})^{-1} y  \rangle_{\cH_{0}} \\
    & = \langle f, \cO_{C,A} (I - \overline{w} A^{*})^{-1} 
    C^{*} \rangle_{\cH_{0}}
\end{align*}
where 
$$
\left(\cO_{C,A} (I - \overline{w} A^{*})^{-1} C^{*} y
\right)(z) = C (I - zA)^{-1} (I - \overline{w} A^{*})^{-1} C^{*}y =: 
K_{C,A}(z,w) y.
$$
It now follows that $\cH_{0}$ is a RKHS with reproducing kernel equal to 
$K_{C,A}$.  The claim to be checked now is that $K_{C,A}$ in fact has the form $K_{S}(z,w) 
= [I - S(z) S(w)^{*}]/(1 - z \overline{w})$ for  a Schur-class 
function $S \in \cS(\cU, \cY)$ for an appropriate coefficient Hilbert 
space $\cU$.

\smallskip

Toward this end, a useful observation is that the formula \eqref{ker-bU} in Theorem 
\ref{T:H(S)4} is quite general:  {\em if $\bU = \left[ 
\begin{smallmatrix} A & B \\ C & D \end{smallmatrix} \right] \colon 
    \left[ \begin{smallmatrix} \cX \\ \cU \end{smallmatrix} \right] 
    \to \left[ \begin{smallmatrix} \cX \\ \cY \end{smallmatrix} 
    \right]$ is 
    coisometric. The function $S$ given by $S(z) = D + z C (I - zA)^{-1} B$
    is in the Schur class $\cS(\cU, \cY)$ and}
 $$
 \frac{I - S(z) S(w)^{*}}{1 - z \overline{w}} = C (I - zA)^{-1} (I - 
 \overline{w} A^{*})^{-1} C^{*} = K_{C,A}(z,w).
 $$
 Thus, identification of $K_{C,A}$ as having the form $K_{S}$ requires only 
 a solution of the matrix completion problem: {\em given the isometric output 
 pair $(C,A)$} (so $A^{*} A + C^{*} C = I$), {\em find a block-column 
 operator matrix $\left[ \begin{smallmatrix} B \\ D \end{smallmatrix} 
 \right]$ so that the operator matrix $\bU = \left[ 
 \begin{smallmatrix} A & B \\ C & D \end{smallmatrix} \right]$ is 
     coisometric.}  But this is easily done by solving a Cholesky 
     factorization problem:  {\em find $\left[ \begin{smallmatrix} B 
     \\ D \end{smallmatrix} \right]$ so that}
 $$
 \begin{bmatrix} B \\ D \end{bmatrix} \begin{bmatrix} B^{*} & D^{*} 
     \end{bmatrix} = \begin{bmatrix} I & 0 \\ 0 & I \end{bmatrix} 
     - \begin{bmatrix} A \\ C \end{bmatrix} 
     \begin{bmatrix} A^{*} & C^{*} \end{bmatrix}.
$$
Given that $\left[ \begin{smallmatrix} A \\ C \end{smallmatrix} 
\right]$ is isometric, it is not difficult to see that this problem 
is solvable; in fact, one can arrange that $\left[ 
\begin{smallmatrix} B \\ D \end{smallmatrix} \right]$ is injective 
    and then $\bU$ will be unitary (not just coisometric).  
    Furthermore, an adaptation of the lurking isometry argument 
    (see Remark \ref{R:lurking}) in 
    the proof of part (4) of Theorem \ref{T:H(S)} above shows that 
    the colligation matrix $\bU$ is unitarily equivalent to the model 
    colligation matrix $\bU_{S}$ (see \cite{bb} for precise details).
    As a consequence 
    of part (1) of Theorem \ref{T:H(S)}, the conclusion that $S$ has maximal 
    factorable minorant equal to $0$ follows.  This completes the proof of 
    Theorem \ref{T:H(S)5}.
\end{proof}

\begin{remark}  \label{R:cni} {\em  Let $S \in \cS(\cU, \cY)$ be any 
    Schur-class function (possibly with nonzero maximal factorable 
    minorant) and let $A$ be the model operator $A = R_{0} = 
    \bS_{\cY}^{*}|_{\cH(S)}$ on $\cH(S)$.  Theorem \ref{T:cni} tells us 
    that $A$ is completely non-isometric.  Therefore one can apply the 
    construction of Theorem \ref{T:H(S)5} to arrive at a Schur-class 
    function $S_{0} \in \cS(\cU_{0}, \widetilde \cY_{0})$ 
    so that $A$ is unitarily equivalent to $R_{0} = 
    \bS_{\cY_{0}}^{*}|_{\cH(S_{0})}$ on 
    $\cH(S_{0})$ where $S_{0}$ has  the additional 
    property that $I - S_{0}^{*} S_{0}$ has zero maximal 
    factorable minorant.  Since the original $S$ for which $I - 
    S^{*} S$ does  not have zero 
    maximal factorable minorant, it cannot be the case that $S$ and 
    $S_{0}$ are the same.  A natural question is:  how does 
    $S$ determine $S_{0}$? As can be seen from the results of 
    \cite{BK} (details left to the reader),
    the answer is:  one choice of $S_{0}$ is 
    $S_{0} = \left[ \begin{smallmatrix} S \\ \Psi 
\end{smallmatrix} \right]$ where $\Psi$ is the outer factor for the 
maximal factorable minorant of $I - S^{*} S$.
} \end{remark}

\begin{remark}  \label{R:extremepoints} {\em The zero 
    maximal-factorable-minorant property is  also closely intertwined 
    with the characterization of the extreme points of the closed unit 
    ball of $H^{\infty}(\cU, \cY)$, i.e., of the Schur class 
    $\cS(\cU, \cY)$.  In the scalar case ($\cU = \cY = {\mathbb C}$), 
    it is well known that a given function $s$ is an extreme point of 
    $\cS({\mathbb C})$ exactly when $\log ( 1 - |s(\zeta)|^{2})$ is 
   log-integrable on ${\mathbb T}$ which in turn is equivalent to $1 
   - |s(\zeta)|^{2}$ having a factorization $a(\zeta)^{*} a(\zeta)$ 
   with $a$ a nonzero element of  $\cS({\mathbb C})$ which in turn 
   (in the scalar case) is equivalent to $0$ not being the maximal 
   factorable minorant for $1 - |s(\zeta)|^{2}$.  In \cite{BK} it was 
   observed for the case of $S \in \cS(\cU, \cY)$ that $I - 
   S(\zeta)^{*} S(\zeta)$ having a zero maximal factorable 
   minorant is a sufficient condition for $S$ to be an extreme point 
   of $\cS(\cU, \cY)$, and by symmetry it is also sufficient that $I 
   - S(\zeta) S(\zeta)^{*}$ have a zero maximal factorable minorant.  It 
   was then conjectured there (with attribution to de Branges) that a 
   necessary and sufficient condition for $S$ to be extreme in 
   $\cS(\cU, \cY)$ is that at least one of $I - S(\zeta)^{*} 
   S(\zeta)$ and $I - S(\zeta) S(\zeta)^{*}$ have a zero maximal 
   factorable minorant.  This conjecture was independently confirmed 
   around the same time by Treil (see \cite{Tr89}).
    } \end{remark}

In Section \ref{S:pullback}, the overlapping space $\cL_{T}$ was defined 
by formulas \eqref{cLT1}, \eqref{cLT2} for the case of a general contraction operator $T 
    \in \cL(\cH_{1}, \cH_{2})$.  Although not discussed in Section 
    \ref{S:Toeplitz},  of course this notion applies  to the 
    situation where $T = T_{S} \in \cL(H^{2}(\cU), H^{2}(\cY))$ is 
    the analytic Toeplitz operator arising from a Schur-class 
    function $S \in \cS(\cU, \cY)$.  The following result identifies 
    the associated overlapping space $\cL_{T_{S}}$ as a reproducing 
    kernel Hilbert space.
    
    \begin{theorem}   \label{T:overlapping-RKHS}  For $S \in \cS(\cU, 
	\cY)$, the overlapping space $\cL_{T_{S}}$ defined by 
	\eqref{cLT1}, \eqref{cLT2} is a reproducing kernel Hilbert 
	spaces $\cH(K_{\cL(S)})$ with reproducing kernel $K_{\cL(S)}$ 
	given by
\begin{equation}   \label{RKoverlap}
    K_{\cL(S)}(z,w) = \frac{1}{2} \frac{ \varphi(z) + 
    \varphi(w)^{*}}{1 - z \overline{w}}
\end{equation}
where $\varphi(z)$ is given by
\begin{equation}   \label{varphi}
  \varphi(z) = \int_{{\mathbb T}} \frac{\zeta + z}{\zeta - z} (I - 
  S(\zeta)^{*}S(\zeta) ) \frac{|{\tt d} \zeta|}{2 \pi}.
\end{equation}
\end{theorem}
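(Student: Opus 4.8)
The plan is to reduce the statement to the lifted-norm machinery of Section~\ref{S:pullback} together with a Fourier-coefficient computation identifying the resulting kernel with the Herglotz-type kernel in \eqref{RKoverlap}. The starting observation is a general principle: if $0\le P\le I_{\cH}$ on a reproducing kernel Hilbert space $\cH=\cH(K)$ with point-evaluation maps $\be(w)$ (so $\be(w)^{*}y=K(\cdot,w)y$), then the contractively included subspace $\cH^{l}_{P}=\operatorname{Ran}P^{1/2}$ is again a reproducing kernel Hilbert space, with kernel $K_{P}(z,w)=\be(z)\,P\,\be(w)^{*}$. Indeed $P\,K(\cdot,w)y=P^{1/2}\bigl(P^{1/2}K(\cdot,w)y\bigr)\in\cH^{l}_{P}$, and for $f=P^{1/2}f_{1}\in\cH^{l}_{P}$ with $f_{1}\in\overline{\operatorname{Ran}}\,P$, Proposition~\ref{P:contrincl} together with $P^{1/2}K(\cdot,w)y\in\overline{\operatorname{Ran}}\,P$ gives $\langle f,\,P\,K(\cdot,w)y\rangle_{\cH^{l}_{P}}=\langle f_{1},\,P^{1/2}K(\cdot,w)y\rangle_{\cH}=\langle f,\,K(\cdot,w)y\rangle_{\cH}=\langle f(w),y\rangle_{\cY}$.

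Next I would identify the relevant positive contraction. By Proposition~\ref{P:cLT}, $\cL_{T_{S}}=\cH^{l}_{I-T_{S}^{*}T_{S}}$ isometrically. Since $T_{S}$ is the analytic Toeplitz operator with symbol $S$, one has $T_{S}^{*}T_{S}f=P_{H^{2}(\cU)}(S^{*}Sf)=T_{S^{*}S}f$ for $f\in H^{2}(\cU)$, hence $I-T_{S}^{*}T_{S}=T_{I-S^{*}S}=:T_{\Delta^{2}}$, where $\Delta(\zeta)^{2}:=I-S(\zeta)^{*}S(\zeta)\ge0$ a.e.\ on ${\mathbb T}$; this is a positive Toeplitz operator with $0\le T_{\Delta^{2}}\le I$. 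Applying the general principle with $\cH=H^{2}(\cU)$ and $K(z,w)=k_{\rm Sz}(z,w)I_{\cU}=(1-z\overline{w})^{-1}I_{\cU}$ shows that $\cL_{T_{S}}$ is a reproducing kernel Hilbert space with reproducing kernel $K_{\cL(S)}(z,w)$ determined by $K_{\cL(S)}(z,w)y=\bigl(T_{\Delta^{2}}(k_{\rm Sz}(\cdot,w)y)\bigr)(z)$.

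The heart of the proof is then the identification of this last expression with $\tfrac12(\varphi(z)+\varphi(w)^{*})(1-z\overline{w})^{-1}y$. Write the boundary function as $\Delta^{2}(\zeta)\sim\sum_{n\in{\mathbb Z}}C_{n}\zeta^{n}$ with $C_{n}\in\cL(\cU)$, $C_{-n}=C_{n}^{*}$. A direct compression-to-$H^{2}$ computation gives $\bigl(T_{\Delta^{2}}(k_{\rm Sz}(\cdot,w)y)\bigr)(z)=\sum_{p,q\ge0}C_{p-q}\,z^{p}\overline{w}^{\,q}\,y$. On the other hand, $\varphi$ in \eqref{varphi} is well defined and analytic on ${\mathbb D}$ (its integrand is $L^{1}$, being the product of the bounded Herglotz kernel and $\Delta^{2}\in L^{\infty}(\cU)$), and $\operatorname{Re}\varphi\ge0$ by the Poisson-integral form of its real part; moreover, expanding $\frac{\zeta+z}{\zeta-z}=1+2\sum_{n\ge1}z^{n}\overline{\zeta}^{\,n}$ for $\zeta\in{\mathbb T}$ and integrating term by term yields $\varphi(z)=C_{0}+2\sum_{n\ge1}C_{n}z^{n}$, so that $\tfrac12(\varphi(z)+\varphi(w)^{*})=C_{0}+\sum_{n\ge1}C_{n}z^{n}+\sum_{n\ge1}C_{-n}\overline{w}^{\,n}$. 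Multiplying by $(1-z\overline{w})^{-1}=\sum_{\ell\ge0}(z\overline{w})^{\ell}$ and collecting the coefficient of $z^{p}\overline{w}^{\,q}$ produces exactly $C_{p-q}$ in each of the regimes $p>q$, $p=q$, $p<q$. Comparing the two doubly-indexed series yields $K_{\cL(S)}(z,w)=\tfrac12(\varphi(z)+\varphi(w)^{*})(1-z\overline{w})^{-1}$, as claimed; positivity of this kernel is automatic from its realization as a genuine lifted-norm space.

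The step I expect to be the main obstacle is the bookkeeping in the last paragraph: one must check that the series for $T_{\Delta^{2}}(k_{\rm Sz}(\cdot,w)y)$ and for $\tfrac12(\varphi(z)+\varphi(w)^{*})(1-z\overline{w})^{-1}$ coincide coefficientwise, being careful with the sign of the index of $C_{n}$ in the regimes $p>q$ and $p<q$ (and with the diagonal $p=q$), and one must justify the term-by-term manipulations (compressing $\Delta^{2}\,k_{\rm Sz}(\cdot,w)y$ to $H^{2}$, and expanding the Herglotz transform). Because everything involved is either in $H^{2}(\cU)$ or bounded, no genuine convergence difficulty arises, but this index arithmetic is where an error would most easily creep in. A secondary point requiring a line of justification is the general principle of the first paragraph --- that the reproducing kernel of a contractively included subspace of a RKHS is obtained by compressing the ambient kernel --- which, while not stated in the excerpt, follows readily from Proposition~\ref{P:contrincl}.
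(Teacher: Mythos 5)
Your proposal is correct and follows essentially the same route as the paper: both identify $\cL_{T_{S}}$ with the lifted-norm space $\cH^{l}_{I-T_{S}^{*}T_{S}}$ via Proposition \ref{P:cLT}, show by the same density argument that its reproducing kernel is $(I-T_{S}^{*}T_{S})k_{\rm Sz}(\cdot,w)u$, and then match this with the Herglotz-type expression \eqref{RKoverlap}. The only difference is cosmetic: the paper carries out the final identification by writing both sides as integrals over ${\mathbb T}$ (its formula \eqref{form1}), while you compare Taylor/Fourier coefficients $C_{p-q}$ term by term --- two presentations of the same computation, and your index bookkeeping checks out.
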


\begin{proof}  Note first that the formula \eqref{varphi} for 
    $\varphi$ together with straightforward algebra enables one to 
    show that
\begin{equation}   \label{form1}
    \frac{1}{2} \frac{ \varphi(z) + \varphi(w)^{*}}{1 - z 
    \overline{w}} = \int_{{\mathbb T}} \frac{1}{(1 - z 
    \overline{\zeta}) (1 - \overline{w} \zeta)} (I  - S(\zeta)^{*} 
    S(\zeta)) \frac{| {\tt d} \zeta|}{2 \pi}.
\end{equation}
On the other hand,  Proposition \ref{P:cLT} identifies $\cL_{T_{S}}$ as 
    the lifted-norm space $\cH^{\ell}_{I - T_{S}^{*} T_{S}}$. For 
    $f = (I - T_{S}^{*} T_{S}) g \in \cL_{T_{S}}$ and $u \in \cU$ 
    and $w \in {\mathbb D}$, 
\begin{align*}
\langle f(w), u \rangle_{\cU} & = \langle (I - T_{S})^{*} T_{S}) g, 
k_{\rm Sz}(\cdot, w) u \rangle_{H^{2}(\cU)} \\
& = \langle f, (I - T_{S}^{*} T_{S}) k_{\rm Sz}(\cdot, w) u 
\rangle_{H^{2}(\cU)}.
\end{align*}
As the set of all $f$ of the form $f = (I - T_{S}^{*} T_{S}) g$ is dense 
in $\cH^{\ell}_{I - T_{S}^{*} T_{S}}$, it follows that the kernel 
function for $\cL_{T_{S}}$ is equal to $K_{\cL(S)}( \cdot, w) u : = (I 
- T_{S}^{*} T_{S}) k_{\rm Sz}(\cdot, w) u$.  This object can be 
computed as follows: for $u, u' \in \cU$, 
\begin{align*}
    \langle  K_{\cL(S)}(z, w) u, u' \rangle_{\cU} & = \left\langle (I 
    - T^{*}_{S} T_{S}) k_{\rm Sz}(\cdot, w) u, k_{\rm Sz}(\cdot, z) 
    u' \right\rangle_{H^{2}(\cU)} \\
  & = \int_{{\mathbb T}} \frac{1}{1 - \zeta \overline{w}} \cdot 
    \frac{1}{1 - \overline{\zeta z}}  \cdot  \left\langle (I - S(\zeta)^{*} 
 S(\zeta)) u, u' \right\rangle_{\cU} \frac{ |{\tt d}\zeta|}{ 2 \pi}.
\end{align*}
Comparison of this expression with \eqref{form1} and using that 
$|\zeta| = 1$ for $\zeta \in {\mathbb T}$ leads to the conclusion 
that indeed $K_{\cL(S)}$ is given by \eqref{RKoverlap} as claimed.
\end{proof}

\section{The de Branges-Rovnyak space $\cD(S)$}     \label{S:D(S)}

This section gives a brief discussion of the de Branges-Rovnyak 
space $\cD(S)$.  The first point of discussion is the RKHS point of 
view; there follows an elaboration of the additional insights coming from viewing $\cD(S)$ 
(or 
rather a certain minor modification) as a pullback space.

\subsection{$\cD(S)$ as a reproducing kernel Hilbert space}  
\label{S:D(S)RKHS}

Given a Schur-class function $S \in \cD(S)$, one can define a kernel 
$\widehat K_{S}$ as in \eqref{DS-ker}.  Unlike the case for $K_{S}$ 
(see the discussion around \eqref{contrac}), 
it is not immediately obvious why $\widehat K_{S}$ is a positive kernel.
One way to see this is as follows.  By the earlier discussion around 
\eqref{contrac}, one can use the connection with 
Toeplitz operators acting on $H^{2}$ spaces to see that $K_{S}$ is a 
positive kernel.  It is known that a Schur-class function $S$ has a {\em unitary} 
realization 
$$
  S(z) = D + z C (I - zA)^{-1} B \quad\text{with}\quad \bU = \begin{bmatrix} 
  A & B \\ C & D \end{bmatrix} \text{ unitary};
$$
one can cite the result of Sz.-Nagy-Foias \cite[Theorem VI.3.1]{NF} 
or adapt the lurking isometry argument to get a unitary realization 
\cite[Theorem 2.1]{Ball-Winnipeg}.  Once this is done, it is a direct 
calculation using the relations associated with $\bU$ being unitary 
to get the Kolmogorov decomposition for $\widehat K(z,w)$
\begin{equation}  \label{D(S)Kol}
  \widehat K_S(z,w) = \begin{bmatrix} C (I - zA)^{-1} \\ B^{*}(I - 
  zA^{*})^{-1} \end{bmatrix} 
  \begin{bmatrix} (I - \overline{w} A^{*})^{-1} C^{*} & (I - 
      \overline{w} A)^{-1} B \end{bmatrix}.
\end{equation}

{\em Proof of parts (2) \& (3), (4) in Theorem \ref{T:D(S)} based on 
part (1):}
Once it is known that $\widehat K_{S}$ is a positive kernel (in this case 
due to the Kolmogorov decomposition \eqref{D(S)Kol}), then it is also 
known that 
$\widehat K_{S}$ generates a RKHS $\cD(S) = \cH(\widehat K_{S})$ and hence has 
its canonical Kolmogorov decomposition
\begin{equation}   \label{canonicalD(S)Kol}
  \widehat K_{S}(z,w): = \begin{bmatrix} \frac{I - S(z) S(w)^{*}}{1 - 
  z \overline{w}}  & \frac{ S(z) - S(\overline{w})}{z - \overline{w}}  
  \\  \frac{\widetilde S(z) - \widetilde S(\overline{w})}{z - 
  \overline{w}} & \frac{I - \widetilde S(z) \widetilde 
  S(\overline{w})}{1 - z \overline{w}} \end{bmatrix} =
  \begin{bmatrix} \be_{1}(z) \\ \be_{2}(z) \end{bmatrix}
      \begin{bmatrix}  \be_{1}(w)^{*} & \be_{2}(w)^{*} \end{bmatrix}
\end{equation}
where 
$$
\be_{1}(z) \colon \begin{bmatrix} f \\ g \end{bmatrix} \mapsto f(z), 
\quad \be_{2}(z) \colon \begin{bmatrix} f \\ g \end{bmatrix} \mapsto 
g(z) 
$$
are the point-evaluation maps in the first and second components 
respectively.  A rearrangement of the matricial identity 
\eqref{canonicalD(S)Kol} leads to the system of equations
\begin{align*}
    z \overline{w} \be_{1}(z) \be_{1}(w)^{*} + I_{\cY} &
    = \be_{1}(z) \be_{1}(w)^{*} + S(z) S(w)^{*}, \\
    \overline{w} \be_{2}(z) \be_{1}(w)^{*} + \widetilde S(z) & = z \be_{2}(z) 
    \be_{1}(w)^{*} + S(w)^{*}, \\
  z \be_{1}(z) \be_{2}(w)^{*} + S(\overline{w}) & = \overline{w} 
  \be_{1}(z) \be_{2}(w)^{*}  + S(z), \\ 
 \be_{2}(z) \be_{2}(w)^{*} + S(\overline{z})^{*} S(\overline{w}) & = 
 z \overline{w} \be_{2}(z) \be_{2}(w)^{*} + I_{\cU}
\end{align*}
which is equivalent to the collection of inner-product identities
\begin{align*}
   & \langle \overline{w} \be_{1}(w)^{*} y, \overline{z} 
    \be_{1}(z)^{*} y' \rangle_{\cH(\widehat K_S)} + \langle y, y' 
    \rangle_{\cY} \\
    & \quad \quad =
\langle \be_{1}(w)^{*} y, \be_{1}(z)^{*} y' \rangle_{\cH(\widehat K_S)} +
\langle S(w)^{*} y, S(z)^{*} y' \rangle_{\cU}, \\
& \langle \overline{w} \be_{1}(w)^{*} y, \be_{2}(z)^{*} u' 
\rangle_{\cH(\widehat K_S)} + \langle \widetilde S(z) y, u' 
\rangle_{\cU} \\
& \qquad \qquad = \langle \be_{1}(w)^{*} y, \overline{z} \be_{2}(z)^{*} 
u' \rangle_{\cH(\widehat K_S)} + \langle S(w)^{*} y, u' \rangle_{\cU}, \\
& \langle \be_{2}(w)^{*} u, \overline{z} \be_{1}(z)^{*} y' 
\rangle_{\cH(\widehat K_S)}  + \langle S(\overline{w}) u, y' 
\rangle_{\cY}\\
& \quad \quad =\langle  \overline{w} \be_{2}(w)^{*} u, \be_{1}(z)^{*} y' 
\rangle_{\cH(\widehat K_S)} + \langle  u,S(z)^{*} y' \rangle_{\cU}, \\
& \langle \be_{2}(w)^{*} u, \be_{2}(z)^{*} u' \rangle_{\cH(\widehat 
K_S)} + \langle S(\overline{w}) u, S(\overline{z}) u' \rangle_{\cY} \\
& \quad \quad = \langle \overline{w} \be_{2}(w)^{*} u, \overline{z} 
\be_{2}(z)^{*} u' \rangle_{\cH(\widehat K_S)} + \langle u, u' 
\rangle_{\cU}.
\end{align*}
These in turn can be rearranged in aggregate form
\begin{align*}
& \left\langle \begin{bmatrix} \overline{w} \be_{1}(w)^{*} y + 
\be_{2}(w)^{*} u \\ y + S(\overline{w}) u \end{bmatrix},
\begin{bmatrix} \overline{z} \be_{1}(z)^{*} y' + \be_{2}(z)^{*} u' \\ 
   y' + S(\overline{z}) u' \end{bmatrix} 
   \right\rangle_{\cH(\widehat K_S) \oplus \cY}  \\
    & \quad =
    \left \langle \begin{bmatrix} \be_{1}(w)^{*} y + \overline{w} 
    \be_{2}(w)^{*} u \\ S(w)^{*} y + u \end{bmatrix},
 \begin{bmatrix} \be_{1}(w)^{*} y' + \overline{z} \be_{2}(z)^{*} u' \\
     S(z)^{*} y' + u' \end{bmatrix} \right\rangle_{\cH(\widehat K_S) 
     \oplus \cU}.
\end{align*}
Since these inner products match up, the map $V$ defined by
\begin{equation}  \label{defV'}
  V \colon \begin{bmatrix} \overline{w} \be_{1}(w)^{*} y + 
  \be_{2}(w)^{*} u \\ y + S(\overline{w}) u \end{bmatrix} \mapsto
  \begin{bmatrix} \be_{1}(w)^{*} y + \overline{w} \be_{2}(w)^{*} u \\
      S(w)^{*} y + u \end{bmatrix}
\end{equation}
extends by linearity and continuity to an isometry (also denoted 
by $V$) from 
$$
 \cD = \overline{\operatorname{span}}
 \left\{ \begin{bmatrix} \overline{w} \be_{1}(w)^{*} y + 
\be_{2}(w)^{*} u \\ y + S(\overline{w}) u \end{bmatrix} \colon
 u \in \cU, y \in \cY,  w \in {\mathbb D} \right\} \subset 
 \begin{bmatrix} \cH(\widehat K_S) \\ \cY \end{bmatrix}
$$
onto 
$$
  \cR: =\overline{\operatorname{span}}
  \left\{ \begin{bmatrix} \be_{1}(w)^{*} y + \overline{w} \be_{2}(w)^{*} u \\
      S(w)^{*} y + u \end{bmatrix} \colon u \in \cU, y \in \cY, w \in 
      {\mathbb D} \right\} \subset \begin{bmatrix} \cH(\widehat K_S) \\ 
      \cU \end{bmatrix}.
$$
By taking $y=0$ and $w = 0$ in the expression for the generic element 
of $\cD$, one can see that $\cD \supset \left[ \begin{smallmatrix} \{0\} 
\\ \cY \end{smallmatrix} \right]$.  As $u \in \cU$ and $y \in \cY$ 
are independent of each other, it follows that the projection down to the 
first component contains all the kernel functions 
$$
\widehat K(\cdot, 
w) \left[\begin{smallmatrix} y \\ u \end{smallmatrix} \right] = 
\be_{1}(w)^{*} y + \be_{2}(w)^{*} u,
$$
and hence $\cD$ in fact is all 
of $\cH(\widehat K) \oplus \cY$.  Similarly one can see that $\cR$ is 
all of $\cH(\widehat K) \oplus \cU$, and hence $V$ in fact is a 
unitary transformation from $\cH(\widehat K_S) \oplus \cY$ onto 
$\cH(\widehat K_S) \oplus \cU$. From \eqref{defV'} it follows that
\begin{equation}   \label{bUact1}
    \begin{bmatrix} A^{*} & C^{*} \\ B^{*} & D^{*} \end{bmatrix}
  \begin{bmatrix} \overline{w} \be_{1}(w)^{*} y + 
  \be_{2}(w)^{*} u \\ y + S(\overline{w}) u \end{bmatrix} =
  \begin{bmatrix} \be_{1}(w)^{*} y + \overline{w} \be_{2}(w)^{*} u \\
      S(w)^{*} y + u \end{bmatrix}.
\end{equation}
As $V$ is actually unitary, it also follows that
\begin{equation}   \label{bUact2}
    \begin{bmatrix} A & B \\ C & D \end{bmatrix}
	 \begin{bmatrix} \be_{1}(w)^{*} y + \overline{w} \be_{2}(w)^{*} u \\
      S(w)^{*} y + u \end{bmatrix} =
 \begin{bmatrix} \overline{w} \be_{1}(w)^{*} y + 
  \be_{2}(w)^{*} u \\ y + S(\overline{w}) u \end{bmatrix}.
\end{equation}
Upon setting $u = 0$ in \eqref{bUact1}, one arrives at
\begin{equation}   \label{bUact1'}
    \begin{bmatrix} A^{*} & C^{*} \\ B^{*} & D^{*} \end{bmatrix} 
	\begin{bmatrix} \overline{w} \be_{1}(w)^{*} y \\ y 
	\end{bmatrix} = \begin{bmatrix} \be_{1}(w)^{*} y \\ S(w)^{*} 
	y \end{bmatrix}.
\end{equation}
The next step is to proceed as was done in the proof of (1) $\Rightarrow$ (4) in 
Theorem \ref{T:H(S)} to get
$$
  \be_{1}(w)^{*} y = (I - \overline{w} A^{*})^{-1} C^{*} y
$$
from the first row of \eqref{bUact1} and then use this in the second 
row to get
$$
 \overline{w} B^{*} (I - \overline{w}A^{*})^{-1} C^{*} y + D^{*} y = 
 S(w)^{*} y.
$$
Then taking adjoints and setting $z = \overline{w}$ leads to the 
unitary realization for $S$:
\begin{equation} \label{realize}
    S(z) = D + z C (I - zA)^{-1} B.
\end{equation}
Alternatively, one may set $y=0$ in \eqref{bUact2} to get
\begin{equation}  \label{bUact2'}
    \begin{bmatrix} A & B \\ C & D \end{bmatrix} \begin{bmatrix} 
	\overline{w} \be_{2}(w)^{*} u \\ u \end{bmatrix} = 
	\begin{bmatrix} \be_{2}(w)^{*} u \\ S(\overline{w}) u 
	\end{bmatrix}.
\end{equation}
The first row can be solved for $\be_{2}(w)^{*}u$ to get
$$
\be_{2}(w)^{*} u = (I - \overline{w} A)^{-1} B u.
$$
From the second row it then follows that
$$
 \overline{w} C (I - \overline{w} A)^{-1} B u + D u = S(\overline{w}) 
 u.
 $$
 Letting $z = \overline{w} \in {\mathbb D}$ then again leads to 
 the realization \eqref{realize}.  As $V$ (and $V^{*}$) is unitary, 
 either way leads to a unitary realization \eqref{realunitary} for $S$.
 It remains to identify $V^{*} = \left[ \begin{smallmatrix} A & B \\ 
 C & D \end{smallmatrix} \right]$ with the model colligation 
 $\widehat \bU_{S} = \left[ \begin{smallmatrix} \widehat A_{S} & 
 \widehat B_{S} \\ \widehat C_{S} & \widehat D_{S} \end{smallmatrix} 
 \right]$ given by \eqref{hatbU}.
 
\smallskip

 From \eqref{bUact1'} with $w = 0$ it is seen that $C^{*} y = 
 \be_{1}(0)^{*}y$. A simple adjoint computation now gives
 $$
   C \colon \begin{bmatrix} f \\ g \end{bmatrix} \to f(0),
 $$
 so $C = \widehat C_{S}$.  With the action of $C^{*}$ identified, one 
 can use the first row of \eqref{bUact1'} to solve for $A^{*} 
 \be_{1}(w)^{*} y$:
 $$
  A^{*} \be_{1}(w)^{*} y = \frac{ \be_{1}(w)^{*} - 
  \be_{1}(0)^{*}}{\overline{w}} y.
 $$
 Then a simple adjoint computation
 $$
 \left\langle A \begin{bmatrix} f \\ g \end{bmatrix}, \be_{1}(w)^{*} 
 y \right\rangle_{\cH(\widehat K)} = \left\langle \begin{bmatrix} f 
 \\ g \end{bmatrix}, \frac{\be_{1}(w)^{*} - \be_{1}(0)^{*}}{\overline{w}} 
 y \right\rangle_{\cH(\widehat K)} = \left\langle \frac{f(w) - f(0)}{w}, y \right 
\rangle_{\cY}
 $$
 reveals what the action of $A$ is in the first component. From 
 \eqref{bUact1} with $y=0$ gives
 $$
 A^{*} \be_{2}(w)^{*} u + C^{*} S(\overline{w}) u = \overline{w} 
 \be_{2}(w)^{*} y
 $$
 or
 $$ A^{*} e_{2}(w)^{*} u  = \overline{w} \be_{2}(w)^{*} u - 
 \be_{1}(0)^{*} S(\overline{w}) u.
 $$
 Thus
 \begin{align*}
     \left\langle A \begin{bmatrix} f \\ g \end{bmatrix}, 
     \be_{2}(w)^{*} u \right\rangle_{\cH(\widehat K)} & =
     \left\langle \begin{bmatrix} f \\ g \end{bmatrix}, A^{*} 
     \be_{2}(w)^{*} u \right\rangle_{\cH(\widehat K)} \\
     & = \left\langle \begin{bmatrix} f \\ g \end{bmatrix}, 
     \overline{w} \be_{2}(w)^{*} u - \be_{1}(0)^{*} S(\overline{w}) u 
     \right\rangle_{\cH(\widehat K)} \\
     & = \langle w g(w) - S(\overline{w})^{*} f(0), u \rangle_{\cU}.
 \end{align*}
 This completes the confirmation that $A = \widehat R_{0}$.

\smallskip
 
 From \eqref{bUact2} with $y=0$ and $w=0$ one sees that 
 $$
 Bu = e_{2}(0)^{*} u = \begin{bmatrix} ([S(z) - S(0)]/z) u \\ 
 K_{\widetilde S}(z,w) u \end{bmatrix} = \widehat B_{S} u.
 $$
  Finally, setting $w=0$ in \eqref{bUact2'} gives $D=S(0)$.  
  This completes the verification that $V^{*} = \widehat R_{0}$.  The verification 
 that $V = (\widehat R_{0})^{*}$ is given by \eqref{hatRS*} is symmetric 
 (interchange the roles of the components $\left[ \begin{smallmatrix} 
 f \\ g \end{smallmatrix} \right]$ in an element of $\cD(S)$).  
 The norm identities \eqref{normid's} are simple consequences of $V$ being 
 unitary. This completes the  verification of (1) $\Rightarrow$ 
 (2) \& (3), (4) in Theorem \ref{T:D(S)}.
\qed

\smallskip 

The following verification of (1) $\Rightarrow$ (5) in Theorem 
\ref{T:D(S)} very much parallels the proof in Section \ref{S:RKHS} 
for the corresponding result in Theorem \ref{T:H(S)}.

\begin{proof}[Proof of (1) $\Rightarrow$ (5) in Theorem \ref{T:D(S)}]
    Let $A$ be a completely nonunitary contraction operator on the 
    Hilbert space $\cX$. Let $\bU = \left[ \begin{smallmatrix} A 
    & B \\ C & D \end{smallmatrix} \right]$ be a {\em Julia operator} 
    for $A$ (see \cite{DR}), also known as a Halmos dilation of $A$ (see 
    \cite{Halmos})); by this is meant that $B,C,D$ are constructed so 
    that $\bU$ is unitary with $B$ injective and with $C$ having 
    dense range.  The simplest choice of $B,C,D$ is to take $\cU = 
    \cD_{A^{*}} = \overline{\operatorname{Ran}} D_{A^{*}}$ where 
    $D_{A^{*}} = (I - A A^{*})^{\frac{1}{2}}$, $\cY = \cD_{A} = 
    \overline{\operatorname{Ran}} D_{A}$ where $D_{A} = (I - 
    A^{*}A)^{\frac{1}{2}}$ and set
    $$
    \bU =\left.  \begin{bmatrix} A & D_{A^{*}} \\ D_{A} & -A^{*} 
\end{bmatrix}\right|_{\cX \oplus \cD_{A^{*}}} \colon \begin{bmatrix} 
\cX \\ \cD_{A^{*}} \end{bmatrix} \to \begin{bmatrix} \cX \\ \cD_{A} 
\end{bmatrix}.
$$
The next step is to define a map $\Xi \colon \cX \to \left[ \begin{smallmatrix} 
H^{2}(\cY) \\ H^{2}(\cU) \end{smallmatrix} \right]$ by
$$
  \Xi \colon x \mapsto \begin{bmatrix} \cO_{C,A} x \\ 
  \cO_{B^{*}, A^{*}}x \end{bmatrix} = \begin{bmatrix} C (I - 
  zA)^{-1} x \\ B^{*} (I - zA^{*})^{-1} x \end{bmatrix}.
$$
The assumption that $A$ is completely nonunitary guarantees that 
$\Xi$ is injective.  Let $\cD_{0}$ be the range of the map $\Xi$ 
with the pullback norm:
$$
  \| \Xi x \|_{\cD_{0}} = \| x \|_{\cX}.
$$
Just as in the proof done in Section \ref{S:RKHS} for (1) 
$\Rightarrow$ (5) in Theorem \ref{T:H(S)}, one can verify that 
$\cD_{0}$ is a reproducing kernel Hilbert space with reproducing 
kernel $K_{C,A,B}$ given by
$$
   K_{C,A,B}(z,w) = \begin{bmatrix} C (I - zA)^{-1} \\ B^{*} (I - z 
   A^{*})^{-1} \end{bmatrix} \begin{bmatrix} I - \overline{w} 
   A^{*})^{-1} C^{*} & (I - \overline{w} A)^{-1} B \end{bmatrix}.
$$
Set $S(z) = D + z C (I - zA)^{-1} B$; then $S$ is in the Schur 
class $\cS(\cU, \cY)$ and, as was already observed above (see 
\eqref{D(S)Kol}), the fact that $\bU = \left[ \begin{smallmatrix} A & 
B \\ C & D \end{smallmatrix} \right]$ is unitary implies that 
$K_{C,A,B}(z,w) = \widehat K_{S}(z,w)$.  It is now a straightforward 
(if perhaps tedious) exercise to verify that $\widehat R_{0} \Xi = 
\Xi A$.  As $\Xi \colon \cX \to \cD_{0} = \cD(S)$ is unitary, it follows 
that $A$ is unitarily equivalent to the operator $\widehat 
R_{0}$ on $\cD(S)$.
\end{proof}

\begin{remark}  \label{R:NFcharfunc}  {\em
    Those familiar with the Sz.-Nagy--Foias model theory \cite{NF} 
    will  notice that the function $S$ produced in the preceding 
    proof is just the Sz.-Nagy--Foias characteristic function for the 
    operator $A^{*}$.
} \end{remark}

\subsection{$\cD(S)$ as a pullback space}  \label{D(S)pullback}

By definition,  the two-component de Branges-Rovnyak space $\cD(S)$ 
sits inside the direct sum Hardy space $H^{2}(\cY) \oplus 
H^{2}(\cU)$.  Nikolskii and Vasyunin (see \cite{NV1, NV2}) have 
introduced an adjusted version which sits inside $H^{2}(\cY)  \oplus 
\left(L^{2}(\cU) \ominus H^{2}(\cU)\right)$ which is a more natural object to look 
at in the context of Sz.-Nagy dilation theory and Lax-Phillips 
scattering as will be now described.

\smallskip

The adjustment is simple enough:  note that the map $j \colon 
f(\zeta) \mapsto \overline{\zeta} f(\overline{\zeta})$ is an involution on 
$L^{2}({\mathbb T}, \cU)$ which transforms $H^{2}(\cU)$ to 
$H^{2}(\cU)^{\perp}: = L^{2}(\cU) \ominus H^{2}(\cU)$ and vice versa. 
Use the notation $\widetilde D(S)$ for the flip of the space $\cD(S)$ defined by
$$
 \widetilde \cD(S) = \left\{ \begin{bmatrix} f \\ g \end{bmatrix} 
 \colon \begin{bmatrix} f \\ j ( g ) \end{bmatrix} \in \cD(S) \right\}.
$$
Then it is shown in \cite{NV1} that $\widetilde \cD(S)$ can be 
identified with the pullback space
$$ \widetilde \cD(S) = \left. \operatorname{Ran} \begin{bmatrix} 
I_{H^{2}(\cY)} & L_{S} \\ L_{S}^{*} & I_{H^{2}(\cU)^{\perp}} 
\end{bmatrix} \right|_{H^{2}(\cY) \oplus H^{2}(\cU)^{\perp}}
$$
with the pullback norm
$$
  \left \| \begin{bmatrix} I & S \\ S^{*} & I \end{bmatrix} 
  \begin{bmatrix} f \\ g \end{bmatrix}  \right\|_{\widetilde \cD(S)} 
      = \left\| \bQ  \begin{bmatrix} f \\ g \\ \end{bmatrix} 
      \right\|_{H^{2}(\cY) \oplus H^{2}(\cU)^{\perp}}, \text{ where }
      \bQ = P_{\left(\operatorname{Ker} \left[ \begin{smallmatrix} I & 
      L_{S} \\ L_{S}^{*} & I \end{smallmatrix} \right] 
      \right)^{\perp}},
$$
or, in the notation of Section \ref{S:pullback},
\begin{equation}   \label{tD(S)}
\widetilde \cD(S) = \cH^{p}_{\left. \left[ \begin{smallmatrix} I & L_{S} \\ 
L_{S}^{*} &  I \end{smallmatrix} \right]\right|_{H^{2}(\cY) \oplus 
H^{2}(\cU)^{\perp}}}.
\end{equation}

The utility of this formulation is that one can see better the 
unitary dilation space and the unitary dilation of the model operator 
$\widehat R_{0}$ as well as the associated Lax-Phillips scattering.
Recall that, for $T$ a contraction operator on $\cH$, by a theorem of 
Sz.-Nagy (see \cite{NF, Douglas74}), $T$ has a unitary dilation $U$, 
i.e., there is a unitary operator $U$ on a Hilbert space $\cK 
\supset \cH$ such that
$$
 T^{n} = P_{\cH} U^{n}|_{\cH} \quad\text{for}\quad n=0,1,2, \dots.
$$
By Sarason's lemma \cite{Sarason65}, the space $\cK$ has a  decomposition 
\begin{equation}   \label{scatdecom}
 \cK = \cG_{*} \oplus \cH \oplus \cG\quad\mbox{where}\quad 
U^{*} \cG_{*} \subset \cG_{*},\quad U \cG \subset \cG.
\end{equation}
When the unitary dilation $U$ is minimal, i.e., when $\cK = 
\overline{\operatorname{span}} \{ U^{n} \cH \colon n \in {\mathbb 
Z}\}$,   then necessarily $U^{*}|_{\cG_{*}}$ and $U|_{\cG}$ are pure 
isometries, and hence
$$
 \cG_{*} = \bigoplus_{n \ge 0} U^{*n}(\cG_{*} \ominus U^{*} \cG_{*}), 
 \quad  \cG = \bigoplus_{n \ge 0} U^{n}(\cG \ominus U \cG).
$$
When the completely nonunitary contraction operator $T$ is modeled 
as the model operator
$$
\left( \widetilde{\widehat R_{0}} \right)^{*} \colon \begin{bmatrix} f(\zeta) 
 \\ g(\zeta) \end{bmatrix} \mapsto \begin{bmatrix} \zeta f(\zeta) - 
S(\zeta) [g]_{-1} \\ \zeta g(\zeta) - [g]_{-1} \end{bmatrix}
$$
on $\widetilde \cD(S)$ (where now the functions in the model 
space are written as functions of the variable $\zeta$ on the circle ${\mathbb 
T}$), then it can be shown that the unitary 
dilation  for $T = \left( \widetilde{\widehat R_{0}} \right)^{*}$ can 
be modeled as the operator 
$$
 M_{\zeta} \colon \begin{bmatrix} f(\zeta) \\ g(\zeta) \end{bmatrix} 
 \mapsto \begin{bmatrix} \zeta f(\zeta) \\ \zeta g(\zeta) 
\end{bmatrix}
$$
on the pullback space
\begin{equation}   \label{tK(S)}
  \widetilde \cK(S): = \cH^{p}_{ \left[ \begin{smallmatrix} I & L_{S} 
  \\ L_{S}^{*} & I \end{smallmatrix} \right]}
\end{equation}
where the operator $\left[ \begin{smallmatrix} I & 
L_{S} \\ L_{S}^{*} & I \end{smallmatrix} \right] $ is now viewed as acting of 
$L^{2}(\cY) \oplus L^{2}(\cU)$.  The decomposition \eqref{scatdecom}
is valid with $\cK = \widetilde \cK(S)$ as in \eqref{tK(S)}, $\cH = 
\widetilde \cD(S)$ as in \eqref{tD(S)}, and with the incoming space 
$\cG_{*} = \widetilde \cG_{*}(S)$ and the outgoing space $\cG = 
\widetilde \cG(S)$ given respectively by
\begin{equation}   \label{in/out}
  \widetilde \cG_{*}(S) = \cH^{p}_{\left. \left[ \begin{smallmatrix} I \\ 
  L_{S}^{*} \end{smallmatrix} \right] \right| H^{2}(\cY)^{\perp}}\quad\mbox{and}\quad
  \quad\widetilde \cG(S) = \cH^{p}_{\left. \left[ \begin{smallmatrix} 
  L_{S} \\ I \end{smallmatrix} \right] \right| H^{2}(\cY)}.
\end{equation}

In the Nikolskii-Vasyunin terminology, there is a coordinate-free 
formulation of the model for a completely nonunitary contraction and 
the associated unitary dilation, and this de Branges-Rovnyak 
formulation is but one of three possible transcriptions, the others 
being the Sz.-Nagy-Foias and the Pavlov transcriptions. A thorough 
extension of all these ideas to multievolution scattering systems and 
Schur-class functions on the polydisk (rather than on the unit disk), 
where the realization theory has man more subtleties and 
complications, is carried out in \cite{BabyBear}.

\subsection{Two-component overlapping spaces: factorization and 
invariant subspaces}   The following enhanced generalization 
of Proposition \ref{T:overlapping-RKHS} is  relevant to these issues.

\begin{theorem}  \label{T:overlapping-2comp}  (See \cite{dB70, 
    Ball-Memoir, BC}).  Let $S_{1} \in 
    \cS)(\cU_{0}, \cU)$ and $S_{2} \in \cS(\cU, \cY)$ be two 
    Schur-class functions for which the product $S = S_{2} \cdot 
    S_{1} \in \cS(\cU_{0}, \cY)$ is defined.  Then the map
    $$
   Z \colon \begin{bmatrix} f_{2} \\ g_{2} \end{bmatrix} \oplus 
	\begin{bmatrix} f_{1} \\ g_{1} \end{bmatrix} \mapsto 
	    \begin{bmatrix} f_{2}(z) + S_{2}(z) f_{1}(z) \\ 
		\widetilde S_{1}(z) 
		g_{2}(z) + g_{1}(z) \end{bmatrix}
$$
is a coisometry from $\cD(S_{2}) \bigoplus \cD(S_{1})$  onto 
$\cD(S_{2} S_{1})$.  If one defines the two-component
overlapping space 
$\cE(S_{2} \cdot S_{1})$ by
$$
\cE(S_{2} \cdot S_{1}) = \left\{ \begin{bmatrix} f \\ g \end{bmatrix} 
\in H^{2}(\cU) \colon \begin{bmatrix} S_{2} f \\ - g \end{bmatrix} 
\in \cD(S_{2}) \text{ and } \begin{bmatrix} -f \\ \widetilde S_{1} g 
\end{bmatrix} \in \cD(S_{1}) \right\}
$$
with norm
$$
  \left\| \begin{bmatrix} f \\ g \end{bmatrix} \right\|^{2} = \left\| 
  \begin{bmatrix} S_{2} f \\ - g \end{bmatrix} \right\|^{2} + \left\| 
      \begin{bmatrix} - f \\ \widetilde S_{1} g \end{bmatrix} 
	  \right\|^{2},
$$
then the map 
$$  \chi \colon \begin{bmatrix} f \\ g \end{bmatrix} \mapsto 
\begin{bmatrix} S_{2} f \\ - g \end{bmatrix} \oplus  \begin{bmatrix} 
  -f \\  \widetilde S_{1} g \end{bmatrix}
$$
is a unitary embedding of $\cE(S_{2} \cdot S_{1})$ into 
$\operatorname{Ker} Z$.
Furthermore, $\cE(S_{2} \cdot S_{1})$ is RKHS
with reproducing kernel $K_{S_{2} \cdot S_{1}}$ given by
$$
K_{S_{2} \cdot S_{1}}(z,w)  = \begin{bmatrix} \frac{1}{2} (1 - z 
\overline{w})^{-1} (\varphi(z) + \varphi(w)^{*}) & \frac{1}{2} (z - 
\overline{w})^{-1} (\varphi(z) - \varphi(\overline{w}))  \\
\frac{1}{2} (z - \overline{w})^{-1} (\widetilde \varphi(z) - 
\widetilde \varphi(\overline{w}) & \frac{1}{2} (1 - z 
\overline{w})^{-1} (\widetilde \varphi(z) + \widetilde 
\varphi(w)^{*})  \end{bmatrix}
$$
where in this case $\varphi$ is given by
$$
  \varphi(z) = \int_{{\mathbb T}} \frac{ \zeta + z} {\zeta - z} 
 \,  \Omega(\zeta) \, \frac{| {\tt d} \zeta|}{2 \pi}
$$
and $\Omega(\zeta)$ is a certain positive semidefinite operator on 
${\mathbb T}$ determined by the two pointwise defect operators
$\Omega_{2}(\zeta): =  I - S_{2}(\zeta)^{*} S_{2}(\zeta) $ and 
$\Omega_{1}(\zeta) : = I - S_{1}(\zeta) S_{1}(\zeta)^{*}$; 
when $\Omega_{2}(\zeta)$ and $\Omega_{1}(\zeta)$ are both invertible, 
then $\Omega(\zeta)$ is determined from the identity
\begin{equation}   \label{dBparallelsum}
  \Omega(\zeta)^{-1} = \Omega_{2}(\zeta)^{-1} + 
  \Omega_{1}(\zeta)^{-1} - I.
\end{equation}
\end{theorem}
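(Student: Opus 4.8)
The plan is to run the same colligation/observability argument that was used for Theorem~\ref{T:D(S)}(5), but starting from the \emph{cascade} of the two unitary model colligations rather than from a Julia operator. Write $\cU_{0}$, $\cU$, $\cY$ for the coefficient spaces (so $S_{1}\in\cS(\cU_{0},\cU)$, $S_{2}\in\cS(\cU,\cY)$, $S=S_{2}S_{1}\in\cS(\cU_{0},\cY)$), let $\widehat\bU_{S_{1}}$, $\widehat\bU_{S_{2}}$ be the unitary model colligations \eqref{hatbU} on $\cD(S_{1})$, $\cD(S_{2})$, and form the cascade
$$\bU=\sbm{\widehat A_{S_{2}} & \widehat B_{S_{2}}\widehat C_{S_{1}} & \widehat B_{S_{2}}\widehat D_{S_{1}}\\ 0 & \widehat A_{S_{1}} & \widehat B_{S_{1}}\\ \widehat C_{S_{2}} & \widehat D_{S_{2}}\widehat C_{S_{1}} & \widehat D_{S_{2}}\widehat D_{S_{1}}}$$
on $\sbm{\cD(S_{2})\oplus\cD(S_{1})\\ \cU_{0}}$. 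This $\bU$ is again unitary (a cascade of unitary colligations) and its characteristic function is $S_{2}S_{1}=S$. Applying the observability construction of the proof of Theorem~\ref{T:D(S)}(5) to $\bU$ produces $\Xi\colon\cD(S_{2})\oplus\cD(S_{1})\to H^{2}(\cY)\oplus H^{2}(\cU_{0})$, $x\mapsto\sbm{\widehat C_{\cX}(I-z\widehat A_{\cX})^{-1}x\\ \widehat B_{\cX}^{*}(I-z\widehat A_{\cX}^{*})^{-1}x}$, whose range carries the pull‑back norm; since $\bU$ is unitary with characteristic function $S$, the Kolmogorov decomposition \eqref{D(S)Kol} identifies the reproducing kernel of $\operatorname{Ran}\Xi$ with $\widehat K_{S}$, so $\operatorname{Ran}\Xi=\cD(S_{2}S_{1})$ isometrically and $\Xi$ is a coisometry onto $\cD(S_{2}S_{1})$. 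The one computational point here is to unwind $(I-z\widehat A_{\cX})^{-1}$ for the upper‑triangular cascade generator: using that $\cO_{\widehat C_{S_{j}},\widehat A_{S_{j}}}$ (resp.\ $\cO_{\widehat B_{S_{j}}^{*},\widehat A_{S_{j}}^{*}}$) is just the first‑ (resp.\ second‑) component projection $\sbm{f\\g}\mapsto f$ (resp.\ $\mapsto g$) on $\cD(S_{j})$, and that the cross‑feed $z\widehat C_{S_{2}}(I-z\widehat A_{S_{2}})^{-1}\widehat B_{S_{2}}$ realizes multiplication by $S_{2}(z)-S_{2}(0)$, one finds $\Xi\bigl(\sbm{f_{2}\\g_{2}}\oplus\sbm{f_{1}\\g_{1}}\bigr)=\sbm{f_{2}+S_{2}f_{1}\\ \widetilde S_{1}g_{2}+g_{1}}$, i.e.\ $\Xi=Z$. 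Hence $Z$ is a coisometry of $\cD(S_{2})\oplus\cD(S_{1})$ onto $\cD(S_{2}S_{1})$. (Alternatively, after the flip of Section~\ref{D(S)pullback} one may use $L_{S}=L_{S_{2}}L_{S_{1}}$, the defect chain rule $I-L_{S_{2}}L_{S_{1}}L_{S_{1}}^{*}L_{S_{2}}^{*}=(I-L_{S_{2}}L_{S_{2}}^{*})+L_{S_{2}}(I-L_{S_{1}}L_{S_{1}}^{*})L_{S_{2}}^{*}$ and its $2\times2$‑block analogue for $M_{S_{2}S_{1}}$ to realize $Z$ as a composition of coisometries of the type \eqref{fact}.)

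Next I would identify $\operatorname{Ker}Z$. From the formula, $\sbm{f_{2}\\g_{2}}\oplus\sbm{f_{1}\\g_{1}}\in\operatorname{Ker}Z$ forces $f_{2}=-S_{2}f_{1}$ and $g_{1}=-\widetilde S_{1}g_{2}$; setting $f=-f_{1}$, $g=-g_{2}$ one checks $f,g\in H^{2}(\cU)$, that $\sbm{S_{2}f\\-g}=\sbm{f_{2}\\g_{2}}\in\cD(S_{2})$ and $\sbm{-f\\\widetilde S_{1}g}=\sbm{f_{1}\\g_{1}}\in\cD(S_{1})$, hence $\sbm{f\\g}\in\cE(S_{2}\cdot S_{1})$ and $\chi\sbm{f\\g}$ returns the given element. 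Conversely $\chi$ visibly maps $\cE(S_{2}\cdot S_{1})$ into $\operatorname{Ker}Z$, it is injective (its second component $-f$ already recovers $f$), and the definition of the $\cE$‑norm as $\|\sbm{S_{2}f\\-g}\|^{2}_{\cD(S_{2})}+\|\sbm{-f\\\widetilde S_{1}g}\|^{2}_{\cD(S_{1})}$ makes $\chi$ norm‑preserving. Thus $\chi$ is a unitary map of $\cE(S_{2}\cdot S_{1})$ onto $\operatorname{Ker}Z$, and the $\widehat\Xi$‑type bookkeeping of Proposition~\ref{P:cLT} then packages $Z$ and $\chi$ into the asserted unitary onto $\cD(S_{2}S_{1})\oplus\cE(S_{2}\cdot S_{1})$.

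For the reproducing kernel I would argue as in the proof of Theorem~\ref{T:overlapping-RKHS}. Transporting $\cE(S_{2}\cdot S_{1})$ through $\chi$ and the chain‑rule factorization exhibits it as isometrically a lifted‑norm space $\cH^{l}_{P}$ inside $H^{2}(\cU)\oplus H^{2}(\cU)$, with $P$ a positive contraction assembled from $I-L_{S_{2}}^{*}L_{S_{2}}$ and $I-L_{S_{1}}L_{S_{1}}^{*}$ --- the two‑component analogue of the identity $\cL_{T}=\cH^{l}_{I-T^{*}T}$ of Proposition~\ref{P:cLT}. As in Theorem~\ref{T:overlapping-RKHS}, the reproducing kernel of $\cH^{l}_{P}$ is then $K_{S_{2}\cdot S_{1}}(\cdot,w)\sbm{y\\u}=P\bigl(k_{\rm Sz}(\cdot,w)\sbm{y\\u}\bigr)$; pairing with a second Szeg\H{o} kernel converts this into a boundary integral over ${\mathbb T}$, and comparison with \eqref{form1}--\eqref{varphi} identifies it with the stated $2\times2$ kernel built from the Carath\'eodory function $\varphi(z)=\int_{{\mathbb T}}\frac{\zeta+z}{\zeta-z}\,\Omega(\zeta)\,\frac{|{\tt d}\zeta|}{2\pi}$. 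The main obstacle --- and the only place where serious work remains --- is the identification of $\Omega(\zeta)$: the pointwise minimization hidden in the sum of the two $\cD$‑norms defining $\|\cdot\|_{\cE}$ is a \emph{shifted parallel sum} of the defect operators $\Omega_{2}(\zeta)=I-S_{2}(\zeta)^{*}S_{2}(\zeta)$ and $\Omega_{1}(\zeta)=I-S_{1}(\zeta)S_{1}(\zeta)^{*}$, the shift by $I$ being the algebraic residue of the $H^{2}$‑constraints common to the two $\cD$‑norms; a Schur‑complement computation then collapses it to $\Omega(\zeta)^{-1}=\Omega_{2}(\zeta)^{-1}+\Omega_{1}(\zeta)^{-1}-I$ whenever both $\Omega_{j}(\zeta)$ are invertible, which is \eqref{dBparallelsum}, while the general (possibly singular) case is handled by the usual limiting argument for parallel sums rather than by inverses.
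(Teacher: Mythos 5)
First, a point of comparison: the paper does not actually prove Theorem~\ref{T:overlapping-2comp}; it is stated with attributions to \cite{dB70, Ball-Memoir, BC} and no argument is given, so your proposal has to be measured against those sources rather than against an in-text proof. Your strategy for the first two assertions is exactly the standard one from that literature and it works. The cascade of the two unitary model colligations \eqref{hatbU} is again unitary with characteristic function $S_{2}S_{1}$, the unwinding of $(I-z\widehat A)^{-1}$ for the upper-triangular cascade generator does produce $\Xi=Z$ (I checked the resolvent computation: the cross term collapses to $S_{2}(z)\widehat C_{S_{1}}(I-z\widehat A_{S_{1}})^{-1}$ in the first row and to $\widetilde S_{1}(z)\widehat B_{S_{2}}^{*}(I-z\widehat A_{S_{2}}^{*})^{-1}$ in the second), and the identification of $\operatorname{Ran}\Xi$ with $\cD(S_{2}S_{1})$ via the Kolmogorov decomposition \eqref{D(S)Kol} is legitimate even though the cascade state operator need not be completely nonunitary --- the pull-back construction only needs $\Xi$ to be a coisometry onto its range, not injective. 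Your treatment of $\operatorname{Ker}Z$ is also correct and in fact yields more than the theorem asserts, namely that $\chi$ is unitary \emph{onto} $\operatorname{Ker}Z$, which is what makes the $\widehat\Xi$-type packaging of Proposition~\ref{P:cLT} into a unitary $\cD(S_{2})\oplus\cD(S_{1})\to\cD(S_{2}S_{1})\oplus\cE(S_{2}\cdot S_{1})$ go through.

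The genuine gap is in the last third, and you have flagged it yourself: the reproducing-kernel formula and, above all, the identification of $\Omega(\zeta)$ through \eqref{dBparallelsum} are asserted but not derived. Two specific things are missing. First, exhibiting $\cE(S_{2}\cdot S_{1})$ as a lifted-norm space $\cH^{l}_{P}$ with an explicit $P$ built from $I-L_{S_{2}}^{*}L_{S_{2}}$ and $I-L_{S_{1}}L_{S_{1}}^{*}$ is not a formal transport of structure: unlike the one-component case of Proposition~\ref{P:cLT}, the two $\cD$-norms each involve a limit of the quantities \eqref{Nn}, and the analogue of the isometric identity \eqref{isomiden} has to be proved, not quoted. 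Second, the phrase ``a Schur-complement computation then collapses it to $\Omega^{-1}=\Omega_{2}^{-1}+\Omega_{1}^{-1}-I$'' hides the real work: the minimizations implicit in the two $\cD$-norms range over $H^{2}$ functions, not over arbitrary $L^{2}$ functions, so passing from those infima to a pointwise a.e.\ statement about $\Omega(\zeta)$ on ${\mathbb T}$ requires a density/maximal-factorable-minorant argument of the kind used around \eqref{NVconclusion2}, and the singular case requires the limiting definition of the (shifted) parallel sum together with the range identity \eqref{Ranintersect}. This is precisely the content of \cite{Ball-Memoir} (see also \cite{NF74} for the regularity criterion), and as written your proposal establishes the coisometry and overlapping-space statements but only sketches a plausible route to the kernel formula.
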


\begin{remark} \label{R:invsubspaces} {\em  
    It turns out that Theorem \ref{T:overlapping-2comp} 
    is connected with factorization and invariant subspaces. 
    A sketch of the explanation with getting into precise details is 
    as follows.
    As explained in Brodskii \cite{Brod78} and Ball-Cohen 
    \cite{BC}, invariant subspaces for the state operator $\widehat 
    R_{0}^{*}$ on $\cD(S)$ are determined by nontrivial {\em  
    regular} factorizations $S = S_{2} \cdot S_{1}$ of the 
    characteristic function $S$.  The factorization $S = S_{2} \cdot 
    S_{1}$ being regular corresponds to the overlapping 
    space $\cE(S_{2} \cdot S_{1})$ being trivial, or to $Z$ being 
    unitary from $\cD(S_{2}) \oplus \cD(S_{1})$ to $\cD(S)$ (see 
    \cite{Ball-Memoir} and \cite[Section 7]{BC}), or to $Z$ being 
    unitary from $\cD(S_{2}) \oplus \cD(S_{1})$ to $\cD(S)$.  Tracing 
    through the form of the operator $\widehat R_{0}^{*}$ on 
    $\cD(S)$ in the alternative decomposition $\cD(S_{2}) \oplus 
    \cD(S_{1})$ obtained by applying $Z^{*}$ to $\cD(S)$,  one can 
    see that $0 \oplus \cD(S_{1})$ is an invariant subspace of 
    $\widehat R_{0}^{*}$; by the general theory of cascade 
    decompositions of unitary colligations (see \cite{Brod78} and 
    \cite{BC}), every invariant subspace 
    for $\widehat R_{0}^{*}$ arises in this way.  The problem of finding 
    nontrivial invariant subspaces for a completely nonunitary 
    contraction operator is therefore transferred to the problem of 
    finding nontrivial regular factorizations for characteristic 
    operator functions; as these Schur-class functions in general act 
    between infinite-dimensional coefficient Hilbert spaces, this 
    problem in turn is tractable only with additional assumptions.  
    it is a curious fact, nonetheless, that even when the 
    factorization is not regular, one still gets an invariant 
    subspace, but for $\widehat R_{0}^{*} \oplus U$ on $\cD(S) \oplus 
    \cE(S_{2} \cdot S_{1})$ rather than for $\widehat R_{0}^{*}$ 
    itself; here $U$ is the unitary operator on $\cE(S_{2} \cdot 
    S_{1})$ given by
    $$
      U \colon \begin{bmatrix} f(z) \\ g(z) \end{bmatrix} \mapsto 
      \begin{bmatrix}  z f(z) - g(0) \\ [g(z) - g(0)]/z \end{bmatrix}.
$$
This result was also obtained independently in the 
setting of the Sz.-Nagy-Foias model (see \cite[Notes to Chapter VII]{NF}).
While this phenomenon appears to be disappointing from the point of 
view of searching for invariant subspaces, it is exactly the tool 
used in \cite{Ball-Memoir} to obtain the spectral invariants for the 
unitary part of a whole class of contractive integral operators 
defined on a vector-valued $L^{2}$-space on the unit circle (see also 
\cite{Kriete} for a real-line version). 

\smallskip

Finally, it turns out that  the operator $\Omega(\zeta)$ appearing 
in Theorem \ref{T:overlapping-2comp} satisfies
\begin{equation}  \label{Ranintersect}
    \operatorname{Ran} \Omega(\zeta)^{1/2} = \operatorname{Ran} 
    \Omega_{2}(\zeta)^{1/2} \cap \operatorname{Ran} 
    \Omega_{1}(\zeta)^{1/2}.
\end{equation}
Thus the operator $\Omega(\zeta)$ is related to but not quite the 
same as the parallel sum of $\Omega_{1}(\zeta)$ and 
$\Omega_{2}(\zeta)$ studied by Fillmore and Williams \cite{FW} with 
motivation from circuit theory; the parallel sum studied in \cite{FW}
also satisfies the range-intersection property \eqref{Ranintersect} 
but is determined in simple cases by the parallel-sum identity
\begin{equation}  \label{parallelsum}
    \Omega(\zeta)^{-1} = \Omega_{2}(\zeta)^{-1} + 
    \Omega_{1}(\zeta)^{-1}
\end{equation}
rather than by the Brangesian parallel-sum identity 
\eqref{dBparallelsum}.  Nevertheless, a consequence of the range 
intersection property \eqref{Ranintersect} is that the overlapping 
space $\cE(S_{2} \cdot S_{1})$ is trivial, i.e., the factorization $S 
= S_{2} \cdot S_{1}$ is regular, if and only if 
$$
\operatorname{Ran} \Omega_{2}(\zeta)^{1/2} \cap \operatorname{Ran} 
\Omega_{1}(\zeta)^{1/2} = \{0\} \text{ for a.e. } \zeta \in {\mathbb 
T}
$$
(see \cite{Ball-Memoir}).  An independent direct proof for this 
factorization-regularity criterion was given in the setting of the 
Sz.-Nagy-Foias model theory in \cite{NF74}.
} \end{remark}

\section{Generalizations and extensions}

\subsection{Canonical de Branges-Rovnyak functional-model spaces:
multivariable settings} \label{S:multivariable}

Realization  of a Schur-class function as the transfer function of a
canonical functional-model colligation having additional metric
properties (e.g., coisometric, isometric, or unitary), i.e., item (4)
in Theorems \ref{T:H(S)} an \ref{T:D(S)}, has been extended to
settings where the unit disk playing the role of the underlying
domain is replaced by a more
general domain $\cD$ in ${\mathbb C}^{d}$; see \cite{bbBall} for the case of the unit ball
${\mathbb B}^{d}$ in ${\mathbb C}^{d}$,  \cite{bbPoly} for the case
of the unit polydisk ${\mathbb D}^{d}$, \cite{bbGen} for the case of
a general domain with matrix polynomial defining function, and
\cite{bb} for an overview of all three settings.  In these
multivariable settings, the backward shift operator $R_{0}$ is
replaced by a solution of the Gleason problem; an early manifestation 
of this idea is in \cite{ADR-IEOT}.  For the case where
the origin is in the domain $\cD \subset {\mathbb C}^{d}$, the Gleason
problem (centered at $0$) can be formulated as:  given $f$ in our
space of functions $\cF$ on $\cD$, find $f_{1}, \dots, f_{d}$ also in
$\cF$ so that $f(z)  = f(0) + \sum_{k=1}^{d} z_{k}
f_{k}(z)$.    As the solution of such a Gleason problem is often not
unique, one speaks about many de Branges-Rovnyak spaces $\cH(S)$ or
$\cD(S)$ associated with a given function $f$ in the generalized
Schur-class over the domain $\cD$; as long as certain minimal
structural components are maintained, all these are called canonical
functional models going with the same $S$.  One then gets good
uniqueness results in the following sense:  any other
transfer-function realization satisfying certain
observability/controllability and weak metric properties is
unitarily equivalent to some functional-model transfer-function
realization.  

\smallskip

There has also been work on extending the functional-model aspect
(item (5) in Theorems \ref{T:H(S)} and \ref{T:D(S)}), at least in the
ball setting, where a commutative row contraction , i.e., a
commutative $d$-tuple of operators $T_{1}, \dots, T_{d}$ on a Hilbert   
space $\cH$ for which the block row $\begin{bmatrix} T_{1} & \cdots &
T_{d} \end{bmatrix} \colon \cH^{d} \to \cH$ is contractive, replaces
a single contraction operator $T$ (see \cite{BES, bbBall}).  There has
also been extensive work on noncommutative versions (models for a not
necessarily commutative operator $d$-tuple with block-row matrix $\begin{bmatrix}
T_{1} & \cdots & T_{d} \end{bmatrix}$ contractive---see
\cite{Bunce, Frazho, Popescu1,  Popescu2, Popescu3,
Cuntz-scat, BBF3}) which then leads into noncommutative function
theory.  For lack of space,  these matters are not dealt with in any 
detail here. 

\subsection{Extensions to Kre\u{\i}n space settings}
Much of the theory of de Branges-Rovnyak spaces given in Sections
\ref{S:dBRspaces} and \ref{S:D(S)} actually extends to Pontryagin and
Kre\u{\i}n-space settings, where  Hilbert spaces coming up in
various places are allowed to be Kre\u{\i}n spaces (i.e., the space
is a direct sum of a Hilbert space and an anti-Hilbert space), or at
least Pontryagin spaces (where the anti-Hilbert space is finite
dimensional).  In particular, the paper of de Branges
\cite{deB88} provides a nice extension of the theory of complementary
spaces developed in Section \ref{S:dBRspaces} above to the
Kre\u{\i}n-space setting; the book
\cite{ADRdS}, besides reviewing these matters, also develops the whole realization
theorem (item (4) in Theorems \ref{T:H(S)} and \ref{T:D(S)}) to the
Pontryagin-space setting (see also \cite{DR}).  One place   where these generalizations 
are
relevant is in the proof of the Bieberbach conjecture.  Certain
relevant inequalities  involve contraction operators on a
Pontryagin function space, involving   substitution (or composition)
contraction operators $T$ rather than multiplication contraction operators $T = T_{S}$
associated with a Schur function $S$ (see \cite{dB84, dB85}).

\smallskip

The Pontryagin-space formulation of the  Nikolskii-Vasyunin
model space $\widetilde \cD(S)$ in terms of Kre\u{\i}n-Langer
representations was given in \cite{Der1, Der2} as a necessary step
to formulate a general interpolation problem for generalized Schur functions.

\section{Concluding remarks}
The preceding sections give an overview of the basic properties of 
de Branges Rovnyak spaces along with  their
applications to related function theory and operator theory problems.
It is worth noting that the theory and applications are still 
evolving, as illustrated by the following examples.

\subsection{Still other settings}
Whenever one has a substitute for the Hardy space $H^{2}(\cY)$ and
of the Schur class $\cS(\cU, \cY)$, possibly in its role as the
multipliers from $H^{2}(\cU)$ to $H^{2}(\cY)$, one has a notion of de
Branges-Rovnyak space $\cH(S)$.  An easier first case is the case
where $S$ is inner, so that $\cH(S) = H^{2}(\cY) \ominus S \cdot
H^{2}(\cU)$ is just a Hardy-space orthogonal difference.  Just as in 
the multivariable context mentioned in Section \ref{S:multivariable} 
above, one of the issues often is to find the appropriate substitute
or analog for the difference-quotient transformation $R_{0} \colon f(z) \mapsto
[f(z) - f(0)]/z$.  These ideas have been explored at least in a preliminary way in
the following situations:
\begin{enumerate}
    \item \textbf{Quaternionic settings:}   Two distinct  flavors of this topic are slice 
hyperholomorphic functions \cite{acs}, and  Fueter-regular functions \cite{asv}.

\item \textbf{Riemann-surface settings:}  See \cite{alvi, BV-Bordeaux}.
\item \textbf{Subbergman spaces:} See \cite{zhu1, zhu2} and \cite{bblerer} for
the treatment of an interpolation problem in sub-Bergman spaces.

\item \textbf{de Branges-Rovnyak spaces over a half plane:} The paper
of Ball-Kurula-Staffans-Zwart \cite{BKSZ} extends the canonical de
Branges-Rovn\-yak func\-tional-model colligation to the right half plane
setting and thereby gets canonical-model energy-preserving and
co-energy-preserving system realizations for Schur-class functions
over the right half plane.
Fricain and Mashreghi \cite{FM-Fourier} studied the boundary behavior of derivatives of
functions in a de Branges-Rovnyak space over the upper half plane.
\end{enumerate}

\subsection{Special questions}  Researchers have used de
Branges-Rovnyak spaces as a tool to treat
various types of special questions.  Examples are:
\begin{enumerate}
    \item \textbf{Riesz bases of reproducing kernels:} Given $S\in\cS(\cU,\cY)$,
$\{z_n\}\subset \D$ and $\{y_n\}\subset
\cY$, find a criterion for $\{K_S(\cdot,z_n)u_n\}$ to a Riesz basis
for $\cH(S)$. A reference for this topic is \cite{cft}.

\item \textbf{Multiplication by finite Blaschke products:} On which spaces algebraically 
included in
$H^2$ does multiplication by a finite  Blaschke product act as an
isometry? See \cite{singh}.
\end{enumerate}

\subsection{Applications}
De Branges-Rovnyak spaces appear naturally in the context
of Schur-class interpolation theory; the prominent role 
played by these spaces in interpolation theory is discussed in detail in 
separate survey \cite{bbsur}. Besides the $H^{\infty}$-interpolation theory
de Branges-Rovnyak spaces have also appeared as a useful tool in a
number of other applications.
\begin{enumerate}
    \item \textbf{Multipliers of de Branges-Rovnyak spaces:}  
    References include \cite{crof, davmcc, lotto1, lottosar1, lottosar2,
    lottosar3, suar}.
Interpolation with operator argument is embedded into the scheme
of \cite{bol}. All solutions are characterized in terms of  positive 
kernels but there is no more detailed parametrization of the solution
set at this level of generality.  Some attempts to get realizations for contractive 
multipliers
were done in \cite{albol}.

\item \textbf{Norms of weighted composition operators:}   See \cite{jury}.

\item \textbf{Relative angular derivatives:} See  \cite{shap1,shap2}
\end{enumerate}

\end{document}